\DeclareMathAlphabet{\mathcal}{OMS}{cmsy}{m}{n}
\title{
    {\bf Bilevel Mixed-Integer Linear Program\\with Binary Tender} 
    \author{
        \normalsize {\bf Bo Zhou}, {\bf Ruiwei Jiang}, and {\bf Siqian Shen} \\
        \small Department of Industrial and Operations Engineering\\
        \small University of Michigan, Ann Arbor, MI 48109\\
        \small Email: \{bozum, ruiwei, siqian\}@umich.edu\\
    }
}
\date{}
\begin{document}

    \maketitle

    \begin{abstract}
        \noindent
        Bilevel programs model sequential decision interactions between two sets of players and find wide applications in real-world complex systems.
        In this paper, we consider a bilevel mixed-integer linear program with binary tender, wherein the upper and lower levels are linked via binary decision variables and both levels may involve additional mixed-integer decisions.
        We recast this bilevel program as a single-level formulation through a value function for the lower-level problem and then propose valid inequalities to replace and iteratively approximate the value function.
        We first derive a family of Lagrangian-based valid inequalities that give a complete description of the value function, providing a baseline method to obtain exact solutions for the considered class of bilevel programs.
        To enhance the strength of this approach, we further investigate another two types of valid inequalities.
        First, when the lower-level value function has intrinsic special properties such as supermodularity or submodularity, we exploit such properties to separate the Lagrangian-based inequalities quickly.
        Second, we derive decision rule-based valid inequalities, where linear decision rules and learning techniques are explored respectively.
        We demonstrate the effectiveness and efficiency of the proposed methods in extensive numerical experiments, including instances of general bilevel mixed-integer programs and those of a facility location interdiction problem.

        ~

        \noindent
        \textbf{Keywords}: Bilevel mixed-integer programming, valid inequality, Lagrangian, submodularity, supermodularity, decision rule
    \end{abstract}

    \section{Introduction}\label{sec:intro}
        In a generic bilevel program (BP), a leader and a follower solve their own decision-making problems in an interactive way:
        the leader's decision made in the upper level will affect the follower's problem solved at the lower level (e.g., the leader's decision is involved in the objective function and/or constraints of the follower's problem) and vice versa.
        We denote by vectors $x$ and $y$ the leader's and the follower's decisions, respectively, $n_{x}$ and $n_{y}$ as their dimensions, respectively, and $m_{u}$ and $m_{\ell}$ as the number of upper-level and lower-level constraints, respectively.
        Then, a bilevel mixed-integer linear program (MILP) can be described formally as
        \begin{subequations}\label{eq:bilevel}
            \begin{align}
                \min_{x\in X,y\in Y}~&c_{u}^{\top}x+d_{u}^{\top}y\\
                \textrm{s.t.}~&A_{u}x+B_{u}y\leq h_{u}\label{eq:bilevel2}\\
                &y\in\arg\max_{y'\in Y}~d_{\ell}^{\top}y'\label{eq:bilevel3}\\
                &~~~~~~~~~~~~\textrm{s.t.}~A_{\ell}x+B_{\ell}y'\leq h_{\ell}, \label{eq:bilevel4}
            \end{align}
        \end{subequations}
        where $x\in X\subseteq\mathbb{R}^{n_{x}}$ and $y,y'\in Y\subseteq\mathbb{R}^{n_{y}}$;
        $c_{u}\in\mathbb{R}^{n_{x}}$, $d_{u}\in\mathbb{R}^{n_{y}}$, $A_{u}\in\mathbb{R}^{m_{u}\times n_{x}}$, $B_{u}\in\mathbb{R}^{m_{u}\times n_{y}}$, and $h_{u}\in\mathbb{R}^{m_{u}}$ are upper-level coefficients;
        and $d_{\ell}\in\mathbb{R}^{n_{y}}$, $A_{\ell}\in\mathbb{R}^{m_{\ell}\times n_{x}}$, $B_{\ell}\in\mathbb{R}^{m_{\ell}\times n_{y}}$, and $h_{\ell}\in\mathbb{R}^{m_{\ell}}$ are lower-level coefficients.
        For the lower-level problem, without loss of generality, we assume that $x$ only appears in the feasible region, because the lower-level objective function can be relegated to the constraints through its hypographical form.
        Meanwhile, the leader's objective function depends on both the leader's decision $x$ and the follower's \emph{optimal} decision $y$, which is in turn a function of $x$ defined through~\eqref{eq:bilevel3}--\eqref{eq:bilevel4}.
        Additionally, the upper-level feasible region may also depend on $y$ as in constraint~\eqref{eq:bilevel2}.
        We note that there may exist multiple $y$'s that are (equally) optimal to the lower-level problem and we consider the \emph{optimistic} case in this paper, which means that the follower adopts the $y$ best complying with the leader's objective.
        There have also been related studies on pessimistic cases of BPs with continuous variables and linear constraints, and we refer the interested readers to \cite{wiesemann2013pessimistic,liu2018pessimistic} for more details.

        A general approach to solving the BP in model~\eqref{eq:bilevel} is the value function reformulation (VFR), which incorporates the follower's decision and lower-level feasible region into the upper-level problem, giving rise to an equivalent, single-level reformulation
        \begin{subequations}\label{eq:VFR}
            \begin{align}
                \min_{x\in X,y\in Y}&~c_{u}^{\top}x+d_{u}^{\top}y\label{eq:upper objective}\\
                \textrm{s.t.}&~A_{u}x+B_{u}y\leq h_{u}\\
                &~A_{\ell}x+B_{\ell}y\leq h_{\ell}\label{eq:lower feasibility}\\
                &~d_{\ell}^{\top}y\geq\phi(x),\label{eq:optimality}
            \end{align}
        \end{subequations}
        where the value function $\phi:X\rightarrow\mathbb{R}$ is defined as
        \begin{equation}
            \begin{aligned}\label{eq:value function}
                \phi(x):=\max_{y'\in Y}&~d_{\ell}^{\top}y'\\
                \textrm{s.t.}&~B_{\ell}y'\leq h_{\ell}-A_{\ell}x,
            \end{aligned}
        \end{equation}
        representing the optimal objective value of the lower-level problem as a function of $x$.
        In the VFR~\eqref{eq:VFR}, constraint \eqref{eq:optimality} designates the optimality of $y$ for the lower-level problem and is also known as bilevel feasibility.
        By relaxing \eqref{eq:optimality} from \eqref{eq:VFR}, we obtain the so-called high-point relaxation (HPR) \cite{kleinert2021survey}, which provides a lower bound for the original BP~\eqref{eq:bilevel}.
        In this paper, we focus on bilevel MILPs with binary tender, which we next formalize.
        \begin{assumption}\label{asp:binary}
            The upper-level decision variables that appear in the formulation at the lower level are binary-valued.
        \end{assumption}
        Assumption \ref{asp:binary} arises in various applications of BP in the real world, including energy system expansion planning~\cite{KabirifarFotuhi-Firuzabad-3072}, charging station planning~\cite{li2022public}, competitive facility location~\cite{qi2021sequential}, and network interdiction~\cite{smith2020survey}.
        This assumption is mild because an integer tender variable can be exactly represented by (logarithmically many) binary variables and a continuous tender variable can be approximated to arbitrary accuracy by binary variables.
        Note that the problems at both levels may involve general decision variables (i.e., continuous and/or integer), and we only assume that the entries of $x$ appearing in the lower-level formulation are binary-valued.
        For ease of exposition, we shall assume that $x\in X=\{0,1\}^{n_{x}}$ in the remainder of the paper, but it is clear that the very same approach that we develop later works when $x$ is mixed-integer but satisfies Assumption \ref{asp:binary}.

        \subsection{Literature Review}
            BPs are appealing for modeling problems that involve sequential decision interactions from two or multiple players.
            Their hierarchical decision processes arise in a wide range of real-world applications, including energy \cite{ZhouFang-4358}, security \cite{moug2025stochastic}, transportation \cite{song2016risk, lei2018stochastic}, market design~\cite{nasiri2020bi}, and machine learning \cite{pmlr-v139-wang21ad, hong2020two, bao2021stability}.
            However, it has been proved that even the simplest BP, of which both levels are linear programs, is NP-hard and computationally intractable \cite{jeroslow1985polynomial,hansen1992new}.
            Existing algorithms for BPs depend on the properties of the lower-level problem, as well as the linking variables between the two levels~\cite{colson2007overview,sinha2017review}.
            In case a BP is continuous (i.e., both $x$ and $y$ consist of continuous decision variables only), descent methods based on explicit gradients or implicit gradients become applicable and can find a local optimal solution to the BP \cite{liu2021towards, jiang2023conditional}.
            Yet, descent methods do not guarantee global optimality, and additional assumptions are usually required for their convergence, such as lower-level convexity and singleton (i.e., given $x$, there is a unique $y$ optimal to the lower-level problem).
            In case the lower-level problem is continuous and convex, constraint~\eqref{eq:optimality} can be explicitly replaced by complementarity constraints~\cite{Fortuny81} or bilinear constraints~\cite{zare2019note,mccormick1976computability} through the KKT conditions of the lower-level problem or strong duality, respectively.
            Consequently, solving BPs boils down to solving the ensuing single-level but nonconvex and nonlinear reformulation \cite{colson2005bilevel,Bard98}.
            However, such luxury is immediately lost if the lower-level problem involves discrete decision variables (e.g., $y$ is binary or mixed-binary) or nonconvexity, where neither KKT conditions nor strong duality approaches may be able to capture the (parametric) global optimality at the lower level.
            In that case, the closed-form expression of $\phi(x)$ is either non-existent or highly intractable, prohibiting solving the BP effectively.
            Therefore, more effort is required for more general BPs with discrete decision variables or nonconvex objectives/constraints at the lower level.

            To address this issue, integer programming techniques \cite{kleinert2021survey, smith2020survey, beck2022survey} are usually employed and can be categorized into three streams using (i) value function representation, (ii) cutting planes, or (iii) branch-and-bound algorithm.
            The first stream (i) aims at obtaining a closed-form expression of the lower-level value function $\phi(x)$, through methods such as the multi-parametric representation theory \cite{avraamidou2019multi}, the properties of bilevel integer program value functions \cite{zhang2021bilevel}, iterative approximation \cite{dempe2017solving}, or neural network approximation \cite{zhoulearning}.
            Yet, such methods are limited to their special problem structures or can only provide bound information.
            The second stream (ii) constructs cutting planes to iteratively approximate the optimality condition \eqref{eq:optimality}.
            Most algorithms of this stream are based on the projection of the HPR feasible region on $y$ and need to introduce indicator constraints \cite{mitsos2010global, lozano2017value}.
            For example, \cite{zeng2014solving} proposes to adopt column-and-constraint generation to generate proper $y$ and the corresponding cutting planes.
            Building on this idea, \cite{yue2019projection, merkert2022exact} focus on how to properly handle the indicator constraint under special problem structures.
            Different from the projection approach, \cite{caramia2015enhanced} proposes a valid inequality to cut off bilevel infeasible points (pairs of $x$ and $y$ that violate constraint \eqref{eq:optimality}), which yet introduces an additional lower-level problem for each bilevel infeasible point identified.
            The third stream (iii) stems from \cite{bard1992algorithm}, which adapts the classic branch-and-bound algorithm to BPs with discrete lower-level variables.
            Along this stream, different types of valid inequalities have been developed to improve the computational efficiency, including the integer no-good cut \cite{denegre2011interdiction}, multi-disjunction cut \cite{wang2017watermelon}, intersection cut \cite{fischetti2018use, fischetti2017new}, and disjunctive cut \cite{gaar2023socp, horlander2024using}.
            In particular, \cite{tahernejad2020branch} develops the \texttt{MibS} solver for general bilevel MILPs, which incorporates most of the above cuts and some cuts for specially structured problems.
            Nevertheless, most of the above methods are dedicated to BPs with general mixed-integer or integer linking variables.
            To the best of our knowledge, although a few methods (e.g., \cite{dempe2017solving}) restrict to binary tender, they do not fully exploit this information.
            In this paper, we will take advantage of the ``binary tender'' assumption and show that this enables novel valid inequalities for solving BPs more efficiently.

        \subsection{Contributions and Paper Organization}
            We develop exact algorithms to solve bilevel MILPs with binary tender, in which novel valid inequalities are developed for higher computational efficiency.
            Our main contributions include:
            \begin{enumerate}
                \item We derive a family of Lagrangian-based valid inequalities that give a complete description of the optimality condition \eqref{eq:optimality}, providing a baseline method to obtain exact solutions under general conditions.
                    We provide both exact and quick calculation of Lagrangian coefficients.
                \item For special cases that the value function $\phi(x)$ has intrinsic special properties such as supermodularity or submodularity, we exploit such properties to efficiently calculate the exact Lagrangian coefficients or produce stronger valid inequalities.
                    We further extend to quasi-supermodular or quasi-submodular cases, wherein these properties hold only after fixing the discrete decision variables at the lower level.
                \item We propose additional decision rule-based valid inequalities, which arise from solving the lower-level problem approximately using decision rules.
                    We explore two different decision rules: i) iteratively updated linear decision rules and ii) trained nonlinear decision rules from past solves of the lower-level problem.
                \item We conduct extensive numerical experiments using test instances for general bilevel MILPs and a facility location interdiction problem to demonstrate the effectiveness and performance of our proposed methods.
            \end{enumerate}

            The remainder of the paper is organized as follows.
            Sections \ref{sec:Lagrangian}--\ref{sec:DR} propose the Lagrangian-based, special property-based, and decision rule-based valid inequalities, respectively.
            Section \ref{sec:results} conducts numerical experiments and Section \ref{sec:conclusions} draws conclusions.

            \emph{Notations:}
            For integers $n$, we define $[n]:=\{1,2,\ldots,n\}$.
            For $a\in\mathbb{R}$, we define $a^{+}:=\max\{a,0\}$ and $a^{-}:=\min\{a,0\}$.
            For $a', a'' \in\mathbb{R}^{n}$, we define $a' \vee a'' = [\max\{ a'_1, a''_1\}, \ldots, \max\{a'_n, a''_n\}]^{\top}$ and $a' \wedge a'' = [\min\{a'_1, a''_1\}, \ldots, \min\{a'_n, a''_n\}]^{\top}$.
            For $a\in\mathbb{R}^{n}$ and $i\in[n]$, $a_{i}$ means the $i$th entry of $a$ and $a_{-i}\in\mathbb{R}^{n-1}$ denotes the vector obtained by removing $a_{i}$ from $a$.
            For $i\in\mathbb{Z}_{+}$, $e_{i}$ denotes a vector with suitable dimension, with entry $i$ being 1 and all other entries being 0.

    \section{Lagrangian-based Valid Inequalities}\label{sec:Lagrangian}
        The main difficulty in exactly solving the reformulation \eqref{eq:VFR} lies in constraint \eqref{eq:optimality}.
        In this section, we develop a family of Lagrangian-based valid inequalities to represent \eqref{eq:optimality}.
        The main idea is to rewrite the value function \eqref{eq:value function} as
        \begin{equation}\label{eq:copy}
            \begin{aligned}
                \phi(x)=\max_{z\in[0,1]^{n_{x}}}&~\psi(z)\\
                \text{s.t.}&~z=x,
            \end{aligned}
        \end{equation}
        where $\psi(z)$ with $z\in[0,1]^{n_{x}}$ is an extension of $\phi(x)$ with $x\in\{0,1\}^{n_{x}}$, that is, \(\psi(z) = \phi(z)\) whenever \(z \in\{0,1\}^{n_{x}}\). This extension enables Lagrangian relaxation and the subsequent construction of three types of valid inequalities.

        There are multiple ways to extend from $\phi(x)$ with $x\in\{0,1\}^{n_{x}}$ to $\psi(z)$ with $z\in[0,1]^{n_{x}}$.
        For the ease of calculating Lagrangian coefficients, we consider extensions with such structure that, with other entries fixed, $\psi(z)$ is linear in each individual $z_{i}$ for each $i\in[n_{x}]$.
        For example, we can set
        \begin{align*}
            \psi(z):=\sum_{x\in\{0,1\}^{n_{x}}}\phi(x)\prod_{i\in[n_{x}]:x_{i}=1}z_{i}\prod_{i\in[n_{x}]:x_{i}=0}(1-z_{i}).
        \end{align*}

         \subsection{Formulations and Valid Inequalities}
                We define the validity and tightness of inequalities as follows.
                \begin{definition}\label{def:tight}
                    Consider two inequalities $f(x)\leq0$ and $g(x)\leq0$ with respect to decision variables \(x \in X\). Then,\\
                    (i) we say $f(x)\leq0$ is valid for inequality $g(x)\leq0$ if $f(x)\leq g(x)$ for all $x\in X$;\\
                    (ii) we say $f(x)\leq0$ is valid and tight for inequality $g(x)\leq0$ if $f(x)\leq g(x)$ for all $x\in X$ and there exists an $\hat{x}\in X$ such that $g(\hat{x})=f(\hat{x})$. In particular, we say \(f(x) \leq 0\) is tight at \(\hat{x}\).
                \end{definition}
                In the following, we exploit Lagrangian duality to derive valid and tight inequalities for \eqref{eq:optimality}.

            \subsubsection{Penalty-based Valid Inequality}
                We first consider the penalty method and design a Lagrangian function by relaxing $z=x$ as
                \begin{equation}\label{eq:penalty}
                    L_{p}(x, \rho)=\max_{z\in[0,1]^{n_{x}}}~\psi(z)-\rho\left(1^{\top}x+1^{\top}z-2x^{\top}z\right),
                \end{equation}
                where $\rho\in\mathbb{R}_{+}$ is a penalty coefficient.
                For any $x\in\{0,1\}^{n_{x}}$, $z=x$ is feasible to the right-hand side of \eqref{eq:penalty} and hence $\phi(x)\leq L_{p}(x,\rho)$, implying the weak duality $\phi(x)\leq\min_{\rho\in\mathbb{R}_{+}}L_{p}(x,\rho)$.
                In fact, because $x$ is binary-valued, Lemma~\ref{lem:penalty} below shows that the strong duality holds. We note that a similar result has been shown in Theorem 3 of \cite{zou2019stochastic}.
                Before presenting this result, we define the projection of the feasible region defined by the lower-level constraints onto $x$ as
                \begin{equation}\label{eq:lower-level feasibility}
                    X_{LF}:=\left\{x\in\{0,1\}^{n_{x}}\left|~\exists y'\in Y, B_{\ell}y'\leq h_{\ell}-A_{\ell}x\right.\right\}.
                \end{equation}
                \begin{lemma}
                \label{lem:penalty}
                    For any $x\in\{0,1\}^{n_{x}}$, we have
                    \begin{equation}\label{eq:penalty-strong}
                        \phi(x)=\min_{\rho\in\mathbb{R}_{+}}L_{p}(x,\rho).
                    \end{equation}
                    In addition, if $x\in X_{LF}$, then there exists a $\rho^{*}(x)\in\mathbb{R}_{+}$ such that for all $\rho\geq\rho^{*}(x)$, we have $\phi(x)=L_{p}(x,\rho)$.
                \end{lemma}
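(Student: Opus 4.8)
The plan is to combine the weak duality already recorded above with an exact-penalty argument; the heart of the matter is to show that, for binary $x$ and sufficiently large $\rho$, the maximization defining $L_{p}(x,\rho)$ is solved by $z=x$ itself. First I would isolate the decisive structural fact that, with $x\in\{0,1\}^{n_{x}}$ fixed, the penalty term
\[
1^{\top}x+1^{\top}z-2x^{\top}z=\sum_{i\in[n_{x}]}\bigl(x_{i}+(1-2x_{i})z_{i}\bigr)
\]
is \emph{affine} in $z$ over the whole cube $[0,1]^{n_{x}}$ (each summand is manifestly affine in $z_{i}$, because the kink of $|x_{i}-z_{i}|$ sits at the boundary value $z_{i}=x_{i}$ and is never crossed inside $[0,1]$). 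Since the extension $\psi$ is, by construction, linear in each coordinate, the objective $\psi(z)-\rho(1^{\top}x+1^{\top}z-2x^{\top}z)$ is then multilinear in $z$.

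Next I would invoke the standard fact that a multilinear function attains its maximum over $[0,1]^{n_{x}}$ at a vertex: fixing all but one coordinate leaves an affine function of that coordinate, whose maximum over $[0,1]$ is attained at an endpoint, and iterating drives every coordinate into $\{0,1\}$. Evaluating the objective at a binary vertex $v$, where $\psi(v)=\phi(v)$ and $1^{\top}x+1^{\top}v-2x^{\top}v=\sum_{i}\mathbbm{1}[x_{i}\neq v_{i}]=:\delta(x,v)$ counts the coordinates in which $x$ and $v$ disagree, this reduction yields the clean identity
\[
L_{p}(x,\rho)=\max_{v\in\{0,1\}^{n_{x}}}\bigl(\phi(v)-\rho\,\delta(x,v)\bigr),
\]
which is the key technical step on which everything else rests.

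From this identity both claims fall out. For the exact-penalty part, suppose $x\in X_{LF}$, so that $\phi(x)$ is finite, and set $\rho^{*}(x):=\max_{v\in X_{LF}\setminus\{x\}}\bigl(\phi(v)-\phi(x)\bigr)^{+}/\delta(x,v)$, which is finite because $\{0,1\}^{n_{x}}$ is a finite set and is nonnegative being a maximum of nonnegative ratios (or $0$), hence admissible as a coefficient in $\mathbb{R}_{+}$. Then for every $\rho\geq\rho^{*}(x)$ and every $v\neq x$ one has $\phi(v)-\rho\,\delta(x,v)\leq\phi(x)$ (the infeasible $v\notin X_{LF}$ contribute $-\infty$ and are harmless), while $v=x$ attains $\phi(x)$; hence $L_{p}(x,\rho)=\phi(x)$. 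For the strong-duality part, if $x\in X_{LF}$ the exact-penalty conclusion together with weak duality gives $\phi(x)\leq\min_{\rho\geq0}L_{p}(x,\rho)\leq L_{p}(x,\rho^{*}(x))=\phi(x)$; if instead $x\notin X_{LF}$ then $\phi(x)=-\infty$, and since every vertex $v$ in the identity has either $\delta(x,v)\geq1$ or $\phi(v)=-\infty$, letting $\rho\to\infty$ forces $L_{p}(x,\rho)\to-\infty$, so $\inf_{\rho\geq0}L_{p}(x,\rho)=-\infty=\phi(x)$ as well.

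I expect the main obstacle to be making the reduction identity fully rigorous once some lower-level problems are infeasible, i.e.\ when $\phi$ (and hence $\psi$) takes the value $-\infty$ at certain corners: the coordinate-wise ``maximum at a vertex'' argument must be phrased so that it tolerates $-\infty$ corner values, and one must verify that the multilinear extension is interpreted consistently there. This same phenomenon is precisely what explains the hypothesis $x\in X_{LF}$ in the second statement: when $\phi(x)=-\infty$ the infimum over $\rho$ is approached only in the limit $\rho\to\infty$ and is not attained at any finite penalty coefficient, so exact penalization genuinely requires lower-level feasibility of $x$.
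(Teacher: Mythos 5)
Your proof is correct, but it takes a genuinely different route from the paper's. The paper proves the lemma by a direct case analysis on $x\in X_{LF}$ versus $x\notin X_{LF}$, and in the feasible case establishes the existence of $\rho^{*}(x)$ by contradiction: it assumes the optimal $z^{*}(x,\rho)$ has a strictly positive penalty term for arbitrarily large $\rho$ and argues $L_{p}(x,\rho)\to-\infty$. As written, that limiting argument is informal --- over the continuous cube the penalty value of the maximizer could shrink to zero as $\rho$ grows, so $\rho$ times the penalty need not blow up --- and it is only repaired by the vertex-optimality fact that the paper defers to Lemma \ref{lem:solution}. You instead put that fact first: you observe the objective is coordinate-wise affine, push the maximizer to a binary vertex, and obtain the closed-form identity $L_{p}(x,\rho)=\max_{v\in\{0,1\}^{n_{x}}}\bigl(\phi(v)-\rho\,\delta(x,v)\bigr)$ with $\delta$ the Hamming distance. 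This quantizes the penalty term to $\{0,1,2,\dots\}$, makes both the strong-duality and exact-penalty claims immediate, and yields an \emph{explicit} finite threshold $\rho^{*}(x)$ (a finite-difference quotient of $\phi$ values), which the paper only obtains later and less directly in Proposition \ref{pps:selection}. What the paper's route buys is that it avoids committing to the reduction identity in the presence of infeasible corners; what yours buys is rigor at the one step where the paper is loose, plus a constructive coefficient. Your flagged caveat about $-\infty$ values of $\psi$ at infeasible vertices is real --- the displayed multilinear extension is not literally well-defined there --- but the paper suffers from the same imprecision, and your treatment of those vertices as contributing $-\infty$ to a finite max is the consistent reading.
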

                \begin{proof}\color{black}
                    We consider two cases according to the feasibility of $x$.

                    Case 1: If $x\notin X_{LF}$,
                        we have $\phi(x)=-\infty$ according to \eqref{eq:value function}, and hence, $L_{p}(x, \rho)\geq-\infty$ due to the weak duality.
                        We further consider two cases.\\
                        Case 1.1: If $L_{p}(x, \rho)=-\infty$,
                            which means $\psi(z)$ is infeasible for all $z\in[0,1]^{n_{x}}$, we have
                            \begin{equation*}
                                \min_{\rho\in\mathbb{R}_{+}}L_{p}(x,\rho)=-\infty=\phi(x).
                            \end{equation*}
                        Case 1.2: If $L_{p}(x, \rho)>-\infty$,
                            supposing that $z^*$ is the optimal solution to the right-hand solution of \eqref{eq:penalty}, we have
                            \begin{equation*}
                                L_{p}(x,\rho)=\psi(z^*)-\rho\left(1^{\top}x+1^{\top}z^*-2x^{\top}z^*\right).
                            \end{equation*}
                            Considering that $x$ is infeasible to $\phi$, $x$ is also infeasible to $\psi$.
                            While $z^*$ is feasible to $\psi$, $z^*\neq x$ must hold, which implies that
                            \begin{equation*}
                                1^{\top}x+1^{\top}z^*-2x^{\top}z^*\geq\|x-z^*\|^2_2>0.
                            \end{equation*}
                            With the bounded $\psi(z^*)$, when $\rho\rightarrow+\infty$, we have $L_{p}(x,\rho)\rightarrow-\infty$.
                            Hence, we have
                            \begin{equation*}
                                \min_{\rho\in\mathbb{R}_{+}}L_{p}(x,\rho)=-\infty=\phi(x).
                            \end{equation*}

                    Case 2: If $x\in X_{LF}$,
                        we have $\phi(x)>-\infty$ according to \eqref{eq:value function}, and hence, $L_{p}(x, \rho)>-\infty$ due to the weak duality.
                        Supposing that $z^*(x,\rho)$ is the optimal solution to the right-hand solution of \eqref{eq:penalty}, we have
                        \begin{equation*}
                            L_{p}(x,\rho)=\psi(z^*(x,\rho))-\rho\left(1^{\top}x+1^{\top}z^*(x,\rho)-2x^{\top}z^*(x,\rho)\right),
                        \end{equation*}
                        where
                        \begin{equation*}
                            1^{\top}x+1^{\top}z^*(x,\rho)-2x^{\top}z^*(x,\rho)\geq\|x-z^*(x,\rho)\|^2_2\geq0.
                        \end{equation*}
                        First, we prove that there exists $\rho^{*}\in\mathbb{R}_{+}$ such that for all $\rho\geq\rho^{*}$, $1^{\top}x+1^{\top}z^{*}(x,\rho)-2x^{\top}z^{*}(x,\rho)=0$.
                        Suppose the contrary that for all $\rho^{*}\in\mathbb{R}_{+}$, there exists $\rho\geq\rho^{*}$ such that $1^{\top}x+1^{\top}z^{*}(x,\rho)-2x^{\top}z^{*}(x,\rho)>0$.
                        When $\rho^{*}\rightarrow+\infty$, we have $\rho=+\infty$ and then $L_{p}(x, \rho)=-\infty$, which contradicts with $L_{p}(x, \rho)>-\infty$.
                        Hence, we finish this part of the proof.
                        Considering that the given $x$ may influence the value of $\rho^{*}$, we use $\rho^{*}(x)$ in the following for clarity.\\
                        Second, because for all $\rho\geq\rho^{*}(x)$, $1^{\top}x+1^{\top}z^{*}(x,\rho)-2x^{\top}z^{*}(x,\rho)=0$ holds, we have $z^{*}(x,\rho)=x$, which implies $L_{p}(x,\rho)=\psi(x)=\phi(x)$.
                        Furthermore, we have
                        \begin{equation*}
                            \min_{\rho\in\mathbb{R}_{+}}L_{p}(x,\rho)\leq\min_{\rho\geq\rho^{*}(x)}L_{p}(x,\rho)=\phi(x).
                        \end{equation*}
                        Combining the weak duality that $\phi(x)\leq\min_{\rho\in\mathbb{R}_{+}}L_{p}(x,\rho)$, we have $\phi(x)=\min_{\rho\in\mathbb{R}_{+}}L_{p}(x,\rho)$.
                        This completes the overall proof.
                \end{proof}

                According to Lemma \ref{lem:penalty}, whenever $x$ is feasible to $\phi$ and $\rho$ is sufficiently large, $L_{p}(x,\rho)=\phi(x)$ holds.
                Then, we can derive the following corollary.
                \begin{corollary}\label{crl:penalty}
                    There exists a constant $\hat{\rho}\in\mathbb{R}_{+}$ such that for any $\rho\geq\hat{\rho}$ and $x\in\{0,1\}^{n_{x}}\cap X_{LF}$, we have
                    \begin{equation}\label{eq:penalty-strong2}
                        \phi(x)=L_{p}(x,\rho).
                    \end{equation}
                \end{corollary}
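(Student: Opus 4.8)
The plan is to exploit the finiteness of the binary domain to turn the pointwise threshold supplied by Lemma~\ref{lem:penalty} into a single uniform constant. By that lemma, for each $x \in \{0,1\}^{n_{x}} \cap X_{LF}$ there exists a threshold $\rho^{*}(x) \in \mathbb{R}_{+}$ such that $\phi(x) = L_{p}(x,\rho)$ holds for all $\rho \geq \rho^{*}(x)$. The only gap between this and the corollary is that the threshold is permitted to depend on $x$; what I need is a bound valid simultaneously for every feasible $x$.

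First I would observe that the index set $\{0,1\}^{n_{x}} \cap X_{LF}$ is finite, being a subset of the $2^{n_{x}}$ binary vectors. This allows me to define
\begin{equation*}
    \hat{\rho} := \max_{x \in \{0,1\}^{n_{x}} \cap X_{LF}} \rho^{*}(x),
\end{equation*}
a maximum taken over a finite collection and hence a well-defined, finite, nonnegative constant that depends on neither a particular $x$ nor $\rho$. (If the set is empty the statement is vacuous and any choice of $\hat{\rho}$ works.)

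Finally, I would invoke Lemma~\ref{lem:penalty} pointwise. For any $\rho \geq \hat{\rho}$ and any $x \in \{0,1\}^{n_{x}} \cap X_{LF}$, the construction of $\hat{\rho}$ yields $\rho \geq \hat{\rho} \geq \rho^{*}(x)$, so the lemma gives $\phi(x) = L_{p}(x,\rho)$, which is exactly~\eqref{eq:penalty-strong2}.

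I do not expect a genuine obstacle here, since the whole argument is the reduction from a family of pointwise thresholds to one global threshold, made possible by the finiteness of the binary cube. The only point requiring care is ensuring that $\hat{\rho}$ is independent of both $x$ and $\rho$; the max-over-a-finite-set construction delivers this immediately, and no continuity or compactness argument beyond finiteness is needed.
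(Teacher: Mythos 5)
Your proposal is correct and follows essentially the same route as the paper: take the pointwise thresholds $\rho^{*}(x)$ from Lemma~\ref{lem:penalty}, set $\hat{\rho}:=\max_{x\in\{0,1\}^{n_{x}}\cap X_{LF}}\rho^{*}(x)$ over the finite feasible set, and apply the lemma pointwise. Your added remarks on finiteness guaranteeing well-definedness and on the vacuous empty case are fine but not needed beyond what the paper states.
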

                \begin{proof}\color{black}
                    According to Lemma \ref{lem:penalty}, for any $x\in\{0,1\}^{n_{x}}\cap X_{LF}$, there exists $\rho^{*}(x)\in\mathbb{R}_{+}$ such that for all $\rho\geq\rho^{*}(x)$, we have $\phi(x)=L_{p}(x,\rho)$.
                    We define $\hat{\rho}:=\max_{x\in\{0,1\}^{n_{x}}\cap X_{LF}}\rho^{*}(x)$.
                    Then, for any $x\in\{0,1\}^{n_{x}}\cap X_{LF}$ and any $\rho\geq\hat{\rho}$, we have $\rho\geq\rho^{*}(x)$ and thus $\phi(x)=L_{p}(x,\rho)$.
                \end{proof}

                Based on Corollary \ref{crl:penalty}, we derive a penalty-based valid inequality in Proposition \ref{pps:penalty-cut}.
                \begin{proposition}\label{pps:penalty-cut}
                    For any $z\in[0,1]^{n_{x}}$, the following inequality is valid for any $(x,y) \in X \times Y$ satisfying \eqref{eq:optimality}:
                    \begin{equation}\label{eq:penalty-cut}
                        d_{\ell}^{\top}y\geq \psi(z)-\hat{\rho}\left(1^{\top}x+1^{\top}z-2x^{\top}z\right),
                    \end{equation}
                    where $\hat{\rho}$ is a sufficiently large constant.
                    In addition, if $z\in\{0,1\}^{n_{x}}$, then \eqref{eq:penalty-cut} is valid and tight for \eqref{eq:optimality} and can be rewritten as
                    \begin{equation}\label{eq:penalty-cut2}
                        d_{\ell}^{\top}y\geq \phi(z)-\hat{\rho}\left(1^{\top}x+1^{\top}z-2x^{\top}z\right).
                    \end{equation}
                \end{proposition}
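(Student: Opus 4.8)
The plan is to obtain both claims as direct consequences of Corollary~\ref{crl:penalty}, which furnishes a single constant $\hat\rho$ making the penalized Lagrangian exact simultaneously for every lower-feasible binary point. The heavy lifting has already been done in Lemma~\ref{lem:penalty} and its corollary, so this proposition is essentially a clean specialization of that exactness.

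First I would establish validity. Fix any $z\in[0,1]^{n_{x}}$ and take an arbitrary $(x,y)\in X\times Y$ that satisfies \eqref{eq:optimality}. Since any such pair is a candidate solution of the VFR, it also obeys the lower-feasibility constraint \eqref{eq:lower feasibility}, so $x\in\{0,1\}^{n_{x}}\cap X_{LF}$ and $\phi(x)$ is finite. Corollary~\ref{crl:penalty} then gives $\phi(x)=L_{p}(x,\hat\rho)=\max_{z'\in[0,1]^{n_{x}}}\big[\psi(z')-\hat\rho(1^{\top}x+1^{\top}z'-2x^{\top}z')\big]$. Because the fixed $z$ is feasible for this inner maximization, the optimal value dominates the value attained at $z$, yielding $\phi(x)\ge\psi(z)-\hat\rho(1^{\top}x+1^{\top}z-2x^{\top}z)$ (this also holds trivially when $\psi(z)=-\infty$). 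Chaining with $d_{\ell}^{\top}y\ge\phi(x)$ from \eqref{eq:optimality} produces \eqref{eq:penalty-cut}.

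Second, when $z\in\{0,1\}^{n_{x}}$ the extension property $\psi(z)=\phi(z)$ rewrites \eqref{eq:penalty-cut} as \eqref{eq:penalty-cut2} by substitution. For tightness I would exhibit $\hat x=z$: the penalty term $1^{\top}\hat x+1^{\top}z-2\hat x^{\top}z=2\sum_{i}(z_{i}-z_{i}^{2})$ vanishes because $z$ is binary, so the right-hand side of \eqref{eq:penalty-cut2} collapses to $\phi(z)=\phi(\hat x)$, matching the right-hand side of \eqref{eq:optimality} exactly. Taking $\hat y$ to be an optimal lower-level response to $\hat x$ (so that $d_{\ell}^{\top}\hat y=\phi(\hat x)$) makes \eqref{eq:optimality} and \eqref{eq:penalty-cut2} hold with equality simultaneously, which is tightness at $\hat x=z$ in the sense of Definition~\ref{def:tight}.

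I expect the only delicate point to be bookkeeping on the domain and on the penalty constant. Validity is meaningful only after restricting to lower-feasible $x$, since $\phi(x)=-\infty$ for $x\notin X_{LF}$; this is precisely why I invoke \eqref{eq:lower feasibility}, and I would state the restriction explicitly. Equally important is that the cut uses the uniform constant $\hat\rho$ from Corollary~\ref{crl:penalty} rather than the $x$-dependent threshold $\rho^{*}(x)$ of Lemma~\ref{lem:penalty}, so that one and the same inequality remains valid across all admissible $x$ simultaneously.
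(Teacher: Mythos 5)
Your proposal is correct and follows essentially the same route as the paper: both invoke Corollary~\ref{crl:penalty} to replace $\phi(x)$ by $L_{p}(x,\hat\rho)$ for lower-feasible $x$, use the fact that any fixed $z$ is feasible for the inner maximization to drop down to the claimed inequality, and obtain tightness by setting $x=z$ so the penalty term vanishes. Your explicit remarks about restricting to $x\in X_{LF}$ via \eqref{eq:lower feasibility} and about using the uniform $\hat\rho$ rather than $\rho^{*}(x)$ match the paper's reasoning.
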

                \begin{proof}\color{black}
                    According to Corollary \ref{crl:penalty}, for any $x\in\{0,1\}^{n_{x}}\cap X_{LF}$, we have $\phi(x)=L_{p}(x,\hat{\rho})$.
                    In the VFR \eqref{eq:VFR}, the lower-level feasibility $X_{LF}$ has been satisfied by \eqref{eq:lower feasibility}, and therefore, we can directly replace $\phi(x)$ in \eqref{eq:optimality} by $L_{p}(x,\hat{\rho})$, leading to
                    \begin{equation*}
                        d_{\ell}^{\top}y\geq L_{p}(x,\hat{\rho})=\max_{z\in[0,1]^{n_{x}}}\left\{\psi(z)-\hat{\rho}\left(1^{\top}x+1^{\top}z-2x^{\top}z\right)\right\},
                    \end{equation*}
                    which implies
                    \begin{equation*}
                        d_{\ell}^{\top}y\geq\psi(z)-\hat{\rho}\left(1^{\top}x+1^{\top}z-2x^{\top}z\right),~\forall z\in[0,1]^{n_{x}}.
                    \end{equation*}
                    Hence, for any $z\in[0,1]^{n_{x}}$, \eqref{eq:penalty-cut} is valid for \eqref{eq:optimality}.

                    For any $z\in\{0,1\}^{n_{x}}\subset[0,1]^{n_{x}}$, we have $\psi(z)=\phi(z)$, and hence, \eqref{eq:penalty-cut} can be rewritten as
                    \begin{equation*}
                        \begin{aligned}
                            d_{\ell}^{\top}y\geq\phi(z)-\hat{\rho}\left(1^{\top}x+1^{\top}z-2x^{\top}z\right).
                        \end{aligned}
                    \end{equation*}
                    By fixing $x$ on the right-hand side as $z$, we obtain
                    \begin{equation*}
                        d_{\ell}^{\top}y\geq \phi(z),
                    \end{equation*}
                    which implies that \eqref{eq:penalty-cut2} is valid and tight for \eqref{eq:optimality}.
                    This completes the proof.
                \end{proof}

                \begin{example}\label{exp:penalty}
                    We illustrate the penalty-based valid inequality \eqref{eq:penalty-cut2}.
                    First, note that we can reformulate \eqref{eq:penalty-cut2} as
                    \begin{equation*}
                        d_{\ell}^{\top}y\geq \phi(z)-\rho\left\|x-z\right\|_{1}
                    \end{equation*}
                    because $x,z\in\{0,1\}^{n_{x}}$.
                    When $n_{x}=1$, the inequality reduces to $d_{\ell}^{\top}y\geq \phi(z)-\rho|x-z|$.
                    Figure \ref{fig:example-penalty} illustrates the valid inequality with $z=1$.
                    The shaded part depicts the epigraph defined by the right-hand side of \eqref{eq:penalty-cut2} with respect to a general $\rho \geq \hat{\rho}$.
                    Note that the inequality \eqref{eq:penalty-cut2} is tight at $x=1$.
                    When $\rho\rightarrow+\infty$, the angle of the cut tends to zero.
                    This intuitively explains the existence of $\rho^*(x)$ in Lemma \ref{lem:penalty} and that of $\hat{\rho}$ in Corollary \ref{crl:penalty} because there are finitely many $x$-values.
                    In contrast, as $\rho$ decreases, the angle of the cut grows and eventually touches the point $(0,\phi(0))$.
                    This pertains to the threshold $\hat{\rho}$, as depicted by the red solid line.
                    Intuitively, the cut remains valid whenever $\rho \geq \hat{\rho}$.
                    Therefore, $\rho=\hat{\rho}$ provides the strongest penalty-based valid inequality.
                    \begin{figure}[!htbp]
                        \begin{center}
                            \vspace{-0ex}
                            \includegraphics[width=0.5\columnwidth]{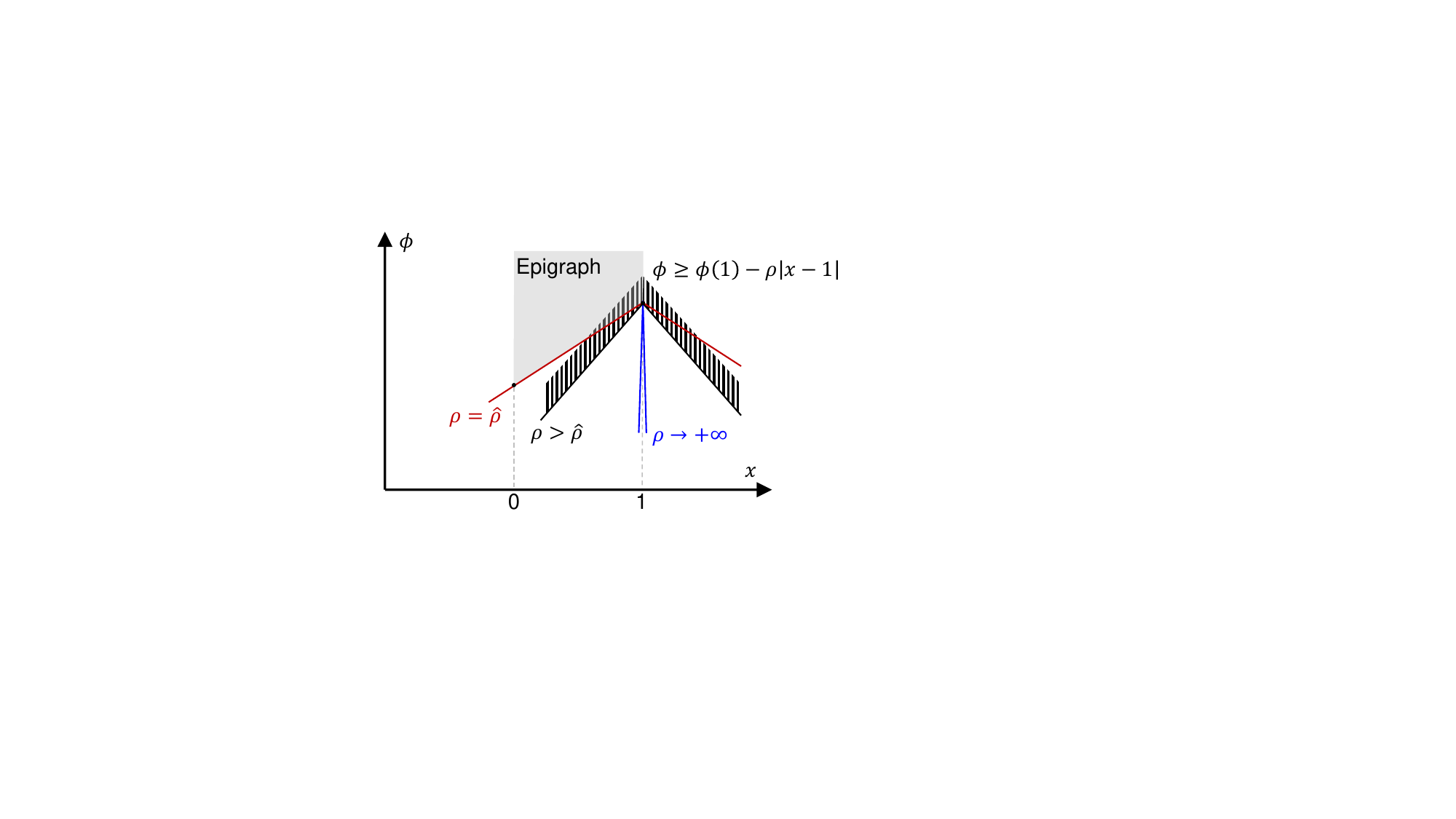}
                        \end{center}
                        \vspace{-4ex}
                        \caption{An illustrative example of the penalty-based valid equality when $n_{x}=1$.}
                        \vspace{-2ex}
                        \label{fig:example-penalty}
                    \end{figure}
                \end{example}

            \subsubsection{Lagrangian-based Valid Inequality}
                Next, we explore the Lagrangian method and design a Lagrangian function by relaxing $z=x$ as
                \begin{equation}\label{eq:LR}
                    L_{\ell}(x, \lambda)=\max_{z\in[0,1]^{n_{x}}}~\psi(z)-\lambda^{\top}(x-z),
                \end{equation}
                where $\lambda\in\mathbb{R}^{n_{x}}$ is a Lagrangian multiplier.
                Obviously, for any $x\in\{0,1\}^{n_{x}}$, $z=x$ is feasible to the right-hand side of \eqref{eq:LR}.
                Hence, $\phi(x)\leq L_{\ell}(x,\lambda)$ always holds, which implies the weak duality $\phi(x)\leq\min_{\lambda\in\mathbb{R}^{n_{x}}}L_{\ell}(x,\lambda)$.
                Furthermore, because $x$ is binary-valued, the Lagrangian dual admits the strong duality, as summarized in Lemma \ref{lem:LR}.
                \begin{lemma}\label{lem:LR}
                    For any $x\in\{0,1\}^{n_{x}}$, we have
                    \begin{equation}\label{eq:LR-strong}
                        \phi(x)=\min_{\lambda\in\mathbb{R}^{n_{x}}}L_{\ell}(x,\lambda).
                    \end{equation}
                    In addition, if $x\in X_{LF}$, there exists a $\lambda^{*}(x)\in\mathbb{R}^{n_{x}}$ such that for all $\lambda\in\mathbb{R}^{n_{x}}$ that satisfies
                    \begin{equation}\label{eq:LR-strong-condition}
                        \left\{\begin{aligned}
                            &\lambda_{i}\geq\lambda_{i}^{*}(x), \text{if~} x_{i}=1\\
                            &\lambda_{i}\leq\lambda_{i}^{*}(x), \text{if~} x_{i}=0
                        \end{aligned}\right.,~
                        \forall i\in[n_{x}],
                    \end{equation}
                    we have $\phi(x)=L_{\ell}(x,\lambda)$.
                \end{lemma}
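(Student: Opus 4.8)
The plan is to establish the reverse of the weak-duality inequality $\phi(x)\le\min_{\lambda}L_{\ell}(x,\lambda)$ that is already in hand, and to do so constructively by exhibiting a multiplier that closes the gap. The key structural observation is that, by the coordinatewise linearity of $\psi$ assumed earlier, for any fixed $\lambda$ the objective $\psi(z)-\lambda^{\top}(x-z)=\psi(z)+\lambda^{\top}z-\lambda^{\top}x$ is multilinear in $z$, so its maximum over the box $[0,1]^{n_{x}}$ is attained at a vertex. Hence $L_{\ell}(x,\lambda)=\max_{z\in\{0,1\}^{n_{x}}}\{\phi(z)-\lambda^{\top}(x-z)\}$, where I adopt the convention $\phi(z)=-\infty$ for $z\notin X_{LF}$. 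This turns the dual evaluation into a maximum over the finite vertex set, which is exactly what makes the componentwise multiplier conditions \eqref{eq:LR-strong-condition} natural. As in Lemma \ref{lem:penalty}, I would split into the cases $x\in X_{LF}$ and $x\notin X_{LF}$.

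For $x\in X_{LF}$, the vertex $z=x$ contributes exactly $\phi(x)$ to the reduced maximum, so $L_{\ell}(x,\lambda)\ge\phi(x)$ for every $\lambda$; it therefore suffices to find $\lambda^{*}(x)$ making no other vertex exceed $\phi(x)$. I would set $\lambda_{i}^{*}(x)$ large and positive where $x_{i}=1$ and large and negative where $x_{i}=0$, so that $\lambda^{*}(x)^{\top}(x-z)=\sum_{i:x_i=1}\lambda_i^*(x)(1-z_i)-\sum_{i:x_i=0}\lambda_i^*(x)z_i\ge 0$ for every vertex $z$, with equality only at $z=x$ and with value at least $\min_i|\lambda_i^*(x)|$ otherwise. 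Since there are finitely many vertices and $\phi(x)$ is finite, choosing the magnitudes larger than $\max_{z}\{\phi(z)-\phi(x)\}$ guarantees $\phi(z)-\lambda^{*}(x)^{\top}(x-z)\le\phi(x)$ for all $z\neq x$ (infeasible $z$ being harmless, contributing $-\infty$). This yields $L_{\ell}(x,\lambda^{*}(x))=\phi(x)$.

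To upgrade this single multiplier to the whole cone \eqref{eq:LR-strong-condition}, I would verify the monotonicity $\lambda^{\top}(x-z)\ge\lambda^{*}(x)^{\top}(x-z)$ for every vertex $z$ whenever $\lambda$ satisfies \eqref{eq:LR-strong-condition}: coordinatewise, if $x_{i}=1$ then $x_{i}-z_{i}\ge 0$ and $\lambda_{i}\ge\lambda_{i}^{*}(x)$, while if $x_{i}=0$ then $x_{i}-z_{i}\le 0$ and $\lambda_{i}\le\lambda_{i}^{*}(x)$, so each summand only increases. Consequently $\phi(z)-\lambda^{\top}(x-z)\le\phi(z)-\lambda^{*}(x)^{\top}(x-z)\le\phi(x)$, giving $L_{\ell}(x,\lambda)\le\phi(x)$ and hence $L_{\ell}(x,\lambda)=\phi(x)$ for all such $\lambda$. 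Taking the minimum over $\lambda$ then proves \eqref{eq:LR-strong} in this case.

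For $x\notin X_{LF}$ we have $\phi(x)=-\infty$, and I would show $\min_{\lambda}L_{\ell}(x,\lambda)=-\infty$. If every vertex is infeasible the reduced maximum is identically $-\infty$; otherwise, driving $\lambda$ to $+\infty$ on the coordinates with $x_{i}=1$ and to $-\infty$ on those with $x_{i}=0$ forces $\lambda^{\top}(x-z)\to+\infty$ for each of the finitely many feasible vertices $z\neq x$, so each finite term $\phi(z)-\lambda^{\top}(x-z)\to-\infty$ while the $z=x$ term is already $-\infty$; hence the maximum diverges to $-\infty$. I expect the main obstacle to be the bookkeeping around the $-\infty$ values of $\phi$: one must argue that the vertex reduction of $L_{\ell}$ remains valid (the multilinear maximum is attained at a feasible vertex whenever one exists) and that a single cone direction for $\lambda$ simultaneously dominates all finitely many feasible vertices. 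Once the reduction to the finite vertex set is justified, the remaining estimates are elementary sign arguments.
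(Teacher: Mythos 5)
Your proposal is correct, and it reaches the same destination as the paper's proof by the same overall strategy (weak duality plus exhibiting sufficiently extreme multipliers in the cone \eqref{eq:LR-strong-condition}, with a case split on $x\in X_{LF}$ versus $x\notin X_{LF}$), but the execution of the key step differs in a meaningful way. The paper establishes the existence of $\lambda^{*}(x)$ non-constructively: it looks at the optimizer $z^{*}(x,\lambda)$ of the relaxation, supposes for contradiction that $z^{*}(x,\lambda)\neq x$ persists for arbitrarily extreme $\lambda$, and derives $L_{\ell}(x,\lambda)=-\infty$ via a limiting argument, contradicting weak duality. You instead first reduce the inner maximization to the vertex set $\{0,1\}^{n_{x}}$ using the coordinatewise linearity of $\psi$ (this is exactly the content of Lemma \ref{lem:solution}, which the paper only states later, in the section on coefficient selection), and then give an explicit, quantitative choice of $\lambda^{*}(x)$ with $|\lambda_{i}^{*}(x)|>\max_{z}\{\phi(z)-\phi(x)\}$, followed by a clean coordinatewise monotonicity argument to cover the entire cone. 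Your route buys an explicit bound on the threshold multiplier (essentially anticipating the exact coefficients of Proposition \ref{pps:selection}) and avoids the somewhat delicate quantifier and limit bookkeeping in the paper's contradiction argument; the paper's route avoids invoking the vertex reduction at this stage. Both treat the $-\infty$ values of $\phi$ at infeasible points with the same level of informality, so your flagged concern about that bookkeeping applies equally to the published proof.
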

                The proof of Lemma \ref{lem:LR} is similar to that of Lemma \ref{lem:penalty} and is thus placed in Appendix \ref{apd:lem:LR}.
                According to Lemma \ref{lem:LR}, when $x$ is feasible to $\phi$ and $\lambda_{i}$ is sufficiently large or sufficiently small (depending on the value of $x_{i}$), $L_{\ell}(x,\lambda)=\phi(x)$ holds and we have the following corollary.
                \begin{corollary}\label{crl:LR}
                    There exist constant vectors $U,L\in\mathbb{R}^{n_{x}}$ such that for any $x\in\{0,1\}^{n_{x}}\cap X_{LF}$ and  $\lambda\in\mathbb{R}^{n_{x}}$  that satisfies
                    \begin{equation}\label{eq:LR-strong-condition2}
                        \left\{\begin{aligned}
                            &\lambda_{i}\geq U_{i}, \text{if~} x_{i}=1\\
                            &\lambda_{i}\leq L_{i}, \text{if~} x_{i}=0
                        \end{aligned}\right.,~
                        \forall i\in[n_{x}],
                    \end{equation}
                    we have $\phi(x)=L_{\ell}(x,\lambda)$.
                \end{corollary}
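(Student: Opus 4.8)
The plan is to follow exactly the template established by the proof of Corollary~\ref{crl:penalty}, replacing the single scalar threshold $\rho^*(x)$ with a pair of componentwise thresholds extracted from the vector $\lambda^*(x)$ supplied by Lemma~\ref{lem:LR}. The decisive structural fact is that $\{0,1\}^{n_x}\cap X_{LF}$ is finite, so any componentwise maximum or minimum taken over this index set is attained and finite.

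First I would invoke Lemma~\ref{lem:LR}: for each fixed $x\in\{0,1\}^{n_x}\cap X_{LF}$ it furnishes a multiplier $\lambda^*(x)\in\mathbb{R}^{n_x}$ such that $\phi(x)=L_\ell(x,\lambda)$ whenever $\lambda$ obeys the one-sided bounds in \eqref{eq:LR-strong-condition}, namely $\lambda_i\ge\lambda_i^*(x)$ on coordinates with $x_i=1$ and $\lambda_i\le\lambda_i^*(x)$ on coordinates with $x_i=0$. The next step is to aggregate these $x$-dependent thresholds into a single pair of vectors that works uniformly. I would define, for every $i\in[n_x]$,
\begin{equation*}
    U_i:=\max_{x\in\{0,1\}^{n_x}\cap X_{LF}}\lambda_i^*(x),
    \qquad
    L_i:=\min_{x\in\{0,1\}^{n_x}\cap X_{LF}}\lambda_i^*(x),
\end{equation*}
which are well-defined real numbers precisely because the feasible binary set is finite.

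It then remains to verify that the uniform condition \eqref{eq:LR-strong-condition2} implies the $x$-specific condition \eqref{eq:LR-strong-condition} for the particular $x$ at hand. Fix any $x\in\{0,1\}^{n_x}\cap X_{LF}$ and any $\lambda$ satisfying \eqref{eq:LR-strong-condition2}, and check the two coordinate cases. On a coordinate with $x_i=1$ we have $\lambda_i\ge U_i\ge\lambda_i^*(x)$ by the definition of $U_i$ as a maximum; on a coordinate with $x_i=0$ we have $\lambda_i\le L_i\le\lambda_i^*(x)$ by the definition of $L_i$ as a minimum. Hence $\lambda$ meets \eqref{eq:LR-strong-condition}, and Lemma~\ref{lem:LR} yields $\phi(x)=L_\ell(x,\lambda)$, completing the argument.

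I do not anticipate a genuine obstacle here, since the result is a direct uniformization of Lemma~\ref{lem:LR} over a finite set. The only point requiring a sliver of care—and the analogue of where Corollary~\ref{crl:penalty} was subtle—is that the thresholds are now \emph{vectors} and the two coordinate cases pull $U_i$ and $L_i$ in opposite directions; one must confirm that taking a coordinatewise maximum for the ``$x_i=1$'' bounds and a coordinatewise minimum for the ``$x_i=0$'' bounds is the correct, consistent choice rather than a single uniform scalar as in the penalty case.
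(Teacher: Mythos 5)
Your proof is correct and follows essentially the same route as the paper's: both invoke Lemma~\ref{lem:LR} to obtain the $x$-dependent thresholds $\lambda^*(x)$, define $U_i$ as the maximum and $L_i$ as the minimum of $\lambda_i^*(x)$ over the finite set $\{0,1\}^{n_x}\cap X_{LF}$, and then check that \eqref{eq:LR-strong-condition2} implies \eqref{eq:LR-strong-condition}. Your added remark on why the coordinatewise max/min split is the right aggregation is a helpful clarification but does not change the argument.
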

                The proof of Corollary \ref{crl:LR} is similar to that of Corollary \ref{crl:penalty} and is thus placed in Appendix \ref{apd:crl:LR}.
                Based on Corollary \ref{crl:LR}, we derive a Lagrangian-based valid inequality in Proposition \ref{pps:LR-cut}.
                \begin{proposition}\label{pps:LR-cut}
                    For any $z\in[0,1]^{n_{x}}$, the following inequality is valid for any $(x,y) \in X \times Y$ satisfying \eqref{eq:optimality}:
                    \begin{equation}\label{eq:LR-cut}
                        d_{\ell}^{\top}y\geq \psi(z)-(\hat{\lambda}(1-x))^{\top}(x-z),
                    \end{equation}
                    where $\hat{\lambda}(x):=U\odot(1-x)+L\odot x$ and $\odot$ denotes the Hadamard product \cite{shapiro2021lectures}.
                    If $z\in\{0,1\}^{n_{x}}$, \eqref{eq:LR-cut} is valid and tight for \eqref{eq:optimality}, and it can be rewritten as
                    \begin{equation}\label{eq:LR-cut2}
                        d_{\ell}^{\top}y\geq \phi(z)-(\hat{\lambda}(z))^{\top}(x-z).
                    \end{equation}
                \end{proposition}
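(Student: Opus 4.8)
The plan is to follow the same template as the proof of Proposition \ref{pps:penalty-cut}, but replace the penalty strong-duality result by its Lagrangian counterpart, Corollary \ref{crl:LR}. The whole argument hinges on one observation: the dual vector $\hat{\lambda}(1-x)$ is engineered to sit exactly on the boundary of the region \eqref{eq:LR-strong-condition2} that certifies $\phi(x)=L_{\ell}(x,\lambda)$. Writing it coordinatewise, $(\hat{\lambda}(1-x))_{i}=U_{i}x_{i}+L_{i}(1-x_{i})$, so it evaluates to $U_{i}$ when $x_{i}=1$ and to $L_{i}$ when $x_{i}=0$; in both cases the threshold in \eqref{eq:LR-strong-condition2} is met with equality (namely $\lambda_{i}=U_{i}\geq U_{i}$ when $x_{i}=1$, and $\lambda_{i}=L_{i}\leq L_{i}$ when $x_{i}=0$). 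Hence Corollary \ref{crl:LR} applies with $\lambda=\hat{\lambda}(1-x)$, giving $\phi(x)=L_{\ell}(x,\hat{\lambda}(1-x))$ for every $x\in\{0,1\}^{n_{x}}\cap X_{LF}$.

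For validity I would then note that, by the definition \eqref{eq:LR} of $L_{\ell}$ as a maximum over $z\in[0,1]^{n_{x}}$, any fixed feasible $z$ is dominated, so $\phi(x)\geq\psi(z)-(\hat{\lambda}(1-x))^{\top}(x-z)$ for all $z\in[0,1]^{n_{x}}$. As in the penalty case, the lower-level feasibility constraint \eqref{eq:lower feasibility} in the VFR guarantees $x\in X_{LF}$, so we may chain this bound with the optimality condition \eqref{eq:optimality}, i.e.\ $d_{\ell}^{\top}y\geq\phi(x)$, to obtain \eqref{eq:LR-cut}; in the language of Definition \ref{def:tight}(i) this is precisely $f(x)\leq g(x)$.

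For the binary specialization I would use the extension property $\psi(z)=\phi(z)$ for $z\in\{0,1\}^{n_{x}}$, and then establish the coefficient-substitution identity $(\hat{\lambda}(1-x))^{\top}(x-z)=(\hat{\lambda}(z))^{\top}(x-z)$ for all $x,z\in\{0,1\}^{n_{x}}$. I would prove this coordinatewise through the four-case table on $(x_{i},z_{i})\in\{0,1\}^{2}$: the diagonal cases yield $x_{i}-z_{i}=0$ so the coefficient is immaterial, while the off-diagonal cases $(1,0)$ and $(0,1)$ force both coefficients to equal $U_{i}$ and $L_{i}$ respectively on the nonzero factor $x_{i}-z_{i}$. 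This converts \eqref{eq:LR-cut} into \eqref{eq:LR-cut2}. Tightness is then immediate by evaluating at $\hat{x}=z$: the term $(\hat{\lambda}(z))^{\top}(z-z)$ vanishes, the right-hand side of \eqref{eq:LR-cut2} collapses to $\phi(z)$, and we recover equality with \eqref{eq:optimality} at $\hat{x}=z$, matching Definition \ref{def:tight}(ii).

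I expect the main obstacle to be the coefficient-substitution identity rather than validity or tightness. The subtlety is that $\hat{\lambda}(1-x)$ depends on the variable $x$, so the term $(\hat{\lambda}(1-x))^{\top}(x-z)$ is a priori quadratic in $x$; the whole point of the rewriting is that restricting to binary $x$ and $z$ is exactly what makes the two coefficient vectors agree on the support where $x-z\neq 0$, thereby producing the genuinely linear cut \eqref{eq:LR-cut2} whose coefficient $\hat{\lambda}(z)$ is a constant once $z$ is fixed. Verifying that this collapse is precisely the four-case coincidence, and not merely an approximation, is the step that requires the most care; the remaining arguments are routine once Corollary \ref{crl:LR} is in hand.
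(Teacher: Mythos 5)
Your proposal is correct and follows essentially the same route as the paper's proof in Appendix \ref{apd:pps:LR-cut}: invoke Corollary \ref{crl:LR} with $\lambda=\hat{\lambda}(1-x)$ (which satisfies \eqref{eq:LR-strong-condition2} by construction), drop the maximum over $z$ for validity, and use the coordinatewise coincidence of $\hat{\lambda}(1-x)$ and $\hat{\lambda}(z)$ on the support of $x-z$ to obtain \eqref{eq:LR-cut2}, with tightness at $x=z$. The four-case verification you flag as the delicate step is exactly the algebraic substitution $U_{i}x_{i}+L_{i}(1-x_{i})\to U_{i}(1-z_{i})+L_{i}z_{i}$ carried out in the paper.
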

                The proof of Proposition \ref{pps:LR-cut} is similar to that of Proposition \ref{pps:penalty-cut} and is placed in Appendix \ref{apd:pps:LR-cut}.

                \begin{example}\label{exp:LR}
                    We continue Example \ref{exp:penalty} to illustrate the Lagrangian-based valid inequality \eqref{eq:LR-cut2}.
                    Because $x\in\{0,1\}^{n_{x}}$, the valid inequality in Example \ref{exp:penalty} can be recast as $d_{\ell}^{\top}y\geq \phi(z)-\rho\zeta(z)(x-z)$, where $\zeta(z)=1$ when $z=0$ and $\zeta(z)=-1$ when $z=1$.
                    This is equivalent to $d_{\ell}^{\top}y\geq \phi(z)-\lambda(z)(x-z)$ with $\lambda(z):=\rho\zeta(z)$.
                    Figure \ref{fig:example-LR} depicts and compares inequalities \eqref{eq:penalty-cut2} and \eqref{eq:LR-cut2} with $z=1$.
                    The black shaded part depicts the epigraph defined by the right-hand side of \eqref{eq:LR-cut2} with respect to a general $\lambda \leq L$, which coincides with the left side of the gray shaded part defined by \eqref{eq:penalty-cut2} (for $\rho = -\lambda$).
                    Note that the inequality \eqref{eq:LR-cut2} is tight at $x=1$.
                    As $\lambda$ approaches $-\infty$, the cut becomes valid.
                    This intuitively explains the existence of $\lambda^*(x)$ in Lemma \ref{lem:LR} and that of $L$ in Corollary \ref{crl:LR} because there are finitely many $x$-values.
                    In contrast, as $\lambda$ increases, the cut eventually touches the point $(0,\phi(0))$.
                    This pertains the threshold $L$, as depicted by the red solid line.
                    Intuitively, inequality \eqref{eq:LR-cut2} remains valid until $\lambda$ increases to $L$.
                    Therefore, $\lambda=L$ provides the strongest Lagrangian-based valid inequality with $z=1$.
                    \begin{figure}[!htbp]
                        \begin{center}
                            \vspace{-0ex}
                            \includegraphics[width=0.55\columnwidth]{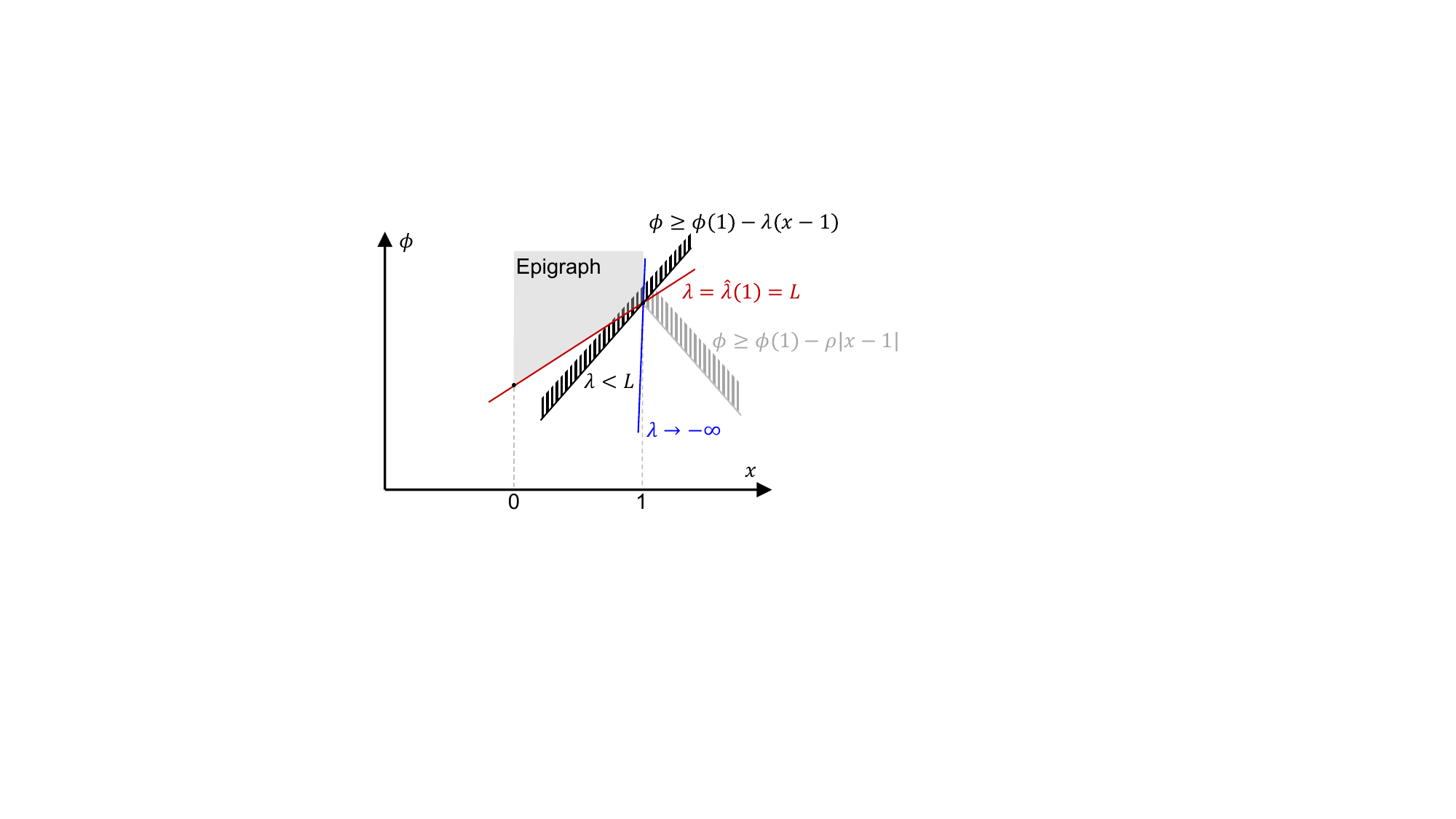}
                        \end{center}
                        \vspace{-4ex}
                        \caption{An illustrative example of the Lagrangian-based valid equality when $n_x=1$.}
                        \vspace{-2ex}
                        \label{fig:example-LR}
                    \end{figure}
                \end{example}

            \subsubsection{Augmented Lagrangian-based Valid Inequality}
                Furthermore, we consider the augmented Lagrangian method and design a Lagrangian function by relaxing $z=x$ as
                \begin{equation}\label{eq:ALR}
                    L_{a}(x, \lambda,\rho)=\max_{z\in[0,1]^{n_{x}}}~\psi(z)-\lambda^{\top}(x-z)-\rho\left(1^{\top}x+1^{\top}z-2x^{\top}z\right),
                \end{equation}
                where $\lambda\in\mathbb{R}^{n_{x}}$ and $\rho\in\mathbb{R}_{+}$ are Lagrangian multipliers.
                Obviously, for any $x\in\{0,1\}^{n_{x}}$, $z=x$ is feasible to the right-hand side of \eqref{eq:ALR}.
                Hence, $\phi(x)\leq L_{a}(x,\lambda,\rho)$ always holds, which implies the weak duality $\phi(x)\leq\min_{\lambda\in\mathbb{R}^{n_{x}},\rho\in\mathbb{R}_{+}}L_{a}(x,\lambda,\rho)$.
                Furthermore, because $x$ is binary-valued, $L_a(x, \lambda, \rho)$ admits the strong duality, as shown in Lemma \ref{lem:ALR}.
                \begin{lemma}\label{lem:ALR}
                    For any $x\in\{0,1\}^{n_{x}}$, we have
                    \begin{equation}\label{eq:ALR-strong}
                        \phi(x)=\min_{\lambda\in\mathbb{R}^{n_{x}},\rho\in\mathbb{R}_{+}}L_{a}(x,\lambda,\rho).
                    \end{equation}
                    In addition, if $x\in X_{LF}$, there exist $\lambda^{*}(x)\in\mathbb{R}^{n_{x}}$ and $\rho^{*}(x)\in\mathbb{R}_{+}$ such that for all $\rho\geq\rho^{*}(x)$ and for all $\lambda$ that satisfies
                    \begin{equation}\label{eq:ALR-strong-condition}
                        \left\{\begin{aligned}
                            &\lambda_{i}\geq\lambda_{i}^{*}(x), \text{if~} x_{i}=1\\
                            &\lambda_{i}\leq\lambda_{i}^{*}(x), \text{if~} x_{i}=0
                        \end{aligned}\right.,~
                        \forall i\in[n_{x}],
                    \end{equation}
                    we have $\phi(x)=L_{a}(x,\lambda,\rho)$.
                \end{lemma}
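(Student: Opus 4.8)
The plan is to deduce this augmented statement from the two dualities already established, namely the penalty duality of Lemma~\ref{lem:penalty} and the Lagrangian duality of Lemma~\ref{lem:LR}, by exploiting two elementary identities. First, setting $\lambda=0$ in~\eqref{eq:ALR} recovers the penalty Lagrangian, $L_{a}(x,0,\rho)=L_{p}(x,\rho)$; second, setting $\rho=0$ recovers the linear Lagrangian, $L_{a}(x,\lambda,0)=L_{\ell}(x,\lambda)$. The augmented function therefore interpolates between the two, and I will show that strong duality is inherited in both regimes. Throughout, weak duality $\phi(x)\le L_{a}(x,\lambda,\rho)$ is available for every $(\lambda,\rho)$, as already noted before the statement, since the point $z=x$ is feasible and attains objective value $\psi(x)=\phi(x)$ because $x$ is binary, so that $\lambda^{\top}(x-x)=0$ and $1^{\top}x+1^{\top}x-2x^{\top}x=0$.

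The crucial observation, and the only place where the binary-tender assumption enters, is the pointwise bound $1^{\top}x+1^{\top}z-2x^{\top}z=\sum_{i:x_i=0}z_i+\sum_{i:x_i=1}(1-z_i)\ge0$, valid for all $x\in\{0,1\}^{n_x}$ and $z\in[0,1]^{n_x}$. Since $\rho\ge0$, the penalty term $-\rho(1^{\top}x+1^{\top}z-2x^{\top}z)$ is nonpositive for every feasible $z$, so the function of $z$ being maximized in~\eqref{eq:ALR} is dominated pointwise by that in~\eqref{eq:LR}; taking the maximum over $z$ yields $L_{a}(x,\lambda,\rho)\le L_{\ell}(x,\lambda)$ for all $\rho\ge0$. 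This single inequality does most of the work.

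To prove the ``in addition'' part, I would take $\lambda^{*}(x)$ to be the threshold vector supplied by Lemma~\ref{lem:LR} and set $\rho^{*}(x)=0$. For any $x\in X_{LF}$, any $\rho\ge0$, and any $\lambda$ obeying the sign conditions~\eqref{eq:ALR-strong-condition}, Lemma~\ref{lem:LR} gives $L_{\ell}(x,\lambda)=\phi(x)$, and the sandwich $\phi(x)\le L_{a}(x,\lambda,\rho)\le L_{\ell}(x,\lambda)=\phi(x)$ forces equality. Notably this shows the conclusion holds for \emph{every} $\rho\ge0$, so the threshold $\rho^{*}(x)$ is vacuous and the only binding requirement is the sign condition on $\lambda$ inherited from the linear Lagrangian. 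For the global identity~\eqref{eq:ALR-strong}, I split on feasibility: if $x\in X_{LF}$, achievability from the previous step together with weak duality gives $\min_{\lambda,\rho}L_{a}=\phi(x)$; if $x\notin X_{LF}$, then $\phi(x)=-\infty$, and fixing $\lambda=0$ reduces $L_{a}(x,0,\rho)$ to $L_{p}(x,\rho)$, whose infimum over $\rho\in\mathbb{R}_{+}$ equals $-\infty$ by Lemma~\ref{lem:penalty}; hence $\min_{\lambda,\rho}L_{a}(x,\lambda,\rho)=-\infty=\phi(x)$.

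I do not anticipate a genuine obstacle here: the work is front-loaded into verifying $L_{a}\le L_{\ell}$, which is immediate once nonnegativity of the penalty term is recorded. The only point demanding care is to resist proving more than claimed---one should check that the sign conditions in~\eqref{eq:ALR-strong-condition} are exactly those of Lemma~\ref{lem:LR}, so that the $\lambda^{*}(x)$ threshold transfers verbatim and no new estimate on $\rho$ is actually required. An alternative, self-contained route mirroring the proof of Lemma~\ref{lem:penalty} would analyze the maximizer $z^{*}(x,\lambda,\rho)$ directly and argue that the correctly signed linear term already forces $z^{*}=x$ while the nonnegative penalty can only reinforce this; I would keep the sandwich argument as the main line, since it is shorter and reuses the established lemmas.
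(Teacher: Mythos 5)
Your proof is correct, but it takes a genuinely different route from the paper's. The paper proves Lemma~\ref{lem:ALR} from scratch in Appendix~\ref{apd:lem:ALR}, mirroring the structure of the proofs of Lemmas~\ref{lem:penalty} and~\ref{lem:LR}: it splits on $x\in X_{LF}$ versus $x\notin X_{LF}$, analyzes the maximizer $z^{*}(x,\lambda,\rho)$ directly, and runs contradiction arguments that drive $\rho$ and the appropriately signed $\lambda_{i}$ to infinity to force $z^{*}=x$. You instead reuse the two earlier lemmas as black boxes via a sandwich: the identity $1^{\top}x+1^{\top}z-2x^{\top}z=\sum_{i:x_i=0}z_i+\sum_{i:x_i=1}(1-z_i)\ge0$ gives $L_{a}(x,\lambda,\rho)\le L_{\ell}(x,\lambda)$ for all $\rho\ge0$, which combined with weak duality and Lemma~\ref{lem:LR} settles the feasible case, while the specialization $L_{a}(x,0,\rho)=L_{p}(x,\rho)$ and Lemma~\ref{lem:penalty} settle the infeasible case. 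Your argument is shorter, avoids repeating the limiting arguments, and yields the strictly sharper conclusion that $\rho^{*}(x)=0$ works --- i.e., the threshold on $\rho$ in the statement is vacuous and only the sign condition on $\lambda$ is binding --- which the paper's proof does not observe (it keeps a nontrivial $\rho^{*}(x)$). What the paper's self-contained route buys is structural uniformity across the three lemmas and independence from the order in which they are established; since Lemmas~\ref{lem:penalty} and~\ref{lem:LR} do precede Lemma~\ref{lem:ALR}, your reliance on them is legitimate, and the sign conditions in~\eqref{eq:ALR-strong-condition} are indeed verbatim those of~\eqref{eq:LR-strong-condition}, so the transfer of $\lambda^{*}(x)$ is sound.
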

                The proof of Lemma \ref{lem:ALR} is similar to that of Lemma \ref{lem:penalty} and is thus placed in Appendix \ref{apd:lem:ALR}.
                According to Lemma \ref{lem:ALR}, $L_{a}(x,\lambda,\rho)=\phi(x)$ when $x$ is feasible to $\phi$, $\rho$ is sufficiently large, and $\lambda_{i}$ is sufficiently large or sufficiently small
                Hence, we have the following corollary.
                \begin{corollary}\label{crl:ALR}
                    There exist a constant $\hat{\rho}$ and constant vectors $U,L\in\mathbb{R}^{n_{x}}$ such that for any $x\in\{0,1\}^{n_{x}}\cap X_{LF}$, any $\rho\geq\hat{\rho}$, and any $\lambda\in\mathbb{R}^{n_{x}}$  that satisfies
                    \begin{equation}\label{eq:ALR-strong-condition2}
                        \left\{\begin{aligned}
                            &\lambda_{i}\geq U_{i}, \text{if~} x_{i}=1\\
                            &\lambda_{i}\leq L_{i}, \text{if~} x_{i}=0
                        \end{aligned}\right.,~
                        \forall i\in[n_{x}],
                    \end{equation}
                    we have $\phi(x)=L_{a}(x,\lambda,\rho)$.
                \end{corollary}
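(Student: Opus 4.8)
The plan is to mirror the proof of Corollary~\ref{crl:penalty}, exploiting the fact that the index set $\{0,1\}^{n_{x}}\cap X_{LF}$ is finite so that the per-point thresholds supplied by Lemma~\ref{lem:ALR} can be aggregated into uniform ones. First I would invoke Lemma~\ref{lem:ALR}: for each $x\in\{0,1\}^{n_{x}}\cap X_{LF}$, the lemma furnishes a penalty threshold $\rho^{*}(x)\in\mathbb{R}_{+}$ and a multiplier vector $\lambda^{*}(x)\in\mathbb{R}^{n_{x}}$ such that $\phi(x)=L_{a}(x,\lambda,\rho)$ holds for every $\rho\geq\rho^{*}(x)$ and every $\lambda$ obeying the coordinate-wise condition~\eqref{eq:ALR-strong-condition}.

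The key step is to define the uniform constants so that the uniform condition~\eqref{eq:ALR-strong-condition2} implies the per-point condition~\eqref{eq:ALR-strong-condition} for whichever $x$ is under consideration. For the penalty coefficient this is straightforward: I would set $\hat{\rho}:=\max_{x\in\{0,1\}^{n_{x}}\cap X_{LF}}\rho^{*}(x)$, which is finite because the feasible set is finite, so that $\rho\geq\hat{\rho}$ forces $\rho\geq\rho^{*}(x)$ for every $x$. For the multiplier the definition must respect the direction of each coordinate-wise inequality, which is dictated by the value of $x_{i}$. Since coordinate $i$ imposes a lower bound $\lambda_{i}\geq\lambda_{i}^{*}(x)$ precisely when $x_{i}=1$ and an upper bound $\lambda_{i}\leq\lambda_{i}^{*}(x)$ precisely when $x_{i}=0$, I would set
\[
U_{i}:=\max_{\{x\in\{0,1\}^{n_{x}}\cap X_{LF}:~x_{i}=1\}}\lambda_{i}^{*}(x),\qquad L_{i}:=\min_{\{x\in\{0,1\}^{n_{x}}\cap X_{LF}:~x_{i}=0\}}\lambda_{i}^{*}(x),
\]
so that $U_{i}$ dominates from above every relevant lower threshold and $L_{i}$ is dominated from below by every relevant upper threshold.

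With these constants fixed, the verification would close quickly. For any $x\in\{0,1\}^{n_{x}}\cap X_{LF}$, any $\rho\geq\hat{\rho}$, and any $\lambda$ satisfying~\eqref{eq:ALR-strong-condition2}, one has $\rho\geq\hat{\rho}\geq\rho^{*}(x)$; moreover, for each $i$ with $x_{i}=1$ the choice of $U_{i}$ gives $\lambda_{i}\geq U_{i}\geq\lambda_{i}^{*}(x)$, while for each $i$ with $x_{i}=0$ the choice of $L_{i}$ gives $\lambda_{i}\leq L_{i}\leq\lambda_{i}^{*}(x)$. Thus condition~\eqref{eq:ALR-strong-condition} holds for this $x$, and Lemma~\ref{lem:ALR} would yield $\phi(x)=L_{a}(x,\lambda,\rho)$, as desired.

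The main obstacle is not deep but is a bookkeeping subtlety: the coordinate-wise bound that must be uniformized switches direction from one coordinate to another according to $x_{i}$, so the aggregation has to take a maximum over the $x$-subfamily with $x_{i}=1$ for $U_{i}$ and a minimum over the $x$-subfamily with $x_{i}=0$ for $L_{i}$, rather than a single extremum over all feasible $x$. I would also remark on the degenerate case in which one of these subfamilies is empty for some coordinate $i$ (e.g., no feasible $x$ has $x_{i}=1$): there the corresponding extremum is vacuous and $U_{i}$ (resp.\ $L_{i}$) may be assigned any real value, since the associated branch of~\eqref{eq:ALR-strong-condition2} is never triggered for any $x\in X_{LF}$.
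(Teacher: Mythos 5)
Your proof is correct and follows essentially the same route as the paper: invoke Lemma~\ref{lem:ALR} pointwise, then use the finiteness of $\{0,1\}^{n_{x}}\cap X_{LF}$ to aggregate the per-point thresholds into uniform constants via $\hat{\rho}:=\max_{x}\rho^{*}(x)$ and extrema of $\lambda_{i}^{*}(x)$. The only difference is that you take the extrema over the subfamilies $\{x:x_{i}=1\}$ and $\{x:x_{i}=0\}$ rather than over all of $\{0,1\}^{n_{x}}\cap X_{LF}$ as the paper does; this is a harmless (indeed slightly tighter) refinement, and your remark on the empty-subfamily case is a sensible extra precaution.
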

                The proof of Corollary \ref{crl:ALR} is similar to that of Corollary \ref{crl:penalty} and is thus placed in Appendix \ref{apd:crl:ALR}.

                Next, we derive an augmented Lagrangian-based valid inequality in Proposition \ref{pps:ALR-cut}.
                \begin{proposition}\label{pps:ALR-cut}
                    For any $z\in[0,1]^{n_{x}}$, the following inequality is valid for any $(x,y) \in X \times Y$ satisfying \eqref{eq:optimality}:
                    \begin{equation}\label{eq:ALR-cut}
                        d_{\ell}^{\top}y\geq \psi(z)-(\hat{\lambda}(1-x))^{\top}(x-z)-\hat{\rho}\left(1^{\top}x+1^{\top}z-2x^{\top}z\right).
                    \end{equation}
                    If $z\in\{0,1\}^{n_{x}}$, \eqref{eq:ALR-cut} is valid and tight for \eqref{eq:optimality}, and it can be rewritten as
                    \begin{equation}\label{eq:ALR-cut2}
                        d_{\ell}^{\top}y\geq \phi(z)-(\hat{\lambda}(z))^{\top}(x-z)-\hat{\rho}\left(1^{\top}x+1^{\top}z-2x^{\top}z\right).
                    \end{equation}
                \end{proposition}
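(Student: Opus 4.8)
The plan is to mirror the proof of Proposition~\ref{pps:penalty-cut}, with Corollary~\ref{crl:ALR} playing the role of Corollary~\ref{crl:penalty} and with the additional Lagrangian term carried along. The first step is to notice that the particular multiplier $\hat{\lambda}(1-x)=U\odot x+L\odot(1-x)$ is chosen exactly so that the threshold condition~\eqref{eq:ALR-strong-condition2} holds with equality coordinatewise: it sets $\lambda_{i}=U_{i}$ when $x_{i}=1$ and $\lambda_{i}=L_{i}$ when $x_{i}=0$. Therefore, for every $x\in\{0,1\}^{n_{x}}\cap X_{LF}$, Corollary~\ref{crl:ALR} with $\rho=\hat{\rho}$ gives $\phi(x)=L_{a}(x,\hat{\lambda}(1-x),\hat{\rho})$.

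Since lower-level feasibility ($x\in X_{LF}$) is already enforced by constraint~\eqref{eq:lower feasibility} in the VFR, I would substitute this identity directly into~\eqref{eq:optimality} to obtain
\begin{equation*}
    d_{\ell}^{\top}y\geq L_{a}(x,\hat{\lambda}(1-x),\hat{\rho})=\max_{z\in[0,1]^{n_{x}}}\left\{\psi(z)-(\hat{\lambda}(1-x))^{\top}(x-z)-\hat{\rho}\left(1^{\top}x+1^{\top}z-2x^{\top}z\right)\right\}.
\end{equation*}
Dropping the maximization and fixing any single $z\in[0,1]^{n_{x}}$ only relaxes the right-hand side, which immediately yields the validity of~\eqref{eq:ALR-cut} for all $(x,y)$ satisfying~\eqref{eq:optimality}.

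For the tightness claim I would restrict to $z\in\{0,1\}^{n_{x}}$, so that $\psi(z)=\phi(z)$ holds because $z$ is binary-valued. The one genuinely new ingredient, relative to the penalty case, is converting the $x$-dependent coefficient $\hat{\lambda}(1-x)$ into the constant coefficient $\hat{\lambda}(z)$. I would establish the coordinatewise identity $(\hat{\lambda}(1-x))_{i}(x_{i}-z_{i})=(\hat{\lambda}(z))_{i}(x_{i}-z_{i})$ for all binary $x_{i},z_{i}$: both sides vanish when $x_{i}=z_{i}$, both equal $U_{i}$ when $(x_{i},z_{i})=(1,0)$, and both equal $-L_{i}$ when $(x_{i},z_{i})=(0,1)$. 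Summing over $i$ rewrites~\eqref{eq:ALR-cut} as~\eqref{eq:ALR-cut2}. Evaluating~\eqref{eq:ALR-cut2} at $x=z$ then makes the term $(\hat{\lambda}(z))^{\top}(x-z)$ vanish and, using $z^{\top}z=1^{\top}z$ for binary $z$, makes the penalty term vanish as well, leaving $d_{\ell}^{\top}y\geq\phi(z)$; this is precisely~\eqref{eq:optimality} evaluated at $x=z$, so the cut is tight at $\hat{x}=z$.

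I expect the coefficient-conversion step to be the main obstacle, since unlike the penalty cut the augmented cut carries the variable multiplier $\hat{\lambda}(1-x)$, and one must confirm that replacing it by the constant $\hat{\lambda}(z)$ leaves the inequality intact on the binary cube. This is exactly where Assumption~\ref{asp:binary} is indispensable: the identity $(\hat{\lambda}(1-x))^{\top}(x-z)=(\hat{\lambda}(z))^{\top}(x-z)$ holds because $x$ is binary and fails for fractional $x$.
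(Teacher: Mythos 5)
Your proposal is correct and follows essentially the same route as the paper's proof in Appendix \ref{apd:pps:ALR-cut}: invoke Corollary \ref{crl:ALR} with the multiplier $\hat{\lambda}(1-x)$ (which meets condition \eqref{eq:ALR-strong-condition2} coordinatewise), substitute $L_{a}(x,\hat{\lambda}(1-x),\hat{\rho})$ for $\phi(x)$ in \eqref{eq:optimality}, drop the maximization over $z$, and then for binary $z$ use the identity $(\hat{\lambda}(1-x))^{\top}(x-z)=(\hat{\lambda}(z))^{\top}(x-z)$ before checking tightness at $x=z$. Your explicit case-by-case verification of that coefficient-conversion identity is a slightly more detailed rendering of the algebraic rewriting the paper performs, but the argument is the same.
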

                The proof of Proposition \ref{pps:ALR-cut} is similar to that of Proposition \ref{pps:penalty-cut} and is thus placed in Appendix \ref{apd:pps:ALR-cut}.
                When $\lambda=0$, \eqref{eq:ALR-cut2} reduces to \eqref{eq:penalty-cut2}; when $\rho=0$, \eqref{eq:ALR-cut2} reduces to \eqref{eq:LR-cut2}.

                \begin{example}\label{exp:ALR}
                    We continue using the same setup in Example \ref{exp:penalty}.
                    As compared to inequality \eqref{eq:penalty-cut2}, inequality \eqref{eq:ALR-cut2} adds a new term $-\lambda (x-z)$ on the right-hand side.
                    By adding the new term, we reshape the cut defined by inequality \eqref{eq:penalty-cut2} through changing the slopes on both sides.
                    Figure \ref{fig:example-ALR} shows an illustrative example with $z=1$.
                    The black shaded part depicts the epigraph defined by the right-hand side of \eqref{eq:ALR-cut2}.
                    Note that the inequality \eqref{eq:ALR-cut2} is tight at $x=1$ and its validity depends on the value of $\rho$ and $\lambda$.
                    By adjusting their values until the left side touches point $(0,\phi(0))$, we can obtain the strongest inequality.
                    Note that there may exist multiple \((\rho, \lambda)\) combinations to achieve this.
                    \begin{figure}[!htbp]
                        \begin{center}
                            \vspace{-0ex}
                            \includegraphics[width=0.5\columnwidth]{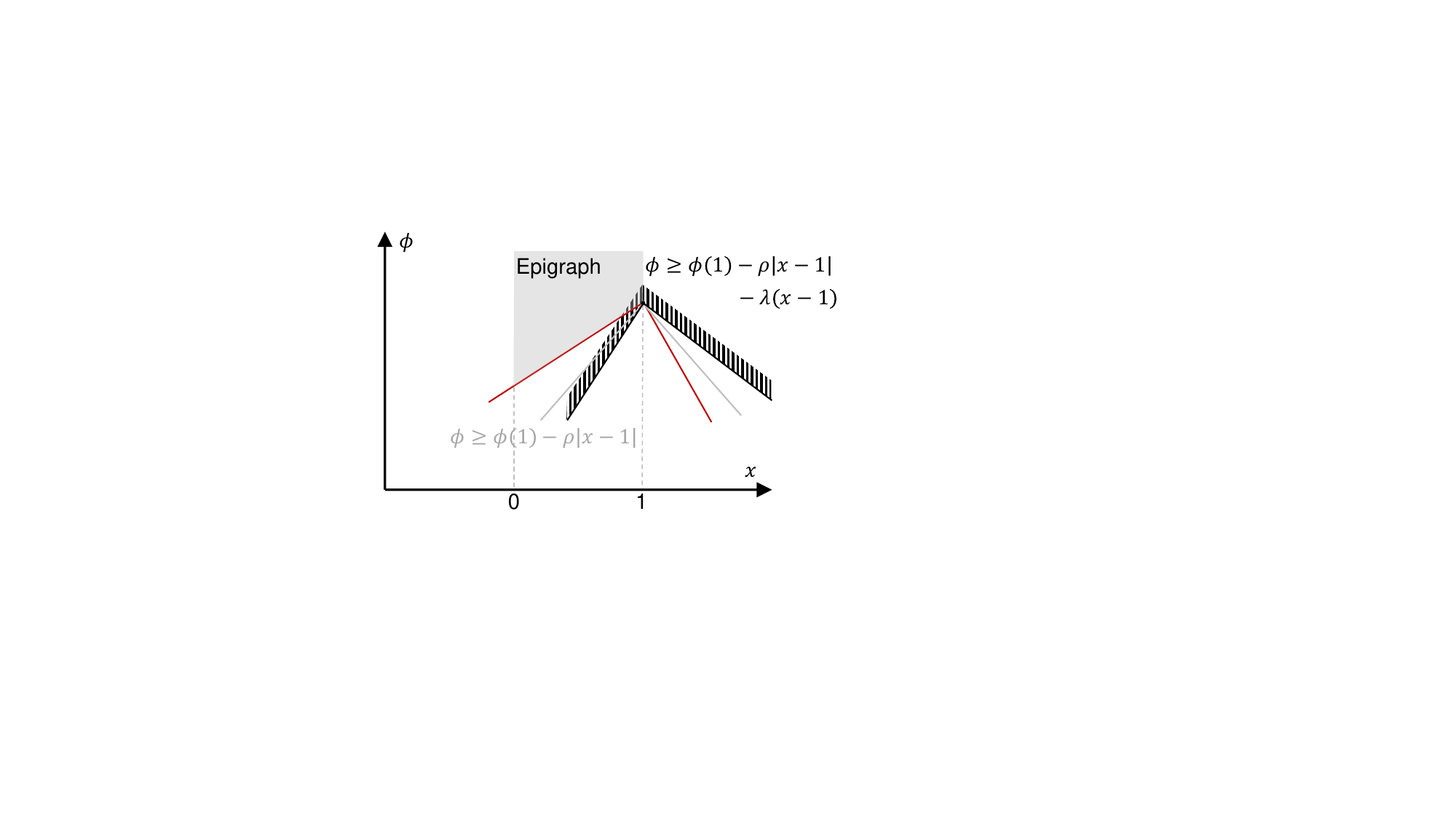}
                        \end{center}
                        \vspace{-4ex}
                        \caption{An illustrative example of the augmented Lagrangian-based valid equality when $n_{x}=1$.}
                        \vspace{-2ex}
                        \label{fig:example-ALR}
                    \end{figure}
                \end{example}

            \subsubsection{A Baseline Branch-and-Cut Algorithm}
                Thanks to the tightness of the Lagrangian-based valid inequalities \eqref{eq:penalty-cut2}, \eqref{eq:LR-cut2}, and \eqref{eq:ALR-cut2}, constraint \eqref{eq:optimality} can be represented by these inequalities, giving rise to a single-level MILP reformulation for the BP~\eqref{eq:bilevel}. We formalize this observation in Corollary \ref{crl:representation} and provide a proof in Appendix \ref{apd:crl:representation}.
                \begin{corollary}\label{crl:representation}
                    In the VFR \eqref{eq:VFR}, constraint \eqref{eq:optimality} can be fully described by a finite number of the Lagrangian-based inequalities \eqref{eq:penalty-cut2}, \eqref{eq:LR-cut2}, or \eqref{eq:ALR-cut2}.
                \end{corollary}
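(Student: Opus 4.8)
The plan is to establish a set equality: the feasible region carved out by constraint \eqref{eq:optimality} (within the VFR \eqref{eq:VFR}) coincides with the region carved out by the finite family obtained by instantiating \eqref{eq:penalty-cut2} at every $z \in \{0,1\}^{n_{x}}$ (and the argument runs identically for \eqref{eq:LR-cut2} or \eqref{eq:ALR-cut2}). Since $\{0,1\}^{n_{x}}$ contains exactly $2^{n_{x}}$ points, indexing one inequality per $z$ yields a \emph{finite} family, which settles the ``finite number'' part of the claim; the remaining work is to prove both inclusions.

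First I would record the forward inclusion, which is immediate from the validity already established. By Proposition \ref{pps:penalty-cut}, for every $z \in \{0,1\}^{n_{x}}$ the inequality \eqref{eq:penalty-cut2} is valid for \eqref{eq:optimality}; hence any $(x,y) \in X \times Y$ satisfying \eqref{eq:optimality} satisfies all $2^{n_{x}}$ members of the family simultaneously. Thus the feasible set of \eqref{eq:optimality} is contained in the feasible set of the family.

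For the reverse inclusion I would fix any $(x,y)$ feasible to the VFR apart from possibly \eqref{eq:optimality}, so that $x \in \{0,1\}^{n_{x}}$ and, by \eqref{eq:lower feasibility}, $x \in X_{LF}$ (whence $\phi(x)$ is finite), and assume $(x,y)$ satisfies every inequality in the family. I would then single out the particular member with $z = x$. Because $x$ is binary, $x^{\top}x = 1^{\top}x$, so with $z = x$ the penalty term satisfies $1^{\top}x + 1^{\top}z - 2x^{\top}z = 2(1^{\top}x - x^{\top}x) = 0$, and moreover $\psi(z) = \phi(z) = \phi(x)$. Consequently the member indexed by $z = x$ collapses to exactly $d_{\ell}^{\top}y \geq \phi(x)$, which is \eqref{eq:optimality}; this is precisely the tightness computation carried out in the proof of Proposition \ref{pps:penalty-cut}. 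Hence satisfying the whole family forces \eqref{eq:optimality}, giving the reverse inclusion and therefore set equality.

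The identical reasoning transfers verbatim to \eqref{eq:LR-cut2} and \eqref{eq:ALR-cut2}: both are valid for \eqref{eq:optimality} by Propositions \ref{pps:LR-cut} and \ref{pps:ALR-cut} (forward inclusion), and substituting $z = x$ makes $x - z = 0$ and the penalty term vanish, so each reduces to $d_{\ell}^{\top}y \geq \phi(x)$ (reverse inclusion). I do not anticipate a genuine obstacle here; the only point that demands care is that the reverse inclusion requires equality to hold at \emph{each} integer $x$ individually rather than merely at some single point as in Definition \ref{def:tight}. This stronger property is guaranteed precisely because the family contains a separately indexed inequality for every $z \in \{0,1\}^{n_{x}}$, so for any given $x$ the matching member $z = x$ is the one that is tight and reproduces \eqref{eq:optimality} exactly.
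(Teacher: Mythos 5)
Your proposal is correct and follows essentially the same route as the paper's proof: forward inclusion from the validity statement in Proposition \ref{pps:penalty-cut}, reverse inclusion by isolating the member indexed by $z=x$ (whose penalty/Lagrangian term vanishes so that it collapses to $d_{\ell}^{\top}y\geq\phi(x)$), and finiteness from the at most $2^{n_{x}}$ binary indices. The only cosmetic difference is that the paper indexes the family over $z\in X_{LF}$ and phrases the reverse inclusion contrapositively, whereas you index over all of $\{0,1\}^{n_{x}}$ and argue directly; these are equivalent.
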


                Therefore, one can solve the BP~\eqref{eq:bilevel} by a standard branch-and-cut algorithm by separating constraint~\eqref{eq:optimality} using inequalities \eqref{eq:penalty-cut2}, \eqref{eq:LR-cut2}, or \eqref{eq:ALR-cut2} (e.g., through \texttt{lazy constraints} in Gurobi). For completeness, we state this in Algorithm~\ref{alg:bilevel} and its finite convergence to global optimum in the following theorem. 
                \begin{theorem}
                    The bilevel MILP model~\eqref{eq:bilevel} can be solved to global optimum by solving formulation \eqref{eq:VFR} using Algorithm~\ref{alg:bilevel}. 
                \end{theorem}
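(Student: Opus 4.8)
The plan is to combine the exactness of the cut representation (Corollary~\ref{crl:representation}) with the finite-convergence guarantee of branch-and-cut for MILPs whose lazy constraints are drawn from a finite pool. Since the VFR~\eqref{eq:VFR} is equivalent to the BP~\eqref{eq:bilevel} by construction, it suffices to show that Algorithm~\ref{alg:bilevel} terminates after finitely many iterations with an optimal solution of~\eqref{eq:VFR}.

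First I would set up the cut pool. By Corollary~\ref{crl:representation}, constraint~\eqref{eq:optimality} is equivalent to the finite family of inequalities~\eqref{eq:penalty-cut2} (or~\eqref{eq:LR-cut2} or~\eqref{eq:ALR-cut2}) ranging over $z\in\{0,1\}^{n_{x}}$; there are at most $2^{n_{x}}$ of them. Let $\mathcal{F}$ denote the MILP obtained from~\eqref{eq:VFR} by replacing~\eqref{eq:optimality} with this entire family. Then $\mathcal{F}$ has the same feasible region, optimal value, and optimal solutions as~\eqref{eq:VFR}, so any optimal solution of $\mathcal{F}$ is a global optimum of the BP. Algorithm~\ref{alg:bilevel} solves the HPR (equivalently, $\mathcal{F}$ with all pool members dropped) and reintroduces members of this pool on the fly as lazy constraints whenever an integer-feasible candidate violates~\eqref{eq:optimality}.

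Next I would verify the separation step. Suppose the solver produces an integer-feasible incumbent $(\hat{x},\hat{y})$, where Assumption~\ref{asp:binary} guarantees $\hat{x}\in\{0,1\}^{n_{x}}$, and suppose it violates~\eqref{eq:optimality}, i.e., $d_{\ell}^{\top}\hat{y}<\phi(\hat{x})$. The algorithm responds by adding the cut~\eqref{eq:penalty-cut2} with $z=\hat{x}$. Evaluating this cut at $x=\hat{x}=z$ gives $1^{\top}\hat{x}+1^{\top}\hat{x}-2\hat{x}^{\top}\hat{x}=0$, so it collapses to $d_{\ell}^{\top}y\geq\phi(\hat{x})$, which the incumbent violates; hence the new cut removes $(\hat{x},\hat{y})$. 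The same collapse occurs for~\eqref{eq:LR-cut2} and~\eqref{eq:ALR-cut2} since the terms $(\hat{\lambda}(z))^{\top}(x-z)$ and $\hat{\rho}(1^{\top}x+1^{\top}z-2x^{\top}z)$ both vanish at $x=z$. By Proposition~\ref{pps:penalty-cut} (respectively Propositions~\ref{pps:LR-cut} and~\ref{pps:ALR-cut}) each such cut is valid for~\eqref{eq:optimality}, so it never removes a bilevel-feasible point. Because the cut indexed by $\hat{x}$ forces $d_{\ell}^{\top}y\geq\phi(\hat{x})$ at $x=\hat{x}$, no subsequent incumbent sharing that $\hat{x}$ can violate~\eqref{eq:optimality}; thus each binary value triggers at most one cut and at most $2^{n_{x}}$ cuts are ever appended.

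The main obstacle, and the step I would treat most carefully, is arguing finiteness of the overall branch-and-cut rather than of the cut generation alone. The cut pool being finite, only finitely many lazy constraints are appended, and between appends the solver runs ordinary branch-and-cut on a fixed MILP, which terminates finitely. Once every necessary pool member has been added, the active formulation agrees with $\mathcal{F}$ on all integer points, so no further violation of~\eqref{eq:optimality} is possible and the solver returns an optimal solution of $\mathcal{F}$. By the equivalence established in the first step, this solution is a global optimum of the BP~\eqref{eq:bilevel}, completing the argument.
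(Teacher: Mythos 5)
Your proof is correct and follows exactly the route the paper intends: the paper states this theorem without a separate formal proof, deferring to Corollary~\ref{crl:representation} (finitely many tight, valid cuts fully describing~\eqref{eq:optimality}) plus the standard finite-convergence argument for branch-and-cut with a finite lazy-cut pool. Your added verification that the cut generated at $z=\hat{x}$ actually separates the violating incumbent (because the penalty/Lagrangian terms vanish at $x=z$, by tightness) is a detail the paper leaves implicit but is the right thing to check.
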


                \begin{algorithm}[!h]
                    \caption{A branch-and-cut algorithm using Lagrangian-based valid inequalities}
                    \label{alg:bilevel}
                    \KwIn{
                        Formulation \eqref{eq:upper objective}--\eqref{eq:lower feasibility}, Lagrangian coefficients $\hat{\rho}$ and $U/L$.
                    }
                    Initialize a queue $\mathcal{Q}$ for formulations, insert the continuous relaxation of \eqref{eq:upper objective}--\eqref{eq:lower feasibility} into $\mathcal{Q}$\;
                    Set upper bound $ub\leftarrow\infty$, optimal solution $(x^*,y^*)$\;
                    \While{$\mathcal{Q}$ is non-empty}{
                        Extract a formulation from $\mathcal{Q}$\;
                        Solve the formulation and obtain an incumbent solution $(\hat{x},\hat{y})$ with optimal value $\hat{f}$\;
                        \If{$\hat{f}<ub$ {\it and} $(\hat{x},\hat{y}) \in X \times Y$}{
                            Solve \eqref{eq:value function} to obtain $\phi(\hat{x})$\;
                            \If{$(\hat{x},\hat{y})$ violates \eqref{eq:optimality}}{
                                Incorporate valid inequality \eqref{eq:penalty-cut2}, \eqref{eq:LR-cut2}, or \eqref{eq:ALR-cut2} with $z=\hat{x}$ into the formulation and all formulations in $\mathcal{Q}$\;
                                Insert the formulation back into $\mathcal{Q}$\;
                            }
                            \Else{
                                Update $ub\leftarrow\hat{f}$ and $(x^*,y^*)\leftarrow(\hat{x},\hat{y})$\;
                            }
                        }
                        \ElseIf{$\hat{f}<ub$ and $(\hat{x},\hat{y}) \notin X \times Y$}{
                            Branch on $(\hat{x},\hat{y})$ and insert the two ensuring formulations into $\mathcal{Q}$\;
                        }
                    }
                    \KwOut{
                        Optimal solution $(x^*,y^*)$ and optimal value $ub$.
                    }
                \end{algorithm}

        \subsection{Selection of Coefficients}
            Though the above inequalities \eqref{eq:penalty-cut2}, \eqref{eq:LR-cut2}, and \eqref{eq:ALR-cut2} are all valid and tight for the lower-level optimality condition \eqref{eq:optimality}, their strengths are affected by coefficients $\hat{\rho}$, $U$, and $L$, which further influences the overall performance of the related branch-and-cut algorithm for solving the BP in \eqref{eq:bilevel}.
            We discuss how to select these coefficients in the following.

            \subsubsection{Exact Calculation}
                For brevity, we rewrite the Lagrangian relaxations, \eqref{eq:penalty}, \eqref{eq:LR}, and \eqref{eq:ALR}, in a general form as
                \begin{equation}\label{eq:GLR}
                    L(x, \lambda,\rho)=\max_{z\in[0,1]^{n_{x}}}~\bar{L}(x,\lambda,\rho,z),
                \end{equation}
                where $\bar{L}(x,\lambda,\rho,z)$ refers to the specific objective function in \eqref{eq:penalty}, \eqref{eq:LR}, or \eqref{eq:ALR}.
                \begin{lemma}\label{lem:solution}
                    The right-hand side of \eqref{eq:GLR} has a binary-valued optimal solution.
                \end{lemma}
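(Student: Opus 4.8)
The plan is to observe that, for fixed $(x,\lambda,\rho)$, the objective $\bar{L}(x,\lambda,\rho,z)$ is \emph{multilinear} in $z$ — that is, affine in each individual coordinate $z_{i}$ when the remaining coordinates are held fixed — and then to invoke the standard fact that a multilinear function over the box $[0,1]^{n_{x}}$ attains its maximum at a vertex, i.e., at a binary point. The argument for the latter is a coordinate-wise rounding of any optimal solution.

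First, I would verify the multilinearity of each of the three objectives. By construction, the extension $\psi(z)$ is chosen so that, with other entries fixed, it is linear in each individual $z_{i}$; this is exactly the structural requirement imposed when $\psi$ is introduced, and the displayed multilinear extension satisfies it. For the penalty term, I would write $1^{\top}x+1^{\top}z-2x^{\top}z=\sum_{i\in[n_{x}]}\bigl(x_{i}+z_{i}(1-2x_{i})\bigr)$, which for fixed $(x,\rho)$ is affine (indeed separable and affine) in each $z_{i}$. For the Lagrangian term, I would write $-\lambda^{\top}(x-z)=-\sum_{i\in[n_{x}]}\lambda_{i}(x_{i}-z_{i})$, which for fixed $(x,\lambda)$ is likewise affine in each $z_{i}$. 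Since the augmented Lagrangian objective \eqref{eq:ALR} is the sum of $\psi(z)$ with both of these terms, and \eqref{eq:penalty} and \eqref{eq:LR} are special cases, it follows that in all three cases $\bar{L}(x,\lambda,\rho,z)$ is affine in each $z_{i}$ with the other coordinates fixed.

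Second, I would run the rounding argument. Let $z^{*}$ be any optimal solution to \eqref{eq:GLR} (the degenerate case in which the maximum equals $-\infty$, arising only when every feasible $z$ yields $\psi(z)=-\infty$, is trivial). Processing coordinate $i=1$: holding $z^{*}_{-1}$ fixed, $\bar{L}$ is an affine function of $z_{1}\in[0,1]$, and an affine function on an interval attains its maximum at an endpoint, so replacing $z^{*}_{1}$ by whichever of $0$ or $1$ gives the larger value does not decrease the objective and preserves feasibility. This produces a new optimal solution with $z_{1}\in\{0,1\}$. Crucially, fixing $z_{1}$ to a binary value leaves the function affine in each of the remaining coordinates, so I can repeat the same step for $i=2,\ldots,n_{x}$; after $n_{x}$ iterations I obtain an optimal solution $\hat{z}\in\{0,1\}^{n_{x}}$.

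The argument is essentially routine, so there is no deep obstacle; the only point requiring care is the induction book-keeping, namely confirming that each coordinate rounding simultaneously (i) does not decrease the objective and (ii) retains the affine-in-each-remaining-coordinate structure needed for the next step. Because the rounding is performed one coordinate at a time and each endpoint comparison involves only the two boundary values of an affine function, both properties are immediate, and the induction closes cleanly. A secondary technicality is the treatment of infinite values of $\psi$ (when some $\phi(x)=-\infty$): here I would note that the endpoint comparison only ever involves boundary values of $z_{i}$, so the rounding never needs to evaluate the objective at an interior point and the argument is unaffected.
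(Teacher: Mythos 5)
Your proof is correct and rests on exactly the same key observation as the paper's: $\bar{L}(x,\lambda,\rho,z)$ is affine in each individual $z_{i}$ (because $\psi$ is built to be, and the penalty/Lagrangian terms manifestly are), so each coordinate can be pushed to an endpoint without decreasing the objective. The paper phrases this as a proof by contradiction on a fractional coordinate of an optimal $z^{*}$, whereas you round coordinates constructively one at a time; this is a cosmetic difference (and your handling of the degenerate $a=0$ and $-\infty$ cases is, if anything, slightly cleaner than the paper's).
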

                \begin{proof}\color{black}
                    We denote $z^{*}$ as the optimal solution to the right-hand side of \eqref{eq:GLR}.
                    Then, we suppose the contrary that there exists $i\in[n_{x}]$ such that $0<z_{i}^{*}<1$.
                    Note that $\psi(z)$ is linear in each individual $z_{i},~\forall i\in[n_{x}]$.
                    In \eqref{eq:penalty}, \eqref{eq:LR}, or \eqref{eq:ALR}, the penalty or Lagrangian term is also linear in each individual $z_{i},~\forall i\in[n_{x}]$.
                    Hence, we have
                    \begin{equation*}
                        \bar{L}(x,\lambda,\rho,z)=a(x,\lambda,\rho,z_{-i})z_{i}+b(x,\lambda,\rho,z_{-i})
                    \end{equation*}
                    where $a(\cdot)$ and $b(\cdot)$ are functions of $x$, $\lambda$, $\rho$, and $z_{-i}$.
                    We use $a(z_{-i})$ and $b(z_{-i})$ for brevity.\\
                    When $a(z^{*}_{-i})>0$, because $z_{i}^{*}<1$, we have
                    \begin{equation*}
                        \bar{L}(x,\lambda,\rho,z_{-i}^{*},z_{i}^{*})=a(z_{-i}^{*})z_{i}+b(z_{-i}^{*})<a(z_{-i}^{*})\times1+b(z_{-i}^{*})=\bar{L}(x,\lambda,\rho,z_{-i}^{*},1),
                    \end{equation*}
                    which contradicts with that $z^{*}$ is optimal.\\
                    When $a(z^{*}_{-i})<0$, because $z_{i}^{*}>0$, we have
                    \begin{equation*}
                        \bar{L}(x,\lambda,\rho,z_{-i}^{*},z_{i}^{*})=a(z_{-i}^{*})z_{i}+b(z_{-i}^{*})<a(z_{-i}^{*})\times0+b(z_{-i}^{*})=\bar{L}(x,\lambda,\rho,z_{-i}^{*},0),
                    \end{equation*}
                    which contradicts with that $z^{*}$ is optimal.\\
                    When $a(z^{*}_{-i})=0$, we have
                    \begin{equation*}
                        \bar{L}(x,\lambda,\rho,z_{-i}^{*},z_{i}^{*})=b(z_{-i}^{*})=\bar{L}(x,\lambda,\rho,z_{-i}^{*},0)=\bar{L}(x,\lambda,\rho,z_{-i}^{*},1),
                    \end{equation*}
                    which contradicts with that $z_{i}^{*}\notin\{0,1\}$.
                    This completes the proof.
                \end{proof}

                Based on Lemma \ref{lem:solution}, we state the selection of $\hat{\rho}$ and $U/L$ in Proposition \ref{pps:selection}, whose proof is placed in Appendix \ref{apd:pps:selection}.
                \begin{proposition}\label{pps:selection}
                    The following $\hat{\rho}$ and $U/L$ can be used to construct corresponding valid inequalities:\\
                    (1) To construct the penalty-based valid inequality \eqref{eq:penalty-cut2}, the following $\hat{\rho}$ is sufficiently large:
                    \begin{equation}\label{eq:selection-penalty}
                        \begin{aligned}
                            \hat{\rho}=\max_{z,z'\in\{0,1\}^{n_{x}}}&~\phi(z)-\phi(z')\\
                            \text{s.t.}&~\|z-z'\|_{2}^{2}=1.
                        \end{aligned}
                    \end{equation}
                    (2) To construct the Lagrangian-based valid inequality \eqref{eq:LR-cut2}, the following $U$ and $L$ are sufficiently large and sufficiently small, respectively:
                    \begin{equation}\label{eq:selection-LR}
                        \text{For all}~i\in[n_{x}],
                        \left\{\begin{aligned}
                            \begin{aligned}
                                L_{i}=\min_{z,z'\in\{0,1\}^{n_{x}}}&~\phi(z)-\phi(z')\\
                                \text{s.t.}&~z_{-i}=z'_{-i},z_{i}=0,z'_{i}=1
                            \end{aligned}\\
                            \begin{aligned}
                                U_{i}=\max_{z,z'\in\{0,1\}^{n_{x}}}&~\phi(z)-\phi(z')\\
                                \text{s.t.}&~z_{-i}=z'_{-i},z_{i}=0,z'_{i}=1
                            \end{aligned}
                        \end{aligned}\right.
                    \end{equation}
                    (3) To construct the augmented Lagrangian-based valid inequality \eqref{eq:ALR-cut2}, any $\hat{\rho}\geq0$ and the following $U$ and $L$ are sufficient:
                    \begin{equation}\label{eq:selection-ALR}
                        \text{For all}~i\in[n_{x}],
                        \left\{\begin{aligned}
                            \begin{aligned}
                                L_{i}=\hat{\rho}+\min_{z,z'\in\{0,1\}^{n_{x}}}&~\phi(z)-\phi(z')\\
                                \text{s.t.}&~z_{-i}=z'_{-i},z_{i}=0,z'_{i}=1
                            \end{aligned}\\
                            \begin{aligned}
                                U_{i}=-\hat{\rho}+\max_{z,z'\in\{0,1\}^{n_{x}}}&~\phi(z)-\phi(z')\\
                                \text{s.t.}&~z_{-i}=z'_{-i},z_{i}=0,z'_{i}=1
                            \end{aligned}
                        \end{aligned}\right.
                    \end{equation}
                \end{proposition}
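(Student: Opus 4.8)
The plan is to reduce the validity claim in each of the three parts to a single discrete-Lipschitz inequality for $\phi$, and then establish that inequality by a telescoping argument along single-coordinate flips. First I would note that, by Propositions \ref{pps:penalty-cut}, \ref{pps:LR-cut}, and \ref{pps:ALR-cut} together with Corollaries \ref{crl:penalty}, \ref{crl:LR}, and \ref{crl:ALR}, it suffices to show that the explicit coefficients make the corresponding Lagrangian relaxation tight, i.e.\ that the relevant $L(x,\lambda,\rho)$ equals $\phi(x)$ for every $x\in X_{LF}$. Since weak duality already gives $\phi(x)\le L(x,\lambda,\rho)$, only the reverse inequality remains. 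Invoking Lemma \ref{lem:solution}, the inner maximum in \eqref{eq:penalty}, \eqref{eq:LR}, or \eqref{eq:ALR} is attained at a binary $z$, where $\psi(z)=\phi(z)$ and $1^{\top}x+1^{\top}z-2x^{\top}z=\|x-z\|_{1}$ equals the Hamming distance between $x$ and $z$; moreover any $z\notin X_{LF}$ has $\phi(z)=-\infty$ and is irrelevant to the maximum. Thus each part collapses to a finite family of scalar inequalities over $x,z\in X_{LF}\cap\{0,1\}^{n_{x}}$.

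Writing these out, part (2) requires
\[
\phi(x)-\phi(z)\ge\sum_{i:\,z_{i}=1,\,x_{i}=0}L_{i}-\sum_{i:\,z_{i}=0,\,x_{i}=1}U_{i},\qquad\forall x,z\in X_{LF}\cap\{0,1\}^{n_{x}}.
\]
The core step is then a telescoping bound: order the coordinates on which $x$ and $z$ differ and flip them one at a time, producing a path $z=w^{(0)},\dots,w^{(k)}=x$ with $k=\|x-z\|_{1}$. Each flip of a coordinate $i$ changes $\phi$ by $\phi(0,w_{-i})-\phi(1,w_{-i})$ up to sign, a quantity that by \eqref{eq:selection-LR} lies in $[L_{i},U_{i}]$ for \emph{every} configuration $w_{-i}$ of the remaining coordinates; summing the per-step increments yields exactly the displayed inequality, so the explicit $U,L$ of part (2) are valid. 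Part (1) is the uniform specialization $U_{i}=\hat{\rho}$, $L_{i}=-\hat{\rho}$: the choice \eqref{eq:selection-penalty} is precisely the largest single-flip change of $\phi$, so the same telescoping gives $\phi(z)-\phi(x)\le\hat{\rho}\|x-z\|_{1}$. Part (3) reduces to part (2) because the $\pm\hat{\rho}$ shifts added to $U,L$ in \eqref{eq:selection-ALR} cancel the extra penalty term $\hat{\rho}(1^{\top}x+1^{\top}z-2x^{\top}z)=\hat{\rho}\|x-z\|_{1}$ flip-by-flip, leaving the same Lagrangian inequality for any $\hat{\rho}\ge0$.

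The main obstacle is that the telescoping path between two feasible points $x,z\in X_{LF}$ may pass through an intermediate $w^{(j)}\notin X_{LF}$, where $\phi(w^{(j)})=-\infty$ and the per-step increments become ill-defined. I would handle this in two regimes. When the lower level has (relatively) complete recourse so that $\phi$ is finite on all of $\{0,1\}^{n_{x}}$, every intermediate increment is finite and the argument is unconditional. In the general case, any coordinate $i$ whose flip can reach an infeasible configuration forces $U_{i}=+\infty$ or $L_{i}=-\infty$ through the max/min in \eqref{eq:selection-LR}; since such an $i$ necessarily lies among the differing coordinates, its contribution drives the right-hand side of the target inequality to $-\infty$, so the inequality holds vacuously while the coefficient remains ``sufficiently large'' in the limiting sense guaranteed by Corollaries \ref{crl:LR} and \ref{crl:ALR}. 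Making this dichotomy precise---ideally by choosing a flip order that preserves lower-level feasibility whenever the endpoints are feasible---is the one place where care beyond routine telescoping is needed.
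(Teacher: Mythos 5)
Your proposal is correct and follows essentially the same route as the paper's proof in Appendix \ref{apd:pps:selection}: both invoke Lemma \ref{lem:solution} to restrict the inner maximization to binary $z$, reduce tightness of the relaxation to single-coordinate-flip bounds on $\phi$ (the paper's condition \eqref{eq:impose}), and read off \eqref{eq:selection-penalty}--\eqref{eq:selection-ALR} from the worst case over all flips, with part (3) obtained by shifting $U,L$ by $\pm\hat{\rho}$. Your telescoping along a flip path merely makes explicit why the paper's per-coordinate condition implies global optimality of $z=x$, and your caveat about infeasible intermediate points (where $\phi=-\infty$) is a real subtlety that the paper's own proof also leaves unaddressed, so it does not constitute a departure.
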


                If we select $\hat{\rho}$ and $U/L$ according to Proposition \ref{pps:selection}, the three types of valid inequalities can be connected through the following corollary.
                \begin{corollary}\label{crl:relationship}
                    When we select $\hat{\rho}$ and $U/L$ according to Proposition \ref{pps:selection}, the following claims hold:\\
                    (i) the Lagrangian-based inequality \eqref{eq:LR-cut2} is stronger than or equivalent to the penalty-based inequality \eqref{eq:penalty-cut2}, and\\
                    (ii) the augmented Lagrangian-based inequality \eqref{eq:ALR-cut2} is equivalent to the Lagrangian-based inequality \eqref{eq:LR-cut2}.
                \end{corollary}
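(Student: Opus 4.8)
The plan is to fix an arbitrary $z\in\{0,1\}^{n_x}$ and compare the three right-hand sides as functions of $x\in\{0,1\}^{n_x}$, exploiting that each right-hand side decomposes additively across coordinates. For brevity I introduce, for each $i\in[n_x]$, the two directional gaps
\[
\delta_i^{-}:=\min_{\substack{w,w'\in\{0,1\}^{n_x}\\ w_{-i}=w'_{-i},\,w_i=0,\,w'_i=1}}\bigl(\phi(w)-\phi(w')\bigr),\qquad
\delta_i^{+}:=\max_{\substack{w,w'\in\{0,1\}^{n_x}\\ w_{-i}=w'_{-i},\,w_i=0,\,w'_i=1}}\bigl(\phi(w)-\phi(w')\bigr),
\]
so that the Lagrangian selection \eqref{eq:selection-LR} reads $L_i=\delta_i^{-},\,U_i=\delta_i^{+}$, the augmented selection \eqref{eq:selection-ALR} reads $L_i=\hat{\rho}+\delta_i^{-},\,U_i=-\hat{\rho}+\delta_i^{+}$, and the penalty coefficient from \eqref{eq:selection-penalty}, which I call $\hat{\rho}_p$, equals $\max_{i}\max\{\delta_i^{+},-\delta_i^{-}\}$, since $\|z-z'\|_2^2=1$ forces $z,z'$ to differ in exactly one coordinate and both flip-orderings are permitted in the maximization. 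The first structural observation is that, because $x_i,z_i\in\{0,1\}$, we have $x_i+z_i-2x_iz_i=|x_i-z_i|$, so each penalty term splits as $\hat{\rho}\sum_i|x_i-z_i|$, while the Lagrangian term splits as $\sum_i\hat{\lambda}_i(z)(x_i-z_i)$ with $\hat{\lambda}_i(z)=L_i$ if $z_i=1$ and $\hat{\lambda}_i(z)=U_i$ if $z_i=0$. Consequently every right-hand side equals $\phi(z)$ plus a sum of per-coordinate contributions, and the contribution of coordinate $i$ vanishes whenever $x_i=z_i$; it therefore suffices to treat, for each $i$, the single nontrivial case $x_i\neq z_i$.

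For part (ii) I would carry out this comparison for the augmented versus the plain Lagrangian inequality. When $z_i=1,x_i=0$ the augmented contribution is $-L_i(x_i-z_i)-\hat{\rho}|x_i-z_i|=(\hat{\rho}+\delta_i^{-})-\hat{\rho}=\delta_i^{-}$, which is exactly the plain-Lagrangian contribution $-L_i(x_i-z_i)=\delta_i^{-}$ under \eqref{eq:selection-LR}; symmetrically, when $z_i=0,x_i=1$ the augmented contribution is $-U_i-\hat{\rho}=-(-\hat{\rho}+\delta_i^{+})-\hat{\rho}=-\delta_i^{+}$, matching the plain-Lagrangian contribution $-U_i=-\delta_i^{+}$. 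Thus the $\hat{\rho}$-penalty is exactly absorbed by the $\mp\hat{\rho}$ shift built into the augmented choice of $U,L$, so the two right-hand sides coincide for every $x\in\{0,1\}^{n_x}$ and every admissible $\hat{\rho}\ge0$, establishing the claimed equivalence.

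For part (i) I would show that the plain-Lagrangian contribution dominates the penalty contribution coordinatewise. When $z_i=1,x_i=0$ the two contributions are $\delta_i^{-}$ (Lagrangian) and $-\hat{\rho}_p$ (penalty), and $\delta_i^{-}\ge-\hat{\rho}_p$ is equivalent to $-\delta_i^{-}\le\hat{\rho}_p$, which holds because $\hat{\rho}_p\ge\max\{\delta_i^{+},-\delta_i^{-}\}\ge-\delta_i^{-}$. When $z_i=0,x_i=1$ they are $-\delta_i^{+}$ (Lagrangian) and $-\hat{\rho}_p$ (penalty), and $-\delta_i^{+}\ge-\hat{\rho}_p$ follows from $\hat{\rho}_p\ge\delta_i^{+}$. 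Summing over $i$ shows the Lagrangian right-hand side is pointwise at least the penalty right-hand side; since both inequalities read $d_\ell^{\top}y\ge(\text{right-hand side})$, a pointwise-larger right-hand side is the more restrictive lower bound and hence the stronger valid inequality, with equality precisely when $\hat{\rho}_p$ is attained in the relevant direction at every coordinate where $x_i\neq z_i$.

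The only genuine obstacle is sign bookkeeping: the penalty coefficient $\hat{\rho}_p$ is a single scalar obtained as a maximum over both coordinates and both flip directions, whereas the Lagrangian uses the two directional gaps $\delta_i^{-}$ and $\delta_i^{+}$ separately. Getting each inequality oriented correctly — recognizing that the flip $z_i=1\to x_i=0$ is governed by $\delta_i^{-}$ (hence controlled by $-\delta_i^{-}\le\hat{\rho}_p$) while $z_i=0\to x_i=1$ is governed by $\delta_i^{+}$ (controlled by $\delta_i^{+}\le\hat{\rho}_p$) — is the crux. Once the coordinatewise reduction and these identifications are in place, both claims follow immediately, and the strictness in (i) is visible because the Lagrangian replaces the single worst-case scalar $\hat{\rho}_p$ by the coordinate- and direction-specific gaps.
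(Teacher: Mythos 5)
Your proof is correct and takes essentially the same route as the paper's: both arguments rest on the observation that $\hat{\rho}=\max_{i}\max\{-L_{i},U_{i}\}$ together with a coordinatewise comparison of the right-hand sides, with your directional gaps $\delta_i^{-},\delta_i^{+}$ playing exactly the roles of $L_i,U_i$ and your absorption of the $\hat{\rho}$-penalty into the shifted coefficients matching the paper's identity $U_a+\hat{\rho}=U_\ell$, $L_a-\hat{\rho}=L_\ell$. The only difference is presentational — you argue by explicit case analysis on $(z_i,x_i)$ where the paper uses the equivalent vectorized algebraic identities.
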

                \begin{proof}\color{black}
                    Comparing \eqref{eq:selection-penalty} and \eqref{eq:selection-LR}, we have
                    \begin{equation*}
                        \hat{\rho}=\max_{i\in[n_{x}]}\{\max\{-L_{i},U_{i}\}\}.
                    \end{equation*}
                    Hence, the right-hand side of \eqref{eq:LR-cut2} can be relaxed as
                    \begin{equation*}
                        \begin{aligned}
                            \phi(z)-(\hat{\lambda}(z))^{\top}(x-z)=&\phi(z)-\sum_{i\in[n_{x}]}\big(U_{i}(1-z_{i})+L_{i}z_{i}\big)(x_{i}-z_{i})\\
                            \geq&\phi(z)-\sum_{i\in[n_{x}]}\big(\hat{\rho}(1-z_{i})-\hat{\rho}z_{i}\big)(x_{i}-z_{i})\\
                            =&\phi(z)-\hat{\rho}\left(1^{\top}x+1^{\top}z-2x^{\top}z\right).
                        \end{aligned}
                    \end{equation*}
                    The above inequality holds because
                    \begin{equation*}
                        \big(\hat{\rho}(1-z_{i})-\hat{\rho}z_{i}\big)(x_{i}-z_{i})-\big(U_{i}(1-z_{i})+L_{i}z_{i}\big)(x_{i}-z_{i})=(\hat{\rho}-U_{i})x_{i}(1-z_{i})+(\hat{\rho}+L_{i})(1-x_{i})z_{i}\geq0.
                    \end{equation*}
                    Hence, \eqref{eq:LR-cut2} at least as strong as \eqref{eq:penalty-cut2}.

                    To distinguish $U/L$ in \eqref{eq:selection-LR} and \eqref{eq:selection-ALR}, we denote $U/L$ in \eqref{eq:selection-LR} as $U_{\ell}/L_{\ell}$ and denote $U/L$ in \eqref{eq:selection-ALR} as $U_{a}/L_{a}$.
                    Comparing \eqref{eq:selection-LR} and \eqref{eq:selection-ALR}, we can see that $U_{a}+\hat{\rho}=U_{\ell}$ and $L_{a}-\hat{\rho}=L_{\ell}$.
                    Then, we expand the right-hand side of \eqref{eq:ALR-cut2} as
                    \begin{equation*}
                        \begin{aligned}
                            &\phi(z)-\big(U_{a}\odot(1-z)+L_{a}\odot z\big)(x-z)-\hat{\rho}\left(1^{\top}x+1^{\top}z-2x^{\top}z\right)\\
                            =&\phi(z)-\big((U_{a}+\hat{\rho})\odot(1-z)+(L_{a}-\hat{\rho})\odot z\big)(x-z)\\
                            =&\phi(z)-\big(U_{\ell}\odot(1-z)+L_{\ell}\odot z\big)(x-z).
                        \end{aligned}
                    \end{equation*}
                    Hence, \eqref{eq:ALR-cut2} is equivalent to \eqref{eq:LR-cut2}.
                \end{proof}

                In view of claim (ii) in Corollary \ref{crl:relationship}, we only discuss the penalty-based valid inequality \eqref{eq:penalty-cut2} and the Lagrangian-based valid inequality \eqref{eq:LR-cut2} in the remainder of the paper.

            \subsubsection{Quick Calculation}
                In practical implementation, deriving $\hat{\rho}$ or $U/L$ using Proposition \ref{pps:selection} requires solving max-min or min-max problems, which is computationally expensive or even intractable when $y$ involves integer variables.
                In light of this issue, we adopt relaxations for tractable and quick calculation of $\hat{\rho}$ and $U/L$, which is presented in Proposition \ref{pps:selection-quick}, whose proof is placed in Appendix \ref{apd:pps:selection-quick}.
                \begin{proposition}\label{pps:selection-quick}
                    The following $\hat{\rho}$ and $U/L$ can be used to construct corresponding valid inequalities:\\
                    (1) To construct the penalty-based valid inequality \eqref{eq:ALR-cut2}, the following $\hat{\rho}$ is sufficiently large.
                    \begin{equation}\label{eq:selection-penalty-quick}
                        \begin{aligned}
                            \hat{\rho}=\max~&d_{\ell}^{\top}y-d_{\ell}^{\top}y'\\
                            \text{s.t.~}&z,z'\in\{0,1\}^{n_{x}},y,y'\in Y\\
                            &1^{\top}z+1^{\top}z'-2\times1^{\top}\gamma=1\\
                            &0\leq\gamma\leq z,~z+z'-1\leq\gamma\leq z'\\
                            &B_{\ell}y\leq h_{\ell}-A_{\ell}z\\
                            &B_{\ell}y'\leq h_{\ell}-A_{\ell}z'
                        \end{aligned}
                    \end{equation}
                    Here, $\gamma\in\mathbb{R}^{n_{x}}$ is an auxiliary variable.\\
                    (2) To construct the Lagrangian-based valid inequality \eqref{eq:LR-cut2}, the following $U$ and $L$ are sufficiently large and sufficiently small, respectively.
                    \begin{equation}\label{eq:selection-LR-quick}
                        \text{For all}~i\in[n_{x}],
                        \left\{\begin{aligned}
                            \begin{aligned}
                                L_{i}=\min_{z,z'\in\{0,1\}^{n_{x}},y,y'\in Y}&~d_{\ell}^{\top}y-d_{\ell}^{\top}y'\\
                                \text{s.t.~}&~z_{-i}=z'_{-i},z_{i}=0,z'_{i}=1\\
                                &B_{\ell}y\leq h_{\ell}-A_{\ell}z,B_{\ell}y'\leq h_{\ell}-A_{\ell}z'
                            \end{aligned}\\
                            \begin{aligned}
                                U_{i}=\max_{z,z'\in\{0,1\}^{n_{x}},y,y'\in Y}&~d_{\ell}^{\top}y-d_{\ell}^{\top}y'\\
                                \text{s.t.~}&~z_{-i}=z'_{-i},z_{i}=0,z'_{i}=1\\
                                &B_{\ell}y\leq h_{\ell}-A_{\ell}z,B_{\ell}y'\leq h_{\ell}-A_{\ell}z'
                            \end{aligned}
                        \end{aligned}\right.
                    \end{equation}
                \end{proposition}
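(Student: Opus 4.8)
The plan is to prove Proposition~\ref{pps:selection-quick} by reduction to the exact coefficients of Proposition~\ref{pps:selection}, using two facts: (i) the quick formulas dominate the exact ones in the right direction---namely a larger $\hat{\rho}$, a larger $U$, and a smaller $L$---and (ii) validity of the penalty-based and Lagrangian-based inequalities is monotone in these coefficients, so any dominating choice remains valid. The core of fact (i) is to recognize that the exact objectives in \eqref{eq:selection-penalty} and \eqref{eq:selection-LR} are differences of value functions $\phi(z)-\phi(z')$, where each value function is itself a maximization $\phi(z)=\max\{d_{\ell}^{\top}y:B_{\ell}y\leq h_{\ell}-A_{\ell}z\}$. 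Folding this inner maximization into the outer optimization over $z$ turns the exact $\max_{z}\phi(z)$ into a joint maximization over $(z,y)$, which is exact; the subtlety lies entirely in the term $-\phi(z')$.

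First I would treat the $U/L$ bounds. For $U_i$ (a maximization), maximizing $d_{\ell}^{\top}y$ over $y$ feasible for $z$ recovers $\phi(z)$ exactly, but maximizing $-d_{\ell}^{\top}y'$ over $y'$ feasible for $z'$ produces $-\min_{y'}d_{\ell}^{\top}y'$ rather than $-\phi(z')=-\max_{y'}d_{\ell}^{\top}y'$. Since $\min_{y'}d_{\ell}^{\top}y'\leq\phi(z')$, this relaxes the objective upward, giving quick $U_i\geq$ exact $U_i$. By the symmetric argument, for $L_i$ (a minimization) the inner problem over $y$ yields $\min_{y}d_{\ell}^{\top}y\leq\phi(z)$ while the inner problem over $y'$ correctly yields $\phi(z')$, so quick $L_i\leq$ exact $L_i$. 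Thus the quick $U$ is at least as large, and the quick $L$ at least as small, as their exact counterparts.

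Next I would handle the penalty coefficient, where an extra ingredient is the linearization of $\|z-z'\|_2^2=1$. For binary $z,z'$ one has $\|z-z'\|_2^2=1^{\top}z+1^{\top}z'-2\sum_i z_i z_i'$, and I would verify that the auxiliary variable $\gamma$ with bounds $0\leq\gamma\leq z$ and $z+z'-1\leq\gamma\leq z'$ forces $\gamma_i=z_i z_i'$ (the standard AND-linearization, exact for $z_i,z_i'\in\{0,1\}$), so that $1^{\top}z+1^{\top}z'-2\cdot1^{\top}\gamma=1$ is equivalent to $\|z-z'\|_2^2=1$. The same relaxation as above---maximizing $-d_{\ell}^{\top}y'$ yields $-\min_{y'}d_{\ell}^{\top}y'$ in place of $-\phi(z')$---then gives quick $\hat{\rho}\geq$ exact $\hat{\rho}$.

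Finally I would close the validity argument by monotonicity. By Corollary~\ref{crl:penalty}, the penalty inequality is valid for every $\rho$ above the exact threshold, so a larger quick $\hat{\rho}$ is \emph{a fortiori} valid. For the Lagrangian inequality, the cut \eqref{eq:LR-cut2} uses the multiplier whose $i$-th entry equals $U_i$ when $x_i=1$ and $L_i$ when $x_i=0$; replacing $U_i$ by a larger value and $L_i$ by a smaller one keeps this multiplier inside the region \eqref{eq:LR-strong-condition2} of Corollary~\ref{crl:LR}, under which $\phi(x)=L_{\ell}(x,\lambda)$ and hence the inequality remains valid. I expect the main obstacle to be bookkeeping the direction of each relaxation---in particular that maximizing over $y'$ inside a $-\phi(z')$ term becomes a \emph{minimization}, flipping the sign so that the bound moves in the direction that preserves rather than destroys validity---together with noting that both the exact and quick formulations are implicitly restricted to $z,z'\in X_{LF}$, so the value functions are finite and the comparison is well-posed.
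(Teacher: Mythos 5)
Your proposal is correct and follows essentially the same route as the paper's proof: relax the exact max-min/min-max formulas of Proposition~\ref{pps:selection} by turning the inner adversarial maximization over $y'$ (respectively $y$) into a cooperative one, check that each replacement moves $\hat{\rho}$ up, $U$ up, and $L$ down, and conclude by the monotonicity built into Corollaries~\ref{crl:penalty} and~\ref{crl:LR}. Your explicit verification that the $\gamma$-constraints are an exact AND-linearization of $\|z-z'\|_2^2=1$ for binary $z,z'$ is a detail the paper leaves implicit, and is a welcome addition.
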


                Through Proposition \ref{pps:selection-quick}, the coefficients can be calculated by solving MILPs and can be found by off-the-shelf solvers, such as Gurobi.
                Though the calculated $\hat{\rho}$ and $U/L$ from Proposition \ref{pps:selection-quick} can also lead to valid and tight inequalities, their strengths can be weaker than those from Proposition \ref{pps:selection}.
                In the following sections, we will discuss how to quickly find valid and strong coefficients under special and general problem structures.

    \section{Special Property-based Valid Inequalities}\label{sec:special}
        Real-world sequential optimization problems often have specific structures, some of which may induce special properties and help the computation of related bilevel MILPs.
        In this section, we focus on cases where the lower-level value function $\phi(x)$ has such special properties and explore the benefit from these special properties in solving BPs.

        In light of our Assumption \ref{asp:binary} that all linking variables are binary-valued, submodularity or supermodularity is a promising property in solving integer programs \cite{qi2021sequential}.
        We recall the definitions of submodularity and supermodularity as follows.
        \begin{definition}[\cite{simchi2014convexity}]\label{def:property}
            Consider a function $f: \{0,1\}^{n_{x}} \rightarrow \mathbb{R}$.
            Then, \\
            (i) $f$ is called submodular if $f(x') + f(x'') \geq f(x' \vee x'') + f(x' \wedge x'')$ for all $x', x'' \in \{0,1\}^{n_{x}} $.\\
            (ii) $f$ is called supermodular if $f(x') + f(x'') \leq f(x' \vee x'') + f(x' \wedge x'')$ for all $x', x'' \in \{0,1\}^{n_{x}} $.
        \end{definition}
        Refs. \cite{chen2021preservation, long2024supermodularity} have identified many applications with submodularity or supermodularity, including the newsvendor problem, uncapacitated facility location, lot sizing, appointment scheduling, and assemble-to-order.
        By exploiting the submodularity or supermodularity of $\phi(x)$, we strengthen the above Lagrangian-based valid inequalities. 

        \subsection{Efficient and Exact Calculation of Lagrangian Coefficients}
            When $\phi(x)$ is submodular or supermodular, the calculation of $\hat{\rho}$ and $U_{i}/L_{i}$ in Proposition \ref{pps:selection} can be significantly simplified, as presented in Proposition \ref{pps:selection-special}, whose proof is placed in Appendix \ref{apd:pps:selection-special}.
            \begin{proposition}\label{pps:selection-special}
                The following $\hat{\rho}$ and $U/L$ are sufficient to construct the penalty-based valid inequality \eqref{eq:penalty-cut2} or the Lagrangian-based valid inequalities \eqref{eq:LR-cut2}:\\
                (1) If $\phi(x)$ is submodular in $x\in\{0,1\}^{n_{x}}$, then for any $i\in[n_{x}]$, formulations \eqref{eq:selection-penalty} and \eqref{eq:selection-LR} admit the following closed-form solutions:
                \begin{equation}\label{eq:selection-special-sub}
                    L_{i}=\phi(0)-\phi(e_{i}),~
                    U_{i}=\phi(1-e_{i})-\phi(1),~
                    \hat{\rho}=\max_{i\in[n_{x}]}\left\{\max\{-L_{i},U_{i}\}\right\}.
                \end{equation}
                (2) If $\phi(x)$ is supermodular in $x\in\{0,1\}^{n_{x}}$, then for any $i\in[n_{x}]$, formulations \eqref{eq:selection-penalty} and \eqref{eq:selection-LR} admit the following closed-form solutions:
                \begin{equation}\label{eq:selection-special-super}
                    L_{i}=\phi(1-e_{i})-\phi(1),~
                    U_{i}=\phi(0)-\phi(e_{i}),~
                    \hat{\rho}=\max_{i\in[n_{x}]}\left\{\max\{-L_{i},U_{i}\}\right\}.
                \end{equation}
                Here, $e_{i}\in\mathbb{R}^{n_{x}}$ is a vector with entry $i$ being 1 and all other entries being 0.
            \end{proposition}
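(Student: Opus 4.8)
The plan is to reduce the min-max problems defining $L_i$ and $U_i$ in \eqref{eq:selection-LR} to evaluating a single discrete marginal of $\phi$ at two extreme points, and then to invoke the monotone-marginal characterization of sub/supermodularity to locate those extrema.

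First I would fix $i \in [n_x]$ and exploit the structure of the feasible region in \eqref{eq:selection-LR}. Since the constraints force $z_{-i} = z'_{-i}$, $z_i = 0$, and $z'_i = 1$, taking $x := z$ the objective becomes $\phi(z) - \phi(z') = -\big(\phi(x + e_i) - \phi(x)\big)$, where $x$ ranges over all binary vectors with $x_i = 0$. Writing $\delta_i(x) := \phi(x+e_i) - \phi(x)$ for this marginal gain, the two formulations in \eqref{eq:selection-LR} become $L_i = -\max_{x : x_i = 0} \delta_i(x)$ and $U_i = -\min_{x : x_i = 0} \delta_i(x)$.

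Second, and this is the crux, I would establish that submodularity in the sense of Definition~\ref{def:property}(i) is equivalent to diminishing marginal returns: whenever $x \le x'$ coordinatewise with $x_i = x'_i = 0$, we have $\delta_i(x) \ge \delta_i(x')$. This follows from a single application of the lattice inequality with $a = x'$ and $b = x + e_i$: one checks directly that $a \vee b = x' + e_i$ and $a \wedge b = x$, so $\phi(a) + \phi(b) \ge \phi(a \vee b) + \phi(a \wedge b)$ rearranges exactly into $\delta_i(x) \ge \delta_i(x')$. Since every feasible base obeys $0 \le x \le 1 - e_i$, chaining this monotonicity yields $\delta_i(0) \ge \delta_i(x) \ge \delta_i(1 - e_i)$ for all $x$ with $x_i = 0$; hence $\max_x \delta_i(x)$ is attained at $x = 0$ and $\min_x \delta_i(x)$ at $x = 1 - e_i$. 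Substituting gives $\max_x \delta_i(x) = \phi(e_i) - \phi(0)$ and $\min_x \delta_i(x) = \phi(1) - \phi(1-e_i)$, and therefore $L_i = \phi(0) - \phi(e_i)$ and $U_i = \phi(1 - e_i) - \phi(1)$, which is \eqref{eq:selection-special-sub}.

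Third, for the supermodular case I would observe that $-\phi$ is submodular (equivalently, repeat the argument with the reversed lattice inequality), so the marginals $\delta_i$ are now nondecreasing; this swaps the locations of the maximizer and minimizer and yields \eqref{eq:selection-special-super}. Finally, the closed form for $\hat\rho$ needs no extra argument: the proof of Corollary~\ref{crl:relationship} already records the identity $\hat\rho = \max_{i \in [n_x]} \max\{-L_i, U_i\}$ linking the penalty coefficient \eqref{eq:selection-penalty} to the Lagrangian coefficients \eqref{eq:selection-LR}, so substituting the closed forms just derived produces the stated $\hat\rho$ in both cases. I expect the main obstacle to be the second step---deriving monotone marginals cleanly from the lattice definition and, in particular, justifying that the extremum over the whole Boolean lattice of bases (not merely along a single chain) is attained at its top and bottom elements; once this monotonicity is secured, the remaining substitutions are routine.
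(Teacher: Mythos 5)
Your proposal is correct and follows essentially the same route as the paper's proof: reduce $L_i$ and $U_i$ to extremizing the marginal $\delta_i(x)=\phi(x+e_i)-\phi(x)$ over $\{x: x_i=0\}$, use the diminishing/increasing-returns consequence of sub/supermodularity to locate the extrema at $x=0$ and $x=1-e_i$, and recover $\hat\rho$ from the identity $\hat\rho=\max_i\max\{-L_i,U_i\}$. The only difference is that you explicitly derive the monotone-marginal property from the lattice inequality (via $a=x'$, $b=x+e_i$), whereas the paper asserts it directly from Definition~\ref{def:property}; this is a welcome filling-in of detail rather than a different argument.
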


            Either submodularity or supermodularity of $\phi(x)$ waives the need of solving complex min-max or max-min optimization and only needs to solve $2n_{x}+2$ lower-level problems, which is much more efficient than the calculation in Proposition \ref{pps:selection}.
            Meanwhile, the calculation in Proposition \ref{pps:selection-special} is equivalent to that in Proposition \ref{pps:selection}, and hence, valid inequalities through Proposition \ref{pps:selection-special} have the same strengths with those through Proposition \ref{pps:selection}, providing stronger inequalities than those through Proposition \ref{pps:selection-quick}.

        \subsection{Submodularity/Supermodularity-based Valid Inequalities}
            Except for strengthening Lagrangian-based valid inequalities, submodularity or supermodularity also enables a new family of valid inequalities to describe $\phi(x)$, leading to Propositions \ref{pps:submodular-cut} and \ref{pps:supermodular-cut}.
            Before we get into the propositions, we define a mapping $\phi:2^{[n_{x}]}\rightarrow\mathbb{R}$ as
            \begin{equation}
                \phi(\mathcal{S}):=\phi(s),
            \end{equation}
            where $s\in\{0,1\}^{n_{x}}$ and for any $i\in[n_{x}]$, $s_{i}=1$ if $i\in\mathcal{S}$ and $s_{i}=0$ if $i\notin\mathcal{S}$.

            \begin{proposition}[Theorem 1 in \cite{shen2023chance}]\label{pps:submodular-cut}
                If $\phi(x)$ is submodular in $x\in\{0,1\}^{n_{x}}$, for any $z\in\{0,1\}^{n_{x}}$, we have a valid and tight inequality for \eqref{eq:optimality} as
                \begin{equation}\label{eq:submodular-cut}
                    d_{\ell}^{\top}y\geq \phi(\mathcal{S}_{0})+\sum_{k=1}^{n_{x}}[\phi(\mathcal{S}_{k})-\phi(\mathcal{S}_{k-1})]x_{\sigma_{k}}.
                \end{equation}
                where $\sigma$ is a permutation of $[n_{x}]$ such that $z_{\sigma_{1}}\geq z_{\sigma_{2}}\geq\cdots\geq z_{\sigma_{n_{x}}}$;
                $\mathcal{S}_{k}:=\{\sigma_{1},\ldots,\sigma_{k}\}$ defines the former $k$ entries of $\sigma$ and $\mathcal{S}_{0}:=\emptyset$.
            \end{proposition}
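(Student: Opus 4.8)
The plan is to read the claimed inequality as an affine under-estimator of the submodular value function $\phi$ and to verify separately the two requirements of Definition~\ref{def:tight}: \emph{validity}, i.e.\ that the right-hand side of \eqref{eq:submodular-cut} is at most $\phi(x)$ for every $x\in\{0,1\}^{n_x}$, and \emph{tightness} at $\hat{x}=z$. Since any $(x,y)$ obeying \eqref{eq:optimality} satisfies $d_\ell^\top y\geq\phi(x)$, proving $\phi(x)\geq\phi(\mathcal{S}_0)+\sum_{k=1}^{n_x}[\phi(\mathcal{S}_k)-\phi(\mathcal{S}_{k-1})]x_{\sigma_k}$ for all $x$ immediately yields validity, and matching this with equality at $x=z$ gives tightness in the sense of Definition~\ref{def:tight}(ii). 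Thus the entire argument reduces to a statement about the set function $\phi:2^{[n_x]}\to\mathbb{R}$.

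For validity I would fix $x$ and let $\mathcal{X}:=\{i:x_i=1\}$ be its support, so that the right-hand side equals $\phi(\emptyset)+\sum_{k:\,\sigma_k\in\mathcal{X}}[\phi(\mathcal{S}_k)-\phi(\mathcal{S}_{k-1})]$. Listing the elements of $\mathcal{X}$ in the order induced by $\sigma$, say $\sigma_{k_1},\ldots,\sigma_{k_r}$ with $k_1<\cdots<k_r$, and writing $\mathcal{X}_j:=\{\sigma_{k_1},\ldots,\sigma_{k_j}\}$ with $\mathcal{X}_0:=\emptyset$, a telescoping identity gives $\phi(\mathcal{X})-\phi(\emptyset)=\sum_{j=1}^r[\phi(\mathcal{X}_j)-\phi(\mathcal{X}_{j-1})]$. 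It then suffices to lower-bound each increment by $\phi(\mathcal{S}_{k_j})-\phi(\mathcal{S}_{k_j-1})$. Here the key containment is $\mathcal{X}_{j-1}\subseteq\mathcal{S}_{k_j-1}$, which holds because $k_1,\ldots,k_{j-1}\leq k_j-1$, together with $\sigma_{k_j}\notin\mathcal{S}_{k_j-1}$. Applying Definition~\ref{def:property}(i) to the two sets $\mathcal{X}_{j-1}\cup\{\sigma_{k_j}\}$ and $\mathcal{S}_{k_j-1}$—whose join is $\mathcal{S}_{k_j}$ and whose meet is $\mathcal{X}_{j-1}$—yields exactly $\phi(\mathcal{X}_j)-\phi(\mathcal{X}_{j-1})\geq\phi(\mathcal{S}_{k_j})-\phi(\mathcal{S}_{k_j-1})$. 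Summing over $j=1,\ldots,r$ and adding $\phi(\emptyset)$ completes the validity argument.

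For tightness I would evaluate the right-hand side at $x=z$. Because $z$ is binary and $\sigma$ orders it non-increasingly, its support is a prefix $\mathcal{S}_m=\{\sigma_1,\ldots,\sigma_m\}$ with $m=|\{i:z_i=1\}|$, so only the first $m$ summands survive and the sum telescopes to $\phi(\mathcal{S}_0)+[\phi(\mathcal{S}_m)-\phi(\mathcal{S}_0)]=\phi(\mathcal{S}_m)=\phi(z)$. Hence \eqref{eq:submodular-cut} holds with equality at $x=z$, certifying tightness; note also that in this special case $\mathcal{X}_j=\mathcal{S}_j$, so each increment inequality from the validity step is met with equality, which is consistent.

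The main obstacle is the bookkeeping in the validity step: one must correctly align the support chain $\mathcal{X}_0\subset\cdots\subset\mathcal{X}_r$ of an arbitrary $x$ against the reference chain $\mathcal{S}_0\subset\cdots\subset\mathcal{S}_{n_x}$ fixed by $z$, and in particular verify the containment $\mathcal{X}_{j-1}\subseteq\mathcal{S}_{k_j-1}$ that licenses the submodular inequality in the correct direction. Once the indices are matched, everything reduces to telescoping plus a single application of Definition~\ref{def:property}(i) per increment.
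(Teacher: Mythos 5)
The paper does not prove this proposition itself — it imports it verbatim as Theorem~1 of the cited reference \cite{shen2023chance} — so there is no in-paper argument to compare against. Your proof is correct and self-contained: it is the classical greedy/polymatroid argument showing that the vector of chain increments $w_{\sigma_k}=\phi(\mathcal{S}_k)-\phi(\mathcal{S}_{k-1})$ lies in the extended polymatroid of $\phi$, so that $\phi(\mathcal{X})\geq\phi(\emptyset)+\sum_{k:\sigma_k\in\mathcal{X}}w_{\sigma_k}$ for every support set $\mathcal{X}$. The one step that needed care — aligning the support chain $\mathcal{X}_{j-1}\subseteq\mathcal{S}_{k_j-1}$ and applying Definition~\ref{def:property}(i) to $\mathcal{X}_{j-1}\cup\{\sigma_{k_j}\}$ and $\mathcal{S}_{k_j-1}$, whose join and meet are exactly $\mathcal{S}_{k_j}$ and $\mathcal{X}_{j-1}$ — is handled correctly, and the prefix/telescoping computation at $x=z$ gives tightness in the sense of Definition~\ref{def:tight}(ii). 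The only implicit assumption, shared with the proposition itself, is that $\phi$ is real-valued on all of $\{0,1\}^{n_x}$ (as in Definition~\ref{def:property}), since the submodular exchange argument does not extend to points where $\phi=-\infty$; this is not a gap in your argument relative to the statement as given.
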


            \begin{proposition}[Theorem 6 in \cite{nemhauser1981maximizing}]\label{pps:supermodular-cut}
                If $\phi(x)$ is supermodular in $x\in\{0,1\}^{n_{x}}$, for any $z\in\{0,1\}^{n_{x}}$, we have a valid and tight inequality for \eqref{eq:optimality} as
                \begin{equation}\label{eq:supermodular-cut}
                    d_{\ell}^{\top}y\geq \phi(\mathcal{S}_{z})-\sum_{i\in \mathcal{S}_{z}}\delta([n_{x}]\backslash\{i\},\{i\})(1-x_{i})+\sum_{i\in[n_{x}]\backslash \mathcal{S}_{z}}\delta(\mathcal{S}_{z},\{i\})x_{i}.
                \end{equation}
                where $\mathcal{S}_{z}:=\{i\in[n_{x}]:z_{i}=1\}$ and for any $\mathcal{S}\subseteq[n_{x}]$ and $i\in[n_{x}]\backslash\mathcal{S}$, $\delta(\mathcal{S},\{i\}):=\phi(\mathcal{S}\cup\{i\})-\phi(\mathcal{S})$.
            \end{proposition}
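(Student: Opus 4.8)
The plan is to verify the two requirements of Definition~\ref{def:tight} separately: first that the affine right-hand side of \eqref{eq:supermodular-cut} is a global lower bound of $\phi(x)$, which yields validity, and second that this lower bound is attained at $\hat{x}=z$, which yields tightness. For validity, note that since $\mathcal{S}_z$ is fixed, every coefficient $\delta([n_x]\setminus\{i\},\{i\})$ and $\delta(\mathcal{S}_z,\{i\})$ in \eqref{eq:supermodular-cut} is a constant, so its right-hand side is affine in $x$. Hence it suffices to prove that for every $x\in\{0,1\}^{n_x}$ the right-hand side does not exceed $\phi(x)$; combined with the optimality condition $d_\ell^\top y\geq\phi(x)$ in \eqref{eq:optimality}, this immediately gives $d_\ell^\top y\geq\phi(x)\geq\text{RHS}$, so any $(x,y)$ feasible to \eqref{eq:optimality} satisfies the cut.

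To carry this out, I would pass to the set-function view afforded by the map $\phi:2^{[n_x]}\to\mathbb{R}$. Writing $S:=\mathcal{S}_z$ and $T:=\{i\in[n_x]:x_i=1\}$, and using that $(1-x_i)$ selects exactly $i\in S\setminus T$ while $x_i$ selects exactly $i\in T\setminus S$, the inequality to be shown reduces to
\begin{equation*}
  \phi(T)\geq\phi(S)-\sum_{i\in S\setminus T}\delta([n_x]\setminus\{i\},\{i\})+\sum_{i\in T\setminus S}\delta(S,\{i\}).
\end{equation*}
The one structural fact I would extract from supermodularity is the monotonicity of marginal gains: for $A\subseteq B$ and $i\notin B$, applying Definition~\ref{def:property}(ii) to $A\cup\{i\}$ and $B$ (whose meet is $A$ and whose join is $B\cup\{i\}$) gives $\delta(A,\{i\})\leq\delta(B,\{i\})$.

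The core of the proof is then a telescoping argument along the chain $S\to S\cup T\to T$. Going up from $S$ to $S\cup T$, I add the elements of $T\setminus S$ one at a time; each intermediate set contains $S$, so by marginal monotonicity each marginal gain is at least $\delta(S,\{i\})$, yielding $\phi(S\cup T)-\phi(S)\geq\sum_{i\in T\setminus S}\delta(S,\{i\})$. Going down from $S\cup T$ to $T$, I remove the elements of $S\setminus T$ one at a time; each intermediate set is contained in $[n_x]$, so each removed marginal is at most $\delta([n_x]\setminus\{i\},\{i\})$, yielding $\phi(T)-\phi(S\cup T)\geq-\sum_{i\in S\setminus T}\delta([n_x]\setminus\{i\},\{i\})$. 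Adding the two bounds reproduces the displayed inequality, proving validity. Tightness then follows by evaluating at $\hat{x}=z$: here $T=S$, both sums are empty, and the right-hand side collapses to $\phi(\mathcal{S}_z)=\phi(z)$, so the cut meets \eqref{eq:optimality} with equality at $\hat{x}=z$.

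The main obstacle is the bookkeeping in the chain argument, specifically the deliberate choice of reference sets for the marginals: when adding I bound below by $\delta(S,\cdot)$ and when removing I bound above by $\delta([n_x]\setminus\{i\},\cdot)$. This choice, rather than the tighter $T$-dependent Nemhauser--Wolsey marginals, is what renders all coefficients independent of $x$ and hence the inequality affine, at the cost of a possibly weaker but still valid bound. Alternatively, since this is Theorem~6 of \cite{nemhauser1981maximizing}, one may cite it directly and supply only the reduction to the set-function inequality and the tightness evaluation at $z$.
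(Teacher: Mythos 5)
Your proof is correct. Note that the paper supplies no argument for this proposition at all: it is imported verbatim as Theorem~6 of \cite{nemhauser1981maximizing}, so the only ``paper proof'' is the citation. Your self-contained derivation is the standard proof of that theorem, and every step checks out: the reduction to the set-function inequality $\phi(T)\geq\phi(S)-\sum_{i\in S\setminus T}\delta([n_x]\setminus\{i\},\{i\})+\sum_{i\in T\setminus S}\delta(S,\{i\})$ is right (the indicators $1-x_i$ and $x_i$ do select $S\setminus T$ and $T\setminus S$ respectively); the nondecreasing-marginal-returns lemma follows from Definition~\ref{def:property}(ii) applied to $A\cup\{i\}$ and $B$ exactly as you say; the upward telescoping from $S$ to $S\cup T$ bounds each marginal below by $\delta(S,\{j\})$ because every intermediate set contains $S$; the downward telescoping from $S\cup T$ to $T$ bounds each removed marginal above by $\delta([n_x]\setminus\{i\},\{i\})$ because every intermediate set sits inside $[n_x]\setminus\{i\}$; and tightness at $\hat{x}=z$ is immediate since both sums vanish. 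Chaining with $d_\ell^\top y\geq\phi(x)$ from \eqref{eq:optimality} gives validity in the sense of Definition~\ref{def:tight}. The only implicit hypothesis worth flagging is that $\phi$ must be real-valued on all of $\{0,1\}^{n_x}$ for the telescoping sums to make sense, which is already built into Definition~\ref{def:property} and is the regime in which the paper invokes supermodularity.
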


            \begin{example}\label{exp:Lagrangian}
                Figure \ref{fig:example-special} shows two-dimensional examples of the submodularity/supernodularity based valid equalities.
                In the left diagram, we have $\phi(1,0)+\phi(0,1)\geq\phi(0,0)+\phi(1,1)$, which means that $\phi$ is submodular in $x$.
                According to \eqref{eq:submodular-cut}, the submodularity-based valid inequality at $z=(1,1)$ is depicted.
                We also depict the Lagrangian-based and penalty-based valid inequalities at $z=(1,1)$, using the exact coefficients from Proposition \ref{pps:selection}.
                We can see that all these valid inequalities are tight at $(x_{1},x_{2})=(1,1)$.
                The submodularity-based valid inequality constructs a facet-defining cut and is the strongest one among the three valid inequalities.
                The Lagrangian-based valid inequality is stronger than the penalty-based valid inequality, which coincides with Corollary \ref{crl:relationship}.
                In the right diagram, we have $\phi(1,0)+\phi(0,1)\leq\phi(0,0)+\phi(1,1)$, which means $\phi$ is supermodular in $x$.
                At $z=(1,0)$, the supermodularity-based valid inequality by \eqref{eq:supermodular-cut} and the Lagrangian-based and penalty-based valid inequalities with the exact coefficients are depicted.
                We have similar observations that the supermodularity-based valid inequality is facet-defining and strongest, and the Lagrangian-based valid inequality is stronger than the penalty-based valid inequality.
                \begin{figure}[!htbp]
                    \begin{center}
                        \vspace{-0ex}
                        \includegraphics[width=0.48\columnwidth]{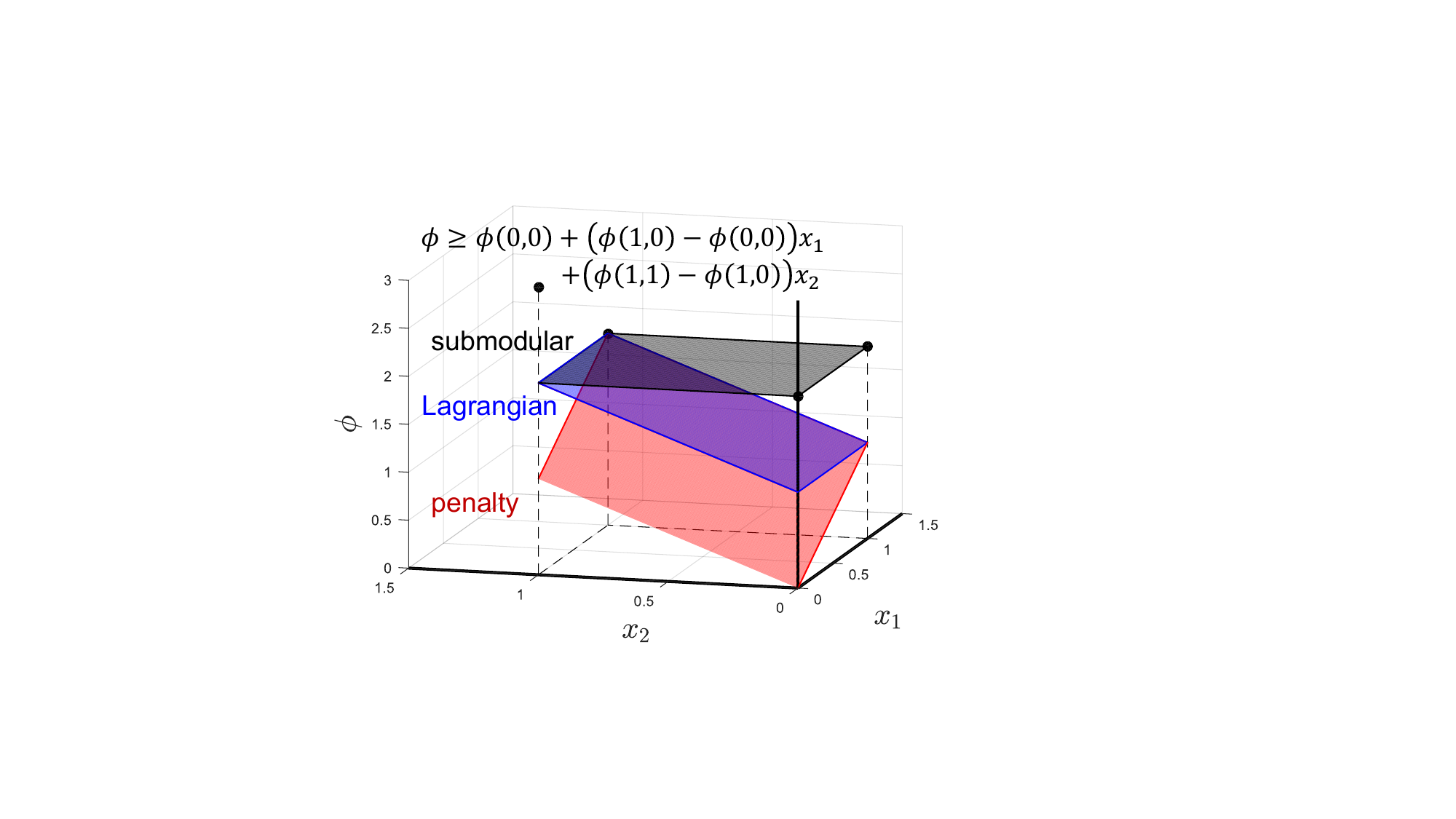}
                        \includegraphics[width=0.48\columnwidth]{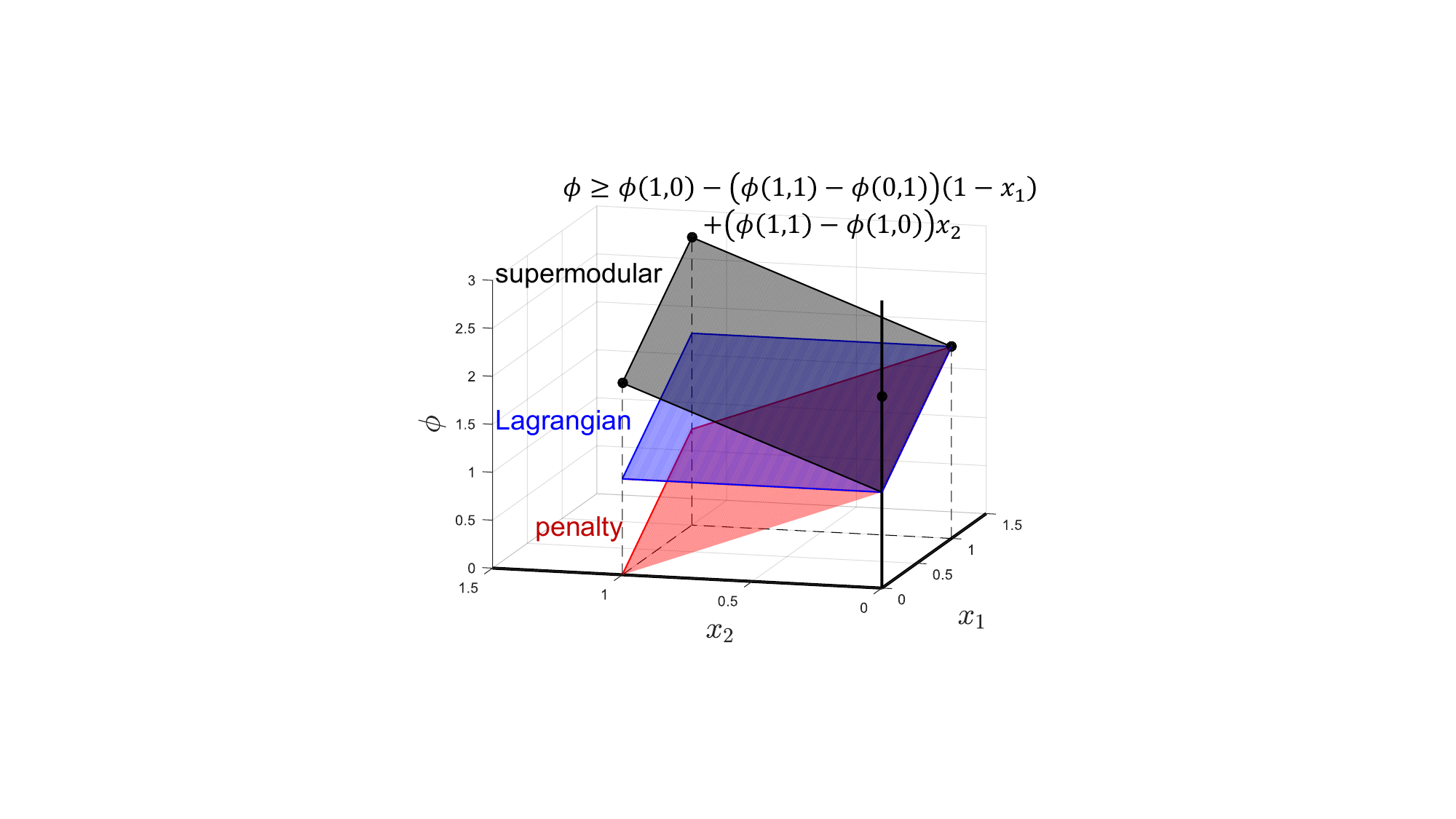}
                    \end{center}
                    \vspace{-4ex}
                    \caption{Illustrative examples of the submodularity/supermodularity-based valid equalities.\\(Left: submodular, $z=(1,1)$; Right: supernodular, $z=(1,0)$)}
                    \vspace{-2ex}
                    \label{fig:example-special}
                \end{figure}
            \end{example}

            Constructing \eqref{eq:submodular-cut} or \eqref{eq:supermodular-cut} only needs to solve several lower-level problems.
            For each \eqref{eq:submodular-cut}, we need to solve $n_{x}+1$ lower-level problems.
            For each \eqref{eq:supermodular-cut}, we need to solve $n_{x}+1$ lower-level problems if $z=1$ and $n_{x}+2$ lower-level problems if not.
            In practical implementation, when we add a new valid inequality regarding a new $z$, there may be repeated calculations, such as $\phi(0)$ in \eqref{eq:submodular-cut} and $\phi(1)$ in \eqref{eq:supermodular-cut}.
            In light of this, we can set a pool to save calculated $\phi$ values and search the pool before calculating any new $\phi(x)$, which avoids repeated calculations and improves computational efficiency.
            On the other hand, when $\phi(x)$ is submodular or supermodular, the right-hand side of \eqref{eq:submodular-cut} or \eqref{eq:supermodular-cut} constructs a facet-defining cut of $\phi(x)$~\cite{schrijver2003combinatorial, nemhauser1981maximizing}.
            Hence, \eqref{eq:submodular-cut} or \eqref{eq:supermodular-cut} shows higher strengths than Lagrangian-based valid inequalities and helps to enhance computational efficiency, which we will demonstrate numerically in Section \ref{sec:results}.

        \subsection{Quasi-Submodularity/Quasi-Supermodularity}
            Though submodularity or supermodularity leads to strong valid inequalities, the property does not always exist or is hard to identify.
            In light of this, we further extend to cases where $\phi(x)$ may not possess submodularity or supermodularity but recover such properties after fixing the integer decision variables at the lower level.
            To define this property precisely, we recall the value function \eqref{eq:value function} as
            \begin{align}\label{eq:quasi}
                \phi(x)=\max_{y_{1}\in Y_{1}}\left\{d_{l1}^{\top}y_{1}+\varphi(x,y_{1})\right\},
            \end{align}
            where $y=[y_{1}^{\top},y_{2}^{\top}]^\top$, $Y=Y_{1}\times Y_{2}$, and
            \begin{equation}\label{eq:varphi}
                \begin{aligned}
                    \varphi(x,y_{1}):=\max_{y_{2}\in Y_{2}}&~d_{l2}^{\top}y_{2}\\
                    \text{s.t.}&~B_{l2}y_{2}\leq h_{\ell}-A_{\ell}x-B_{l1}y_{1}.
                \end{aligned}
            \end{equation}
            \begin{definition}\label{def:quasi}
                Consider the function $\phi: \{0,1\}^{n_{x}}\rightarrow \mathbb{R}$ as defiend in \eqref{eq:quasi}.
                Then,\\
                (i) $\phi$ is called \emph{quasi}-submodular, if for any $y_{1}\in Y_{1}$, $\varphi(x,y_{1})$ is submodular in $x$.\\
                (ii) $\phi$ is called \emph{quasi}-supermodular, if for any $y_{1}\in Y_{1}$, $\varphi(x,y_{1})$ is supermodular in $x$.
            \end{definition}

            Quasi-submodularity/quasi-supermodularity is more common than submodularity/supermodularity and is easier to identify.
            For example, suppose that $Y_{1}\subseteq\{0,1\}^{n_{y_{1}}}$ and $Y_{2}\subseteq\mathbb{R}^{n_{y_{2}}}$, where $n_{y_{1}}$ and $n_{y_{2}}$ are the numbers of binary variables and continuous variables in $y$, respectively (namely, $n_{y_{1}}+n_{y_{2}}=n_{y}$).
            Then, $\varphi(x,y_{1})$ defined in \eqref{eq:varphi} is a linear program, and we can use the identified conditions in \cite{chen2021preservation, long2024supermodularity} to check whether $\varphi(x,y_{1})$ is submodular/supermodular in $x$.
            We note that \cite{chen2021preservation} also identifies conditions to directly check the supermodularity of $\phi(x)$, yet such conditions are impractical when $y$ involves discrete variables.

            \subsubsection{Lagrangian-based Valid Inequalities with Exact Coefficients}
                For the value function defined in \eqref{eq:quasi}, the Lagrangian-based methods are also applicable and we design the Lagrangian-based valid inequality in Proposition \ref{pps:quasi-Lagrangian-cut}.
                \begin{proposition}\label{pps:quasi-Lagrangian-cut}
                    For any $z\in\{0,1\}^{n_{x}}$, the following inequality is valid and tight for \eqref{eq:optimality}:
                    \begin{equation}\label{eq:quasi-Lagrangian-cut}
                        d_{\ell}^{\top}y\geq \phi(z)-(\hat{\lambda}(z,\hat{y}_{1}))^{\top}(x-z),
                    \end{equation}
                    where $\hat{y}_{1}\in\arg\max_{y_{1}\in Y_{1}}\left\{d_{l1}^{\top}y_{1}+\varphi(z,y_{1})\right\}$ and $\hat{\lambda}(z,\hat{y}_{1}):=U(\hat{y}_{1})\odot(1-z)+L(\hat{y}_{1})\odot z$.
                    Here, $U(\hat{y}_{1})$ and $L(\hat{y}_{1})$ are sufficiently large and sufficiently small vectors, respectively, and their exact calculation depends on the value of $\hat{y}_{1}$.
                \end{proposition}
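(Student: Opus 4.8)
The plan is to reduce \eqref{eq:quasi-Lagrangian-cut} to a Lagrangian cut on the partial value function $\varphi(\cdot,\hat y_1)$, which is submodular or supermodular by the quasi assumption, and then to invoke the machinery already developed for such functions. First I would observe that, since every $(x,y)$ satisfying \eqref{eq:optimality} obeys $d_\ell^\top y\geq\phi(x)$, it suffices by Definition~\ref{def:tight}(i) to prove the purely $x$-dependent inequality
\[
    \phi(x)\geq\phi(z)-\bigl(\hat\lambda(z,\hat y_1)\bigr)^\top(x-z),\qquad\forall x\in\{0,1\}^{n_x};
\]
chaining this with $d_\ell^\top y\geq\phi(x)$ then yields \eqref{eq:quasi-Lagrangian-cut}.

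Next I would exploit the decomposition \eqref{eq:quasi}--\eqref{eq:varphi} by fixing the first block at $\hat y_1$. This produces two facts: feasibility $\hat y_1\in Y_1$ gives the underestimate $\phi(x)\geq d_{l1}^\top\hat y_1+\varphi(x,\hat y_1)$ valid for every $x$, while optimality of $\hat y_1$ at $z$ gives the \emph{equality} $\phi(z)=d_{l1}^\top\hat y_1+\varphi(z,\hat y_1)$. Substituting both into the target inequality and cancelling the common term $d_{l1}^\top\hat y_1$ reduces the claim to
\[
    \varphi(x,\hat y_1)\geq\varphi(z,\hat y_1)-\bigl(\hat\lambda(z,\hat y_1)\bigr)^\top(x-z),\qquad\forall x\in\{0,1\}^{n_x}.
\]

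Finally I would recognize $\varphi(\cdot,\hat y_1)$ as itself a lower-level value function of the form \eqref{eq:value function}, with $d_{l2}$, $B_{l2}$, and $h_\ell-B_{l1}\hat y_1$ in the roles of $d_\ell$, $B_\ell$, and $h_\ell$. By Definition~\ref{def:quasi} this function is submodular (resp.\ supermodular) in $x$, so the reduced inequality is precisely the Lagrangian-based cut of Proposition~\ref{pps:LR-cut} applied to $\varphi(\cdot,\hat y_1)$; the coefficients $U(\hat y_1)$ and $L(\hat y_1)$ are then obtained in closed form from Proposition~\ref{pps:selection-special} with $\phi$ replaced by $\varphi(\cdot,\hat y_1)$, which is exactly why they depend on $\hat y_1$. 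Tightness is read off at $\hat x=z$: the term $(\hat\lambda(z,\hat y_1))^\top(x-z)$ vanishes, so the right-hand side of \eqref{eq:quasi-Lagrangian-cut} collapses to $\phi(z)$ and meets \eqref{eq:optimality} with equality there.

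The main obstacle is the second step and, in particular, the asymmetric use of $\hat y_1$: the bound $\phi(x)\geq d_{l1}^\top\hat y_1+\varphi(x,\hat y_1)$ is a genuine underestimate that is slack at those $x\neq z$ where a different first-block decision is optimal, yet it is exact at $x=z$. I would check that this underestimation is harmless for validity---it only enlarges the left-hand side---while still delivering equality at $z$, which is what secures tightness. A secondary point to confirm is that Definition~\ref{def:quasi} supplies the required submodularity/supermodularity of $\varphi(\cdot,y_1)$ for \emph{every} $y_1\in Y_1$, hence in particular for the maximizer $\hat y_1$, so that Proposition~\ref{pps:selection-special} is applicable with the anchor $\hat y_1$.
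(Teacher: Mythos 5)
Your proposal is correct and follows essentially the same route as the paper: fix the optimal first-block decision $\hat y_1$ at the anchor $z$, use $\phi(x)\geq d_{l1}^\top\hat y_1+\varphi(x,\hat y_1)$ together with $\phi(z)=d_{l1}^\top\hat y_1+\varphi(z,\hat y_1)$, apply the Lagrangian cut of Proposition~\ref{pps:LR-cut} to $\varphi(\cdot,\hat y_1)$, and read off tightness at $x=z$. The only (harmless) difference in emphasis is that you invoke quasi-sub/supermodularity inside the validity argument, whereas the paper needs it only for the efficient closed-form computation of $U(\hat y_1)$ and $L(\hat y_1)$ via Proposition~\ref{pps:selection-special}; validity of the cut on $\varphi(\cdot,\hat y_1)$ holds for any sufficiently large/small coefficients.
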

                \begin{proof}\color{black}
                    From \eqref{eq:quasi}, we have
                    \begin{align*}
                        \phi(x)\geq d_{l1}^{\top}y_{1}+\varphi(x,y_{1}),~\forall y_{1}\in Y_{1},
                    \end{align*}
                    which implies
                    \begin{align*}
                        d_{\ell}^{\top}y\geq d_{l1}^{\top}\hat{y}_{1}+\varphi(x,\hat{y}_{1})
                    \end{align*}
                    is valid for \eqref{eq:optimality}.
                    We follow the derivation of \eqref{eq:LR-cut2} to handle $\varphi(x,\hat{y}_{1})$, and hence,
                    \begin{align*}
                        d_{\ell}^{\top}y\geq d_{l1}^{\top}\hat{y}_{1}+\varphi(z,\hat{y}_{1})-(\hat{\lambda}(z))^{\top}(x-z)
                    \end{align*}
                    is valid for \eqref{eq:optimality}, where $\hat{\lambda}(z)=U\odot(1-z)+L\odot z$.
                    Because this valid inequality is obtained under a fixed $\hat{y}_{1}$, the coefficients $U$ and $L$ should depend on the value of $\hat{y}_{1}$.
                    So we denote $\hat{\lambda}(z,\hat{y}_{1})=U(\hat{y}_{1})\odot(1-z)+L(\hat{y}_{1})\odot z$.
                    Furthermore, because $\hat{y}_{1}\in\arg\max_{y_{1}\in Y_{1}}\left\{d_{l1}^{\top}y_{1}+\varphi(z,y_{1})\right\}$, we have $d_{l1}^{\top}\hat{y}_{1}+\varphi(z,\hat{y}_{1})=\phi(z)$.
                    Therefore, \eqref{eq:quasi-Lagrangian-cut} is valid for \eqref{eq:optimality}.

                    By fixing $x$ on the right-hand side of \eqref{eq:quasi-Lagrangian-cut} as $z$, we obtain
                    \begin{equation*}
                        d_{\ell}^{\top}y\geq \phi(z),
                    \end{equation*}
                    which implies that \eqref{eq:quasi-Lagrangian-cut} is tight for \eqref{eq:optimality}.
                    This completes the proof.
                \end{proof}

                We note that \eqref{eq:quasi-Lagrangian-cut} is similar to \eqref{eq:LR-cut2} and the only difference lies in the exact calculation of Lagrangian coefficients.
                The exact calculation for \eqref{eq:LR-cut2} follows from Proposition \ref{pps:selection}, which involves min-max or max-min optimization and is intractable.
                However, with quasi-submodularity or quasi-supermodularity, we can first solve $\hat{y}_{1}$ and then follow Proposition \ref{pps:selection-special} to calculate the exact coefficients for \eqref{eq:quasi-Lagrangian-cut}, which is more computationally efficient.
                In the implementation of \eqref{eq:quasi-Lagrangian-cut}, although we need to solve new Lagrangian coefficients once we have obtained a new $\hat{y}_{1}$, the induced valid inequalities are strong.

            \subsubsection{Quasi-Submodularity/Quasi-Supermodularity-based Valid Inequalities}
                With quasi-submodularity or quasi-supermodularity, we design new valid inequalities in Propositions \ref{pps:quasi-submodular-cut} and \ref{pps:quasi-supermodular-cut}.
                We define a mapping $\varphi:2^{[n_{x}]}\times Y_1\rightarrow\mathbb{R}$ as
                \begin{equation}
                    \varphi(\mathcal{S},y_{1}):=\varphi(s,y_{1}),
                \end{equation}
                where $s\in\{0,1\}^{n_{x}}$ and for any $i\in[n_{x}]$, $s_{i}=1$ if $i\in\mathcal{S}$ and $s_{i}=0$ if $i\notin\mathcal{S}$.
                \begin{proposition}\label{pps:quasi-submodular-cut}
                    If $\phi(x)$ defined as \eqref{eq:quasi} is quasi-submodular in $x\in\{0,1\}^{n_{x}}$, for any $z\in\{0,1\}^{n_{x}}$, we have a valid and tight inequality for \eqref{eq:optimality} as
                    \begin{equation}\label{eq:quasi-submodular-cut}
                        d_{\ell}^{\top}y\geq d_{l1}^{\top}\hat{y}_{1}+\varphi(\mathcal{S}_{0},\hat{y}_{1})+\sum_{k=1}^{n_{x}}[\varphi(\mathcal{S}_{k},\hat{y}_{1})-\varphi(\mathcal{S}_{k-1},\hat{y}_{1})]x_{\sigma_{k}}.
                    \end{equation}
                    where $\hat{y}_{1}\in\arg\max_{y_{1}\in Y_{1}}\left\{d_{l1}^{\top}y_{1}+\varphi(z,y_{1})\right\}$;
                    $\sigma$ is a permutation of $[n_{x}]$ such that $z_{\sigma_{1}}\geq z_{\sigma_{2}}\geq\cdots\geq z_{\sigma_{n_{x}}}$;
                    $\mathcal{S}_{k}:=[\sigma_{1},\ldots,\sigma_{k}]$ defines the former $k$ entries of $\sigma$ and $\mathcal{S}_{0}:=\emptyset$.
                \end{proposition}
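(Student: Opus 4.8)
The plan is to treat \eqref{eq:quasi-submodular-cut} as the composition of two ingredients already established in the paper: the $\hat{y}_1$-selection device from the proof of Proposition \ref{pps:quasi-Lagrangian-cut}, and the submodular underestimator of Proposition \ref{pps:submodular-cut}, now applied to the fixed-block function $\varphi(\cdot,\hat{y}_1)$ rather than to $\phi$ itself. First I would fix $\hat{y}_1\in\arg\max_{y_1\in Y_1}\{d_{l1}^{\top}y_1+\varphi(z,y_1)\}$ and observe that, because $\hat{y}_1$ is merely a feasible point of $Y_1$, the defining maximization \eqref{eq:quasi} yields the weak bound $\phi(x)\geq d_{l1}^{\top}\hat{y}_1+\varphi(x,\hat{y}_1)$ for \emph{every} $x\in\{0,1\}^{n_x}$, not only for $x=z$. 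Chaining this with \eqref{eq:optimality} shows that $d_{\ell}^{\top}y\geq d_{l1}^{\top}\hat{y}_1+\varphi(x,\hat{y}_1)$ is valid for \eqref{eq:optimality}, exactly as in Proposition \ref{pps:quasi-Lagrangian-cut}.

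For the second ingredient I would invoke quasi-submodularity (Definition \ref{def:quasi}): with $\hat{y}_1$ held fixed, $g(x):=\varphi(x,\hat{y}_1)$ is a genuinely submodular function of $x\in\{0,1\}^{n_x}$, so the submodular underestimator behind Proposition \ref{pps:submodular-cut} applies verbatim to $g$. Using the same permutation $\sigma$ (ordering $z$ decreasingly) and the same nested sets $\mathcal{S}_k=\{\sigma_1,\ldots,\sigma_k\}$, this gives the linear lower bound $\varphi(x,\hat{y}_1)\geq\varphi(\mathcal{S}_0,\hat{y}_1)+\sum_{k=1}^{n_x}[\varphi(\mathcal{S}_k,\hat{y}_1)-\varphi(\mathcal{S}_{k-1},\hat{y}_1)]x_{\sigma_k}$, valid for all $x$. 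Adding $d_{l1}^{\top}\hat{y}_1$ to both sides and combining with the bound from the previous paragraph establishes the validity of \eqref{eq:quasi-submodular-cut}.

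It then remains to certify tightness in the sense of Definition \ref{def:tight}, for which I would take $\hat{x}=z$ and show that the right-hand side of \eqref{eq:quasi-submodular-cut} collapses to $\phi(z)$. Since $z$ is binary and $\sigma$ lists its $1$-entries first, writing $p$ for the number of ones in $z$ we have $z_{\sigma_k}=1$ precisely for $k\leq p$, so the sum telescopes to $\varphi(\mathcal{S}_p,\hat{y}_1)-\varphi(\mathcal{S}_0,\hat{y}_1)$; as $\mathcal{S}_p$ is exactly the support of $z$, this equals $\varphi(z,\hat{y}_1)-\varphi(\mathcal{S}_0,\hat{y}_1)$, leaving $d_{l1}^{\top}\hat{y}_1+\varphi(z,\hat{y}_1)$, which by the optimality of $\hat{y}_1$ for $z$ equals $\phi(z)$, matching the right-hand side of \eqref{eq:optimality} at $x=z$. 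The one place that demands care, and the main conceptual point, is the validity argument: the submodular cut must hold for the \emph{fixed} $\hat{y}_1$ at \emph{all} $x$, whereas $\hat{y}_1$ is optimal only at $z$. Quasi-submodularity is precisely what reconciles these two requirements, since it guarantees submodularity of $\varphi(\cdot,\hat{y}_1)$ uniformly over $\hat{y}_1\in Y_1$; the telescoping and the $X_{LF}$-feasibility bookkeeping (already enforced by \eqref{eq:lower feasibility}) are then routine.
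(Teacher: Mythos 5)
Your proposal is correct and follows essentially the same route the paper intends: the authors omit this proof as ``similar to that of Proposition \ref{pps:quasi-Lagrangian-cut},'' i.e., fix $\hat{y}_1$ optimal at $z$, use $\phi(x)\geq d_{l1}^{\top}\hat{y}_1+\varphi(x,\hat{y}_1)$ for all $x$, and then apply the relevant underestimator (here the submodular cut of Proposition \ref{pps:submodular-cut} to $\varphi(\cdot,\hat{y}_1)$ in place of the Lagrangian cut), with tightness at $x=z$ following from the optimality of $\hat{y}_1$. Your explicit telescoping verification of tightness is a correct and slightly more detailed rendering of the same argument.
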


                \begin{proposition}\label{pps:quasi-supermodular-cut}
                    If $\phi(x)$ defined as \eqref{eq:quasi} is quasi-supermodular in $x\in\{0,1\}^{n_{x}}$, for any $x\in\{0,1\}^{n_{x}}$, we have a valid and tight inequality for \eqref{eq:optimality} as
                    \begin{equation}\label{eq:quasi-supermodular-cut}
                        d_{\ell}^{\top}y\geq d_{l1}^{\top}\hat{y}_{1}+\varphi(\mathcal{S},\hat{y}_{1})-\sum_{i\in \mathcal{S}}\delta([n_{x}]\backslash\{i\},\{i\},\hat{y}_{1})(1-x_{i})+\sum_{i\in[n_{x}]\backslash \mathcal{S}}\delta(\mathcal{S},\{i\},\hat{y}_{1})x_{i}.
                    \end{equation}
                    where $\hat{y}_{1}\in\arg\max_{y_{1}\in Y_{1}}\left\{d_{l1}^{\top}y_{1}+\varphi(z,y_{1})\right\}$;
                    $\mathcal{S}_{z}=\{i\in[n_{x}]:z_{i}=1\}$ and for any $\mathcal{S}\subseteq[n_{x}]$, $i\in[n_{x}]\backslash\mathcal{S}$, $y_1\in Y_1$, $\delta(\mathcal{S},\{i\},y_{1}):=\varphi(\mathcal{S}\cup\{i\},y_{1})-\varphi(\mathcal{S},y_{1})$.
                \end{proposition}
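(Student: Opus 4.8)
The plan is to mirror the proof of Proposition~\ref{pps:quasi-Lagrangian-cut}, replacing its Lagrangian treatment of the inner value function by the supermodular underestimator of Proposition~\ref{pps:supermodular-cut}. First I would fix $\hat{y}_1 \in \arg\max_{y_1 \in Y_1}\{d_{l1}^\top y_1 + \varphi(z, y_1)\}$ and use the definition~\eqref{eq:quasi} of $\phi$ to record the pointwise bound
\[
    \phi(x) \geq d_{l1}^\top \hat{y}_1 + \varphi(x, \hat{y}_1), \qquad \forall x \in \{0,1\}^{n_x},
\]
which holds because $\hat{y}_1$ is merely feasible (not necessarily optimal) for the outer maximization at an arbitrary $x$. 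Chaining this with the optimality condition~\eqref{eq:optimality} gives $d_\ell^\top y \geq d_{l1}^\top \hat{y}_1 + \varphi(x, \hat{y}_1)$ for every $(x,y)$ satisfying~\eqref{eq:optimality}.

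Next I would invoke quasi-supermodularity: by Definition~\ref{def:quasi}(ii), the map $x \mapsto \varphi(x, \hat{y}_1)$ is supermodular on $\{0,1\}^{n_x}$ for the fixed $\hat{y}_1$. I would then apply the supermodular linear underestimator of Proposition~\ref{pps:supermodular-cut} (equivalently, Theorem~6 of~\cite{nemhauser1981maximizing}) to this function rather than to $\phi$ itself, obtaining
\[
    \varphi(x, \hat{y}_1) \geq \varphi(\mathcal{S}_z, \hat{y}_1) - \sum_{i \in \mathcal{S}_z}\delta([n_x]\backslash\{i\},\{i\},\hat{y}_1)(1-x_i) + \sum_{i \in [n_x]\backslash\mathcal{S}_z}\delta(\mathcal{S}_z,\{i\},\hat{y}_1)x_i
\]
for all $x \in \{0,1\}^{n_x}$, where $\mathcal{S}_z = \{i : z_i = 1\}$. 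Adding $d_{l1}^\top \hat{y}_1$ to both sides and combining with the bound from the first step produces exactly the right-hand side of~\eqref{eq:quasi-supermodular-cut}, which establishes validity.

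For tightness I would evaluate at $x = z$. There $1 - x_i = 0$ for every $i \in \mathcal{S}_z$ and $x_i = 0$ for every $i \notin \mathcal{S}_z$, so both correction sums vanish and the underestimator collapses to $\varphi(\mathcal{S}_z, \hat{y}_1) = \varphi(z, \hat{y}_1)$. Since $\hat{y}_1$ was chosen to maximize the outer problem at $z$, we have $d_{l1}^\top \hat{y}_1 + \varphi(z, \hat{y}_1) = \phi(z)$, so the right-hand side of~\eqref{eq:quasi-supermodular-cut} equals $\phi(z)$ at $\hat{x} = z$, meeting the tightness requirement in Definition~\ref{def:tight}(ii).

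The only genuine subtlety, and the step I would check most carefully, is that Proposition~\ref{pps:supermodular-cut} must be read as a deterministic linear lower bound valid for \emph{any} supermodular set function, and then reapplied to $\varphi(\cdot, \hat{y}_1)$ in place of the value function $\phi$; the variable $y$ and the optimality condition play no role inside this substep. One must also confirm that the increments $\delta(\cdot,\cdot,\hat{y}_1)$ defined through $\varphi$ match the generic supermodular-cut increments, which is immediate from their definitions. Everything else is a routine chaining of inequalities that parallels the quasi-submodular case of Proposition~\ref{pps:quasi-submodular-cut}, so no essential obstacle remains beyond this bookkeeping.
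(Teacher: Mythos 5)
Your proposal is correct and follows exactly the route the paper intends: the paper omits this proof, stating only that it parallels Proposition~\ref{pps:quasi-Lagrangian-cut}, and your argument reproduces that template faithfully---bounding $\phi(x)$ below by $d_{l1}^{\top}\hat{y}_{1}+\varphi(x,\hat{y}_{1})$, applying the supermodular underestimator of Proposition~\ref{pps:supermodular-cut} to the (supermodular, by Definition~\ref{def:quasi}(ii)) function $\varphi(\cdot,\hat{y}_{1})$, and verifying tightness at $x=z$ via the optimality of $\hat{y}_{1}$. Your flagged subtlety, that the supermodular cut must be read as a generic underestimator for any supermodular set function rather than only for $\phi$, is precisely the right point to check and is indeed how the paper uses it.
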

                The proofs of Propositions \ref{pps:quasi-submodular-cut} and \ref{pps:quasi-supermodular-cut} are similar to that of Proposition \ref{pps:quasi-Lagrangian-cut} and thus omitted.

                Constructing \eqref{eq:quasi-submodular-cut} or \eqref{eq:quasi-supermodular-cut} requires solving a lower-level problem to obtain $\hat{y}_{1}$ and then $n_{x}+1$ lower-level problems for $\varphi(\cdot,\hat{y}_1)$.
                Once we have obtained a new $z$ and thus $\hat{y}_{1}$, we need to recalculate all $\varphi(\cdot,\hat{y}_1)$.
                Although we can also set a pool to save calculated $\phi$ and $\varphi$, we have fewer repeated $\varphi$-values due to different $\hat{y}_{1}$-inputs.
                Hence, \eqref{eq:quasi-submodular-cut} or \eqref{eq:quasi-supermodular-cut} induces higher computational burden than \eqref{eq:submodular-cut} or \eqref{eq:supermodular-cut}, respectively.
                On the other hand, because we solve the optimal $\hat{y}_{1}$ and the right-hand side of \eqref{eq:quasi-submodular-cut} or \eqref{eq:quasi-supermodular-cut} constructs a facet-defining cut of $\varphi$, \eqref{eq:quasi-submodular-cut} or \eqref{eq:quasi-supermodular-cut} also shows high strengths.

    \section{Decision Rule-based Valid Inequalities}\label{sec:DR}
        In this section, we move to cases where special properties no longer exist.
        We leverage decision rules to solve the lower-level problem approximately and derive additional valid inequalities.
        We focus on MILP lower-level problems and recall the value function \eqref{eq:value function} as
        \begin{equation}
            \begin{aligned}
                \phi(x)=\max_{y_1\in Y_{1},y_{2}\in Y_{2}}~&d_{l1}^{\top}y_{1}+d_{l2}^{\top}y_{2}\\
                \text{s.t.}~&B_{l1}y_{1}+B_{l2}y_{2}\leq h_{\ell}-A_{\ell}x,
            \end{aligned}
        \end{equation}
        where $y_{1}\in Y_{1}\subseteq\{0,1\}^{n_{y_{1}}}$ and $y_{2}\in Y_{2}\subseteq\mathbb{R}^{n_{y_{2}}}$.
        In the following, we investigate linear decision rules in Section \ref{sec:DR1} and nonlinear ones in Section \ref{sec:DR2}.

        \subsection{Linear Decision Rule-based Valid Inequalities}\label{sec:DR1}
            We consider using linear decision rules to approximate the optimal policy.
            In particular, we apply linear decision rules to model the mapping from $x$ to the optimal $y_{2}$.
            To make the problem well-defined, we make the following assumption.
            \begin{assumption}\label{asp:feasible}
                For any $x\in\{0,1\}^{n_{x}}$, the lower-level problem is feasible and bounded.
            \end{assumption}

            The assumption is mild and commonly adopted in the literature \cite{KabirifarFotuhi-Firuzabad-3072,li2022public,qi2021sequential,smith2020survey}.
            Then, we can derive a valid inequality as presented in the following proposition.
            \begin{proposition}\label{pps:LDR-cut}
                For any $(\hat{\alpha},\hat{\beta})\in\Omega\subseteq \mathbb{R}^{n_{x}}\times\mathbb{R}$, we have a valid inequality for \eqref{eq:optimality} as
                \begin{equation}\label{eq:LDR-cut}
                    d_{\ell}^{\top}y\geq\hat{\alpha}^{\top}x+\hat{\beta}.
                \end{equation}
                Here, $\Omega$ is the projection onto $(\alpha,\beta)$ of the following set:
                \begin{equation}
                    \left\{\begin{aligned}
                    &\beta\leq d_{l1}^{\top}y_{1}+d_{l2}^{\top}y_{2}+\mathbf{1}^{\top}(U^{\top}d_{l2}-\alpha)^{-}\\
                    &B_{l1}y_{1}+B_{l2}y_{2}\leq h_{\ell}-(A_{\ell}+B_{l2}U)^{+}\mathbf{1}\\
                    &y_1\in\{0,1\}^{n_{y_{1}}},y_{2}\in\mathbb{R}^{n_{y_{2}}},U\in\mathbb{R}^{n_{y_{2}}\times n_{x}},\alpha\in\mathbb{R}^{n_{x}},\beta\in\mathbb{R}
                    \end{aligned}\right..
                \end{equation}
            \end{proposition}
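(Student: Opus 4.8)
The plan is to reduce the claimed validity to a purely lower-level statement and then certify it with an affine decision rule. By Definition~\ref{def:tight}, showing that \eqref{eq:LDR-cut} is valid for \eqref{eq:optimality} amounts to showing $\hat{\alpha}^{\top}x+\hat{\beta}\le\phi(x)$ for every $x\in\{0,1\}^{n_{x}}$: indeed, any $(x,y)$ satisfying \eqref{eq:optimality} obeys $d_{\ell}^{\top}y\ge\phi(x)$, so once the linear function underestimates $\phi$ pointwise on the hypercube, chaining the two inequalities yields $d_{\ell}^{\top}y\ge\hat{\alpha}^{\top}x+\hat{\beta}$. Thus the entire argument is about producing, from any point of $\Omega$, an affine lower bound of the value function; Assumption~\ref{asp:feasible} guarantees $\phi(x)$ is finite for every $x$, so this bound is well defined.

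First I would introduce the affine decision rule $y_{2}(x)=y_{2}+Ux$, where the variable $y_{2}$ in $\Omega$ plays the role of the rule's value at $x=0$ and $U$ is the slope matrix, while the binary block is fixed at $y_{1}$. For this rule to certify a bound at every $x$, the pair $(y_{1},y_{2}(x))$ must be lower-level feasible for every $x\in\{0,1\}^{n_{x}}$. Substituting into $B_{l1}y_{1}+B_{l2}y_{2}(x)\le h_{\ell}-A_{\ell}x$ and collecting the $x$-dependent terms gives $B_{l1}y_{1}+B_{l2}y_{2}+(A_{\ell}+B_{l2}U)x\le h_{\ell}$, required for all binary $x$. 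Since the left-hand side is affine in $x$, its worst case over $\{0,1\}^{n_{x}}$ coincides with its maximum over the relaxed box $[0,1]^{n_{x}}$ and decomposes row by row, the $j$th row attaining $[(A_{\ell}+B_{l2}U)^{+}\mathbf{1}]_{j}$ by setting $x_{i}=1$ exactly on the positive coefficients. The robustified requirement is therefore $B_{l1}y_{1}+B_{l2}y_{2}\le h_{\ell}-(A_{\ell}+B_{l2}U)^{+}\mathbf{1}$, which is precisely the second defining constraint of $\Omega$.

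Next I would read off the objective attained by the rule. Whenever $(y_{1},y_{2},U)$ satisfies the robust feasibility above, $(y_{1},y_{2}(x))$ is feasible for \eqref{eq:value function} at every $x$, so $\phi(x)\ge d_{l1}^{\top}y_{1}+d_{l2}^{\top}(y_{2}+Ux)=d_{l1}^{\top}y_{1}+d_{l2}^{\top}y_{2}+(U^{\top}d_{l2})^{\top}x$, an affine lower bound of $\phi$. To obtain \eqref{eq:LDR-cut} with a chosen slope $\alpha$, I would underestimate this affine function by $\alpha^{\top}x+\beta$ uniformly over the hypercube, i.e., require $\beta\le d_{l1}^{\top}y_{1}+d_{l2}^{\top}y_{2}+(U^{\top}d_{l2}-\alpha)^{\top}x$ for all $x\in\{0,1\}^{n_{x}}$. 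Minimizing the affine right-hand side over the box again decomposes componentwise, now selecting the negative coefficients, and yields $\beta\le d_{l1}^{\top}y_{1}+d_{l2}^{\top}y_{2}+\mathbf{1}^{\top}(U^{\top}d_{l2}-\alpha)^{-}$, the first defining constraint of $\Omega$. Hence every $(\hat{\alpha},\hat{\beta})\in\Omega$ satisfies $\hat{\alpha}^{\top}x+\hat{\beta}\le\phi(x)$ for all binary $x$, which closes the argument.

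The main obstacle is the two worst-case (robustification) steps: justifying that requiring an affine inequality for all $x\in\{0,1\}^{n_{x}}$ is equivalent to a single inequality obtained by replacing the $x$-dependent term with its positive- or negative-part sum. The clean way is to observe that an affine function of $x$ attains its extrema over $[0,1]^{n_{x}}$ at a vertex, so the binary and box requirements coincide, and the extremizing vertex is read off coordinatewise from the sign of each coefficient, giving $\mathbf{1}^{\top}(\cdot)^{+}$ for maxima and $\mathbf{1}^{\top}(\cdot)^{-}$ for minima. Everything else is bookkeeping; one should only be careful that the intercept variable in $\Omega$ is literally $y_{2}$ (the rule's value at $x=0$) and that $\alpha$ is a free slope, so that $\Omega$ is exactly the projection obtained after eliminating $(y_{1},y_{2},U)$.
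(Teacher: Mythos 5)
Your proposal is correct and follows essentially the same route as the paper's proof: reduce validity to finding an affine underestimator of $\phi$, certify it with a linear decision rule $y_{2}(x)=y_{2}+Ux$ at a fixed $y_{1}$, and robustify the feasibility and objective conditions over the binary hypercube via the $(\cdot)^{+}$ and $(\cdot)^{-}$ operations. The only difference is presentational: you verify the bound pointwise and constructively, whereas the paper organizes the same steps as a chain of lower bounds on $\min_{x}\{-\alpha^{\top}x+\phi(x)\}$ with an explicit max--min (weak duality) swap.
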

            \begin{proof}\color{black}
                We seek a cut $\phi(x)\geq\alpha^{\top}x+\beta$ with coefficients $(\alpha,\beta)$ that is valid for all $x\in\{0,1\}^{n_{x}}$.
                Hence, $\forall x\in\{0,1\}^{n_{x}}$, we have
                \begin{equation*}
                    \alpha^{\top}x+\beta\leq\max
                    _{\substack{
                    y_1\in\{0,1\}^{n_{y_{1}}},y_{2}\in\mathbb{R}^{n_{y_{2}}}\\
                    B_{l1}y_{1}+B_{l2}y_{2}\leq h_{\ell}-A_{\ell}x}
                    }d_{l1}^{\top}y_{1}+d_{l2}^{\top}y_{2},
                \end{equation*}
                which implies
                \begin{equation*}
                    \beta\leq\min_{x\in\{0,1\}^{n_{x}}}\left\{
                    -\alpha^{\top}x+\max_{\substack{
                    y_1\in\{0,1\}^{n_{y_{1}}},y_{2}\in\mathbb{R}^{n_{y_{2}}}\\
                    B_{l1}y_{1}+B_{l2}y_{2}\leq h_{\ell}-A_{\ell}x}
                    }d_{l1}^{\top}y_{1}+d_{l2}^{\top}y_{2}
                    \right\}.
                \end{equation*}
                Note that $(\alpha,\beta)$ is valid if $\beta$ is no larger than a lower bound of the right-hand side.
                To this end, we lower bound the right-hand side as follows:
                \begin{equation*}
                    \begin{aligned}
                    &\min_{x\in\{0,1\}^{n_{x}}}\left\{
                    -\alpha^{\top}x+\max_{\substack{
                    y_1\in\{0,1\}^{n_{y_{1}}},y_{2}\in\mathbb{R}^{n_{y_{2}}}\\
                    B_{l1}y_{1}+B_{l2}y_{2}\leq h_{\ell}-A_{\ell}x}
                    }d_{l1}^{\top}y_{1}+d_{l2}^{\top}y_{2}
                    \right\}\\
                    =&\min_{x\in\{0,1\}^{n_{x}}}
                    \max_{y_1\in\{0,1\}^{n_{y_{1}}}}\left\{
                    -\alpha^{\top}x+d_{l1}^{\top}y_{1}+\max_{\substack{
                    y_{2}\in\mathbb{R}^{n_{y_{2}}}\\
                    B_{l2}y_{2}\leq h_{\ell}-A_{\ell}x-B_{l1}y_{1}}
                    }d_{l2}^{\top}y_{2}
                    \right\}\\
                    \geq&\max_{y_1\in\{0,1\}^{n_{y_{1}}}}
                    \min_{x\in\{0,1\}^{n_{x}}}\left\{
                    -\alpha^{\top}x+d_{l1}^{\top}y_{1}+\max_{\substack{
                    y_{2}\in\mathbb{R}^{n_{y_{2}}}\\
                    B_{l2}y_{2}\leq h_{\ell}-A_{\ell}x-B_{l1}y_{1}}
                    }d_{l2}^{\top}y_{2}
                    \right\}\\
                    \geq&\max_{y_1\in\{0,1\}^{n_{y_{1}}}}
                    \max_{\substack{
                    U\in\mathbb{R}^{n_{y_{2}}\times n_{x}},v\in\mathbb{R}^{n_{y_{2}}}\\
                    B_{l2}(Ux+v)\leq h_{\ell}-A_{\ell}x-B_{l1}y_{1},\forall x\in\{0,1\}^{n_{x}}}
                    }
                    \min_{x\in\{0,1\}^{n_{x}}}\left\{
                    -\alpha^{\top}x+d_{l1}^{\top}y_{1}+d_{l2}^{\top}(Ux+v)
                    \right\}\\
                    =&\max_{\substack{
                    y_1\in\{0,1\}^{n_{y_{1}}},U\in\mathbb{R}^{n_{y_{2}}\times n_{x}},y_{2}\in\mathbb{R}^{n_{y_{2}}}\\
                    B_{l1}y_{1}+B_{l2}y_{2}\leq h_{\ell}-\max_{x\in\{0,1\}^{n_{x}}}(A_{\ell}+B_{l2}U)x}
                    }
                    \left\{d_{l1}^{\top}y_{1}+d_{l2}^{\top}y_{2}+\min_{x\in\{0,1\}^{n_{x}}}
                    (U^{\top}d_{l2}-\alpha)^{\top}x
                    \right\}\\
                    =&\max_{\substack{
                    y_1\in\{0,1\}^{n_{y_{1}}},y_{2}\in\mathbb{R}^{n_{y_{2}}},U\in\mathbb{R}^{n_{y_{2}}\times n_{x}}\\
                    B_{l1}y_{1}+B_{l2}y_{2}\leq h_{\ell}-(A_{\ell}+B_{l2}U)^{+}\mathbf{1}}
                    }
                    d_{l1}^{\top}y_{1}+d_{l2}^{\top}y_{2}+\mathbf{1}^{\top}(U^{\top}d_{l2}-\alpha)^{-}.
                    \end{aligned}
                \end{equation*}
                Here, the first inequality follows the weak duality and the second inequality applies the linear decision rule $y_{2}=Ux+v$ with $U\in\mathbb{R}^{n_{y_{2}}\times n_{x}}$ and $v\in\mathbb{R}^{n_{y_{2}}}$.
                The subsequent equality replaces $v$ with $y_{2}$ and we note $y_{2}$ here has a different meaning from the original one.
                The last equality utilizes the binary-valued property of $x$.

                Therefore, $(\alpha,\beta)$ is valid if
                \begin{equation*}
                    \beta\leq\max_{\substack{
                    y_1\in\{0,1\}^{n_{y_{1}}},y_{2}\in\mathbb{R}^{n_{y_{2}}},U\in\mathbb{R}^{n_{y_{2}}\times n_{x}}\\
                    B_{l1}y_{1}+B_{l2}y_{2}\leq h_{\ell}-(A_{\ell}+B_{l2}U)^{+}\mathbf{1}}
                    }
                    d_{l1}^{\top}y_{1}+d_{l2}^{\top}y_{2}+\mathbf{1}^{\top}(U^{\top}d_{l2}-\alpha)^{-},
                \end{equation*}
                which implies that $(\alpha,\beta)$ is valid if there exists $(y_{1},y_{2},U)$ such that
                \begin{equation*}
                    \left\{\begin{aligned}
                    &\beta\leq d_{l1}^{\top}y_{1}+d_{l2}^{\top}y_{2}+\mathbf{1}^{\top}(U^{\top}d_{l2}-\alpha)^{-}\\
                    &B_{l1}y_{1}+B_{l2}y_{2}\leq h_{\ell}-(A_{\ell}+B_{l2}U)^{+}\mathbf{1}\\
                    &y_1\in\{0,1\}^{n_{y_{1}}},y_{2}\in\mathbb{R}^{n_{y_{2}}},U\in\mathbb{R}^{n_{y_{2}}\times n_{x}}
                    \end{aligned}\right..
                \end{equation*}
                We define the projection of this set onto $(\alpha,\beta)$ as $\Omega$ and end the proof.
            \end{proof}

            The separation of the valid inequality \eqref{eq:LDR-cut} can be done as follows:
            Given $(\hat{x},\hat{y})$, we solve
            \begin{equation}\label{eq:separation}
                \gamma^{*}:=\max_{(\alpha,\beta)\in\Omega}\hat{x}^{\top}\alpha+\beta
            \end{equation}
            and obtain its optimal solution $(\alpha^*,\beta^*)$.
            If $\gamma^*>d_{\ell}^{\top}\hat{y}$, we have $d_{\ell}^{\top}\hat{y}<(\alpha^*)^{\top}\hat{x}+\beta^*$, which means $(\hat{x},\hat{y})$ violates the above cut and should be removed.
            Then, we add a cut as
            \begin{equation}
                d_{\ell}^{\top}y\geq(\alpha^*)^{\top}x+\beta^*.
            \end{equation}

            However, due to the large $U$ and the binary-valued $y_{1}$, the separation problem \eqref{eq:separation} is a large-scale MILP and can be hard to solve.
            In practical implementation, we consider two methods to alleviate the computational burden.
            First, we fix some entries of $U$ to be zero and reduce the scale of \eqref{eq:separation}.
            This method keeps the validity of the generated inequality.
            Second, we fix $y_{1}$ in obtaining $\Omega$ and thus the separation problem \eqref{eq:separation} becomes a linear program.
            In light of the two methods, we have the following proposition:
            \begin{proposition}\label{pps:LDR-cut2}
                For any $\xi\in\{0,1\}^{n_{y_{2}}\times n_{x}}$, $\hat{y}_{1}\in Y_{1}$ and any $(\hat{\alpha},\hat{\beta})\in\Omega(\xi,\hat{y}_{1})\subseteq\mathbb{R}^{n_{x}}\times\mathbb{R}$, we have a valid inequality for \eqref{eq:optimality} as
                \begin{equation}\label{eq:LDR-cut2}
                    d_{\ell}^{\top}y\geq\hat{\alpha}^{\top}x+\hat{\beta}.
                \end{equation}
                Here, $\Omega(\xi,\hat{y}_{1})$ is the projection onto $(\alpha,\beta)$ of the following set:
                \begin{equation}
                    \left\{\begin{aligned}
                    &\beta\leq d_{l1}^{\top}\hat{y}_{1}+d_{l2}^{\top}y_{2}+\mathbf{1}^{\top}(U^{\top}d_{l2}-\alpha)^{-}\\
                    &B_{l1}\hat{y}_{1}+B_{l2}y_{2}\leq h_{\ell}-(A_{\ell}+B_{l2}U)^{+}\mathbf{1}\\
                    &U\odot\xi=0\\
                    &y_{2}\in\mathbb{R}^{n_{y_{2}}},U\in\mathbb{R}^{n_{y_{2}}\times n_{x}},\alpha\in\mathbb{R}^{n_{x}},\beta\in\mathbb{R}
                    \end{aligned}\right..
                \end{equation}
            \end{proposition}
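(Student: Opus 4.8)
The plan is to retrace the argument that establishes Proposition \ref{pps:LDR-cut} and to observe that the two modifications encoded in $\Omega(\xi,\hat{y}_1)$---fixing the binary lower-level block to $y_1=\hat{y}_1$ and restricting the decision-rule matrix through $U\odot\xi=0$---only shrink the feasible regions of inner maximizations that are used to lower bound $\phi(x)$. Since a smaller inner maximization yields a smaller value, every inequality in the chain continues to point in the direction needed for validity. Concretely, a pair $(\alpha,\beta)$ is valid for the cut $\phi(x)\geq\alpha^\top x+\beta$ precisely when $\beta\leq\min_{x\in\{0,1\}^{n_x}}\{\phi(x)-\alpha^\top x\}$, so it suffices to exhibit a lower bound on this quantity whose defining conditions coincide with the constraint set of $\Omega(\xi,\hat{y}_1)$.

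First I would reproduce the derivation of Proposition \ref{pps:LDR-cut} up to the point where, after applying weak duality to interchange the outer minimization over $x$ with the maximization over $y_1$, the expression takes the form $\max_{y_1\in\{0,1\}^{n_{y_1}}}\min_{x}\{\cdots\}$. Replacing this outer maximum by the single admissible choice $y_1=\hat{y}_1$ can only decrease the value, so the resulting quantity remains a valid lower bound on $\phi(x)-\alpha^\top x$; when the inner problem happens to be infeasible for some $x$ under $\hat{y}_1$, that term evaluates to $-\infty$ and the inequality is preserved trivially.

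Next I would impose the linear decision rule $y_2=Ux+v$ exactly as before, but restrict $U$ to the subspace $\{U:U\odot\xi=0\}$. Restricting the feasible set of the inner maximization over $(U,v)$ again only decreases its value, so validity is preserved. The robust feasibility requirement $B_{l2}(Ux+v)\leq h_{\ell}-A_{\ell}x-B_{l1}\hat{y}_1$ for all $x\in\{0,1\}^{n_x}$ is then converted, using the binary-valued property of $x$, into $B_{l1}\hat{y}_1+B_{l2}y_2\leq h_{\ell}-(A_{\ell}+B_{l2}U)^{+}\mathbf{1}$ after relabeling $v$ as $y_2$, while minimizing the remaining linear function over the box contributes the term $\mathbf{1}^{\top}(U^{\top}d_{l2}-\alpha)^{-}$. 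Collecting these conditions reproduces exactly the constraint set whose projection onto $(\alpha,\beta)$ is $\Omega(\xi,\hat{y}_1)$, so any $(\hat{\alpha},\hat{\beta})$ in that projection satisfies $\phi(x)\geq\hat{\alpha}^\top x+\hat{\beta}$ for all $x$; chaining with \eqref{eq:optimality}, which gives $d_{\ell}^{\top}y\geq\phi(x)$, yields the validity of \eqref{eq:LDR-cut2}.

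The main obstacle, though a mild one, is the bookkeeping around feasibility: I must confirm that fixing $y_1=\hat{y}_1$ never converts a valid lower bound into a spurious one, which reduces to checking that the fixed-$y_1$ inner problem is dominated by the full $\max_{y_1}$ at every $x$. This is immediate, since $\hat{y}_1$ is one of the admissible choices and infeasible instances only make the bound smaller. I would likewise verify that the mask constraint $U\odot\xi=0$ interacts cleanly with the robust-to-deterministic reduction; it does, because fixing entries of $U$ to zero merely removes degrees of freedom prior to the worst-case-over-$x$ operation, leaving the $(\cdot)^{+}$ and $(\cdot)^{-}$ reformulations structurally unchanged.
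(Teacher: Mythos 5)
Your proposal is correct and follows exactly the route the paper intends: the paper omits this proof, noting only that it is ``similar to that of Proposition \ref{pps:LDR-cut},'' and your argument is precisely that adaptation --- fixing $y_1=\hat{y}_1$ in the outer maximization and restricting $U$ to the mask $U\odot\xi=0$ only shrink feasible sets of maximizations that serve as lower bounds, so every inequality in the original chain is preserved. Your additional care about the degenerate case where the fixed-$\hat{y}_1$ robust feasibility system is infeasible (making $\Omega(\xi,\hat{y}_1)$ empty and the claim vacuous) is a correct and welcome detail.
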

            The proof of Proposition \ref{pps:LDR-cut2} is similar to that of Proposition \ref{pps:LDR-cut} and is thus omitted.

            In Proposition \ref{pps:LDR-cut2}, $\xi$ indicates the sparsity of $U$ and is selected according to specific problems.
            $\hat{y}_{1}$ is selected by solving the lower-level problem under the incumbent value of $x$.
            By replacing $\Omega$ in \eqref{eq:separation} with $\Omega(\xi,\hat{y}_{1})$, we have the separation problem and generate corresponding valid inequalities.

        \subsection{Trained Decision Rule-based Valid Inequalities}\label{sec:DR2}
            In many practical problems, we may repeatedly solve the BP model, especially the lower-level problem with respect to varying $x$ (or problem parameters).
            Motivated by this, we consider training a decision rule from historical data of the past solves to approximate the optimal policy at the lower level.
            Note that once an estimate of $y_{1}$ is available, the remaining problem becomes an LP and can be handled by existing methods.
            Hence, in this part, we only consider training decision rules to model the mapping from $x$ to the optimal $y_{1}$.
            Due to its strong power in data analysis and fitting, machine learning, such as neural networks, has drawn wide interest in function approximation \cite{zhoulearning, pmlr-v80-fujimoto18a, liang2017why, 1388456}.
            We utilize neural networks to approximate this mapping.
            Yet considering that learning is not the focus of the paper, we follow a general learning framework \cite{zhoulearning}, which is introduced in Appendix \ref{apd:learning}.

            We denote the trained decision rule as $\tilde{y}_{1}(x)$.
            Then, we derive a valid inequality as presented in the following proposition.
            \begin{proposition}\label{pps:learning-cut}
                For any fixed decision rule $\tilde{y}_{1}(x)$, we have a valid inequality for \eqref{eq:optimality} as
                \begin{equation}\label{eq:learning-cut}
                    d_{\ell}^{\top}y\geq d_{l1}^{\top}\tilde{y}_{1}(x)+\min_{\pi\in\mathbb{R}^{m_{\ell}}}\left\{
                        \big(h_{\ell}-A_{\ell}x-B_{l1}\tilde{y}_{1}(x)\big)^{\top}\pi
                        ~|~
                        \pi\geq0,B_{l2}^{\top}\pi=d_{l2}
                    \right\}.
                \end{equation}
            \end{proposition}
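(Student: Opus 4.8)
The plan is to exploit the two-stage structure of the value function together with linear programming duality of the continuous recourse. The starting point is that, for any \emph{fixed} first-stage value $y_1\in Y_1$, restricting the outer maximization in the definition of $\phi(x)$ to that single $y_1$ yields the lower bound
\[
\phi(x)\geq d_{l1}^{\top}y_1+\max_{y_2\in\mathbb{R}^{n_{y_2}}}\left\{d_{l2}^{\top}y_2 : B_{l2}y_2\leq h_{\ell}-A_{\ell}x-B_{l1}y_1\right\}.
\]
I would then dualize the inner linear program over $y_2$, whose dual feasible region $\{\pi\geq0 : B_{l2}^{\top}\pi=d_{l2}\}$ depends neither on $x$ nor on $y_1$, to obtain, by strong LP duality,
\[
\max_{y_2}\left\{d_{l2}^{\top}y_2 : B_{l2}y_2\leq h_{\ell}-A_{\ell}x-B_{l1}y_1\right\}=\min_{\pi\geq0,\,B_{l2}^{\top}\pi=d_{l2}}(h_{\ell}-A_{\ell}x-B_{l1}y_1)^{\top}\pi.
\]

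I would justify strong duality from Assumption~\ref{asp:feasible}: since $\phi(x)$ is finite for every $x\in\{0,1\}^{n_x}$, the inner problem over $y_2$ can never be unbounded (else $\phi(x)=+\infty$), so whenever it is feasible it is bounded and the primal and dual optima coincide. The only remaining case is that the inner problem is infeasible for the particular $y_1$ in hand; then the inner maximum is $-\infty$, and because the dual feasible region is independent of $x,y_1$ and is nonempty (it must be feasible at an optimal first-stage choice, where the inner problem is bounded), the dual minimum equals $-\infty$ as well, so the bound continues to hold trivially.

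Finally I would substitute $y_1=\tilde{y}_1(x)$, which lies in $Y_1$ by construction of the decision rule, into the chain above to conclude
\[
\phi(x)\geq d_{l1}^{\top}\tilde{y}_1(x)+\min_{\pi\geq0,\,B_{l2}^{\top}\pi=d_{l2}}\big(h_{\ell}-A_{\ell}x-B_{l1}\tilde{y}_1(x)\big)^{\top}\pi.
\]
Since the optimality condition~\eqref{eq:optimality} reads $d_{\ell}^{\top}y\geq\phi(x)$ while the right-hand side of~\eqref{eq:learning-cut} is no larger than $\phi(x)$ for every $x$, Definition~\ref{def:tight}(i) certifies that~\eqref{eq:learning-cut} is valid for~\eqref{eq:optimality}.

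The main obstacle I anticipate is the careful bookkeeping around LP duality in the degenerate cases — ensuring that the $-\infty$ branch (infeasible inner problem) does not break validity and confirming that the dual feasible region is nonempty under Assumption~\ref{asp:feasible} — together with making explicit that $\tilde{y}_1(x)\in Y_1$, which is precisely what licenses substituting the learned estimate as a feasible choice of first-stage variables in the lower bound on $\phi(x)$.
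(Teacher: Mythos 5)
Your proposal is correct and follows essentially the same route as the paper's proof: restrict the outer maximization over $y_{1}$ to the fixed value $\tilde{y}_{1}(x)$ to lower-bound $\phi(x)$, then dualize the remaining LP in $y_{2}$. You are in fact more careful than the paper's one-line ``through dualization'' step, since you explicitly verify that strong duality (and nonemptiness of the $\pi$-feasible region, which is independent of $x$ and $y_{1}$) ensures the dual minimum is still a valid lower bound even when the inner problem is infeasible for the particular $\tilde{y}_{1}(x)$.
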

            \begin{proof}\color{black}
                We first rewrite \eqref{eq:optimality} as
                \begin{equation*}
                    d_{\ell}^{\top}y\geq\max_{y_{1}\in\{0,1\}^{n_{y_{1}}}}\left\{d_{l1}^{\top}y_{1}+\max_{\substack{y_{2}\in\mathbb{R}^{n_{y_{2}}}\\B_{l2}y_{2}\leq h_{\ell}-A_{\ell}x-B_{l1}y_{1}}}d_{l2}^{\top}y_{2}\right\}.
                \end{equation*}
                Incorporating the fixed decision rule $\tilde{y}_{1}(x)$, we have
                \begin{equation*}
                    d_{\ell}^{\top}y\geq d_{l1}^{\top}\tilde{y}_{1}(x)+\max_{\substack{y_{2}\in\mathbb{R}^{n_{y_{2}}}\\B_{l2}y_{2}\leq h_{\ell}-A_{\ell}x-B_{l1}\tilde{y}_{1}(x)}}d_{l2}^{\top}y_{2}.
                \end{equation*}
                Through dualization, we introduce dual variable $\pi$ and obtain \eqref{eq:learning-cut}.
                This completes the proof.
            \end{proof}

            The trained decision rule-based valid inequality \eqref{eq:learning-cut} is equivalent to
            \begin{subequations}\label{eq:learning-cut2}
                \begin{gather}
                    d_{\ell}^{\top}y\geq d_{l1}^{\top}+\big(h_{\ell}-A_{\ell}x-B_{l1}\tilde{y}_{1}\big)^{\top}\pi\\
                    \pi\geq0,B_{l2}^{\top}\pi=d_{l2}\\
                    \tilde{y}_{1}=\tilde{y}_{1}(x).
                \end{gather}
            \end{subequations}
            Bacause both $x$ and $\tilde{y}_{1}$ are binary-valued, the bilinear term $(h_{\ell}-A_{\ell}x-B_{l1}\tilde{y}_{1})^{\top}\pi$ can be linearized by the standard McCormick inequalities.
            The function $\tilde{y}_{1}(x)$ may be nonlinear and its linearization will be provided later.

            There are two approaches to utilizing \eqref{eq:learning-cut}.
            If we have high confidence in the quality of the trained decision rule $\tilde{y}_{1}(x)$, we can directly replace the optimality condition \eqref{eq:optimality} with \eqref{eq:learning-cut}.
            This approach quickly yields a lower bound with a solution $\hat{x}$, and by fixing $x=\hat{x}$ and solving \eqref{eq:VFR}, we can immediately obtain an upper bound and thus the corresponding optimality gap.
            Without confidence in the quality of $\tilde{y}_{1}(x)$, another approach is to add \eqref{eq:learning-cut} as an additional constraint and handle the optimality condition \eqref{eq:optimality} by Lagrangian-based valid inequalities.
            The latter approach is computationally heavier, but nevertheless provides an optimality guarantee.

            \subsubsection{Encoding Trained Decision Rules}
                Figure \ref{fig:NN} shows the adopted neural network architecture.
                There are $K$ hidden layers and one output layer in the architecture and we employ passthrough to enhance the representability.
                The input is $x$ and the output is $\tilde{y}_{1}$.
                Variable $z_{k}$ denotes the output of the $k$th hidden layer, $W_{k}/D_{k}$ and $b_{k}$ are the weights and biases of the $k$th layer, respectively, and $\sigma(\cdot)$ denotes the activation function.
                Specifically, the neural network defines $\tilde{y}_{1}(x)$ through
                \begin{subequations}\label{eq:NN}
                    \begin{align}
                        z_{1}&=\sigma(W_{1}x+b_{1}),\label{eq:NN1}\\
                        z_{k}&=\sigma(W_{k}z_{k-1}+b_{k}+D_{k}x), \quad \forall k=2,\ldots,K,\label{eq:NN2}\\
                        \tilde{y}_{1}&=\sigma(W_{K+1}z_{K}+b_{K+1}+D_{K+1}x).\label{eq:NN3}
                    \end{align}
                \end{subequations}
                \begin{figure}[!htbp]
                    \begin{center}
                        \vspace{-0ex}
                        \includegraphics[width=0.8\columnwidth]{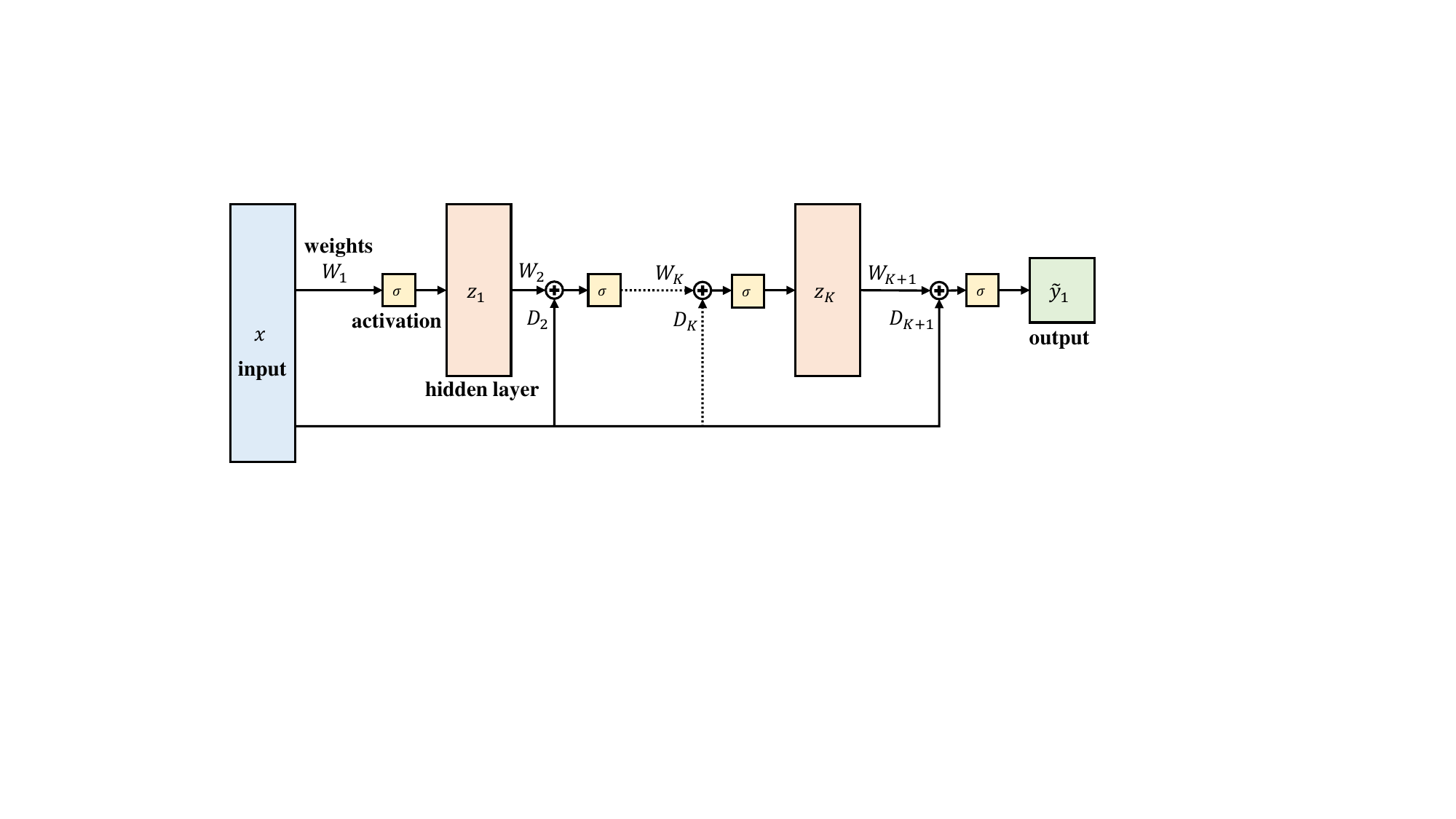}
                    \end{center}
                    \vspace{-2ex}
                    \caption{Neural network architecture.}
                    \vspace{-2ex}
                    \label{fig:NN}
                \end{figure}

                Considering that $y_{1}$ is binary-valued, we use the sigmoid function as activation, i.e., $\sigma(x):=1/(1+e^{-x})$.
                In light of the nonlinear $\sigma(x)$, we conduct the following linearization in encoding $\tilde{y}_{1}(x)$.
                For the sigmoid activation in the hidden layers, i.e., $\sigma(\cdot)$ in \eqref{eq:NN1} and \eqref{eq:NN2}, we approximate it by a piecewise linear function $\tilde{\sigma}_{p}(x):=\text{clip}(x/5+1/2,[0,1])$.
                In Appendix \ref{apd:sigmoid}, we compare the sigmoid function $\sigma(x)$ and the piecewise linearization $\tilde{\sigma}_{p}(x)$.
                Note that $\tilde{\sigma}_{p}(x)$ can be further linearized as
                \begin{subequations}
                    \begin{align}
                        -M\delta_{1}+\frac{1}{5}x+\frac{1}{2}\leq~&\tilde{\sigma}_{p}~\leq \frac{1}{5}x+\frac{1}{2}+M\delta_{0}\\
                        \delta_{1}\leq~&\tilde{\sigma}_{p}~\leq 1-\delta_{0},
                    \end{align}
                \end{subequations}
                where $\delta_{0}$ and $\delta_{1}$ are auxiliary binary variables and $M$ is a sufficiently large constant.
                For the sigmoid activation of the output layer, i.e., $\sigma(x)$ in \eqref{eq:NN3}, we approximate it by a unit step function $\tilde{\sigma}_{s}(x):=\text{step}(x)$, guaranteeing that $\tilde{y}_{1}$ is binary-valued.
                The step function $\tilde{\sigma}_{s}(x)$ can be further linearized as
                \begin{equation}
                    -M(1-\tilde{\sigma}_{s})\leq~x~\leq M\tilde{\sigma}_{s}.
                \end{equation}
                We note that the approximation from $\sigma(\cdot)$ to $\tilde{\sigma}_{p}(\cdot)$ or $\tilde{\sigma}_{s}(\cdot)$ influences the accuracy of the practically encoded decision rule $\tilde{y}_{1}(x)$, but does not break the validity of \eqref{eq:learning-cut}.

    \section{Numerical Results}\label{sec:results}
        In this section, we conduct numerical experiments in both general bilevel MILP instances and facility location interdiction problems, both of which contain binary tenders .
        We compare with the state-of-the-art solver for bilevel problems, \texttt{MibS} \cite{tahernejad2020branch}, to validate the effectiveness and performance of our methods.
        All experiments are implemented in Python 3.7 and solved by Gurobi 10.0.3 on a computer equipped with an Intel i7-10870H CPU and 16 GB of RAM.
        The time limit is one hour.

        \subsection{General Problems}
            We first consider general bilevel MILP instances as shown in \eqref{eq:bilevel}.
            For comparison, we follow the instance generation rules of \cite{tahernejad2020branch} and more details are placed in Appendix \ref{apd:instance-general}.
            We generate one small instance with $n_{x}=10$ and ten larger instances with the dimension $n_{x}$ of the leader's decision $x$ ranging from 200 to 2000.
            We compare the \texttt{MibS} solver and our Lagrangian-based methods (using the quick calculation of their coefficients) in these instances.

            \begin{table}[!b]
                    \small
                    \renewcommand{\arraystretch}{1.1}
                    \vspace{-0ex}
                    \caption{Comparison of solution quality in general problems}
                    \vspace{-2ex}
                    \label{tab:general-objective}
                    \centering
                    \begin{tabular}{ccccccc}
                        \toprule
                        \multirow{2}{*}{$n_x$} & \multicolumn{2}{c}{\texttt{MibS}} & \multicolumn{2}{c}{Gurobi - Penalty} & \multicolumn{2}{c}{Gurobi - Lagrangian} \\
                        \cmidrule(lr){2-3}\cmidrule(lr){4-5}\cmidrule(lr){6-7}
                                               & Objective    & gap       & Objective            & gap           & Objective             & gap             \\
                        \midrule
                        10                     & $-$72.56     & 0         & $-$72.56             & 0             & $-$72.56              & 0               \\
                        200                    & $-$170.53    & 0         & $-$175.93            & 0             & $-$175.93             & 0               \\
                        400                    & $-$91.02     & 0         & $-$106.61            & 0             & $-$106.61             & 0               \\
                        600                    & $-$105.62    & 0         & $-$105.62            & 0             & $-$105.62             & 0               \\
                        800                    & $-$118.78    & 0         & $-$118.78            & 0             & $-$118.78             & 0               \\
                        1000                   & $-$96.83     & 0         & $-$96.83             & 0             & $-$96.83              & 0               \\
                        1200                   & $-$64.01     & 7.61\%    & $-$66.67             & 0             & $-$66.67              & 0               \\
                        1400                   & $-$68.99     & 4.68\%    & $-$82.46             & 0             & $-$82.46              & 0               \\
                        1600                   & $-$89.50     & 0         & $-$89.50            & 0             &   $\backslash$                    &    $\backslash$             \\
                        1800                   & $-$57.46     & 12.36\%   & $-$68.73             & 0             &  $\backslash$                     &    $\backslash$             \\
                        2000                   & $-$71.02     & 24.93\%   & $-$76.38             & 0             & $\backslash$                      &$\backslash$\\
                        \bottomrule
                    \end{tabular}
                \end{table}

                \begin{table}[!b]
                    \small
                    \renewcommand{\arraystretch}{1.1}
                    \vspace{-0ex}
                    \caption{Comparison of solution time in general problems}
                    \vspace{-2ex}
                    \label{tab:general-time}
                    \centering
                    \begin{tabular}{cccccc}
                        \toprule
                        \multirow{2}{*}{$n_x$} & \texttt{MibS}              & \multicolumn{2}{c}{Gurobi - Penalty}                         & \multicolumn{2}{c}{Gurobi - Lagrangian}                     \\
                        \cmidrule(lr){2-2}\cmidrule(lr){3-4}\cmidrule(lr){5-6}
                                               & \begin{tabular}[c]{@{}c@{}}time for\\solution (s)\end{tabular} & \begin{tabular}[c]{@{}c@{}}time for\\solution (s)\end{tabular} & \begin{tabular}[c]{@{}c@{}}time for\\$\hat{\rho}$ (s)\end{tabular} & \begin{tabular}[c]{@{}c@{}}time for\\solution (s)\end{tabular} & \begin{tabular}[c]{@{}c@{}}time for\\ $U\&L$ (s)\end{tabular}\\
                        \midrule
                        10                     & 0.14              & 0.24              & 0.02                                     & 0.25              & 0.28                                    \\
                        200                    & 10.26             & 4.09              & 0.24                                     & 4.30              & 78.89                                   \\
                        400                    & 52.50             & 8.00              & 2.38                                     & 7.56              & 393.17                                  \\
                        600                    & 244.57            & 21.52             & 4.18                                     & 18.08             & 1081.78                                 \\
                        800                    & 579.31            & 27.55             & 13.84                                    & 26.73             & 2523.72                                 \\
                        1000                   & 811.37            & 41.98             & 29.25                                    & 35.58             & 4584.26                                 \\
                        1200                   & 3787.79           & 244.99            & 34.42                                    & 217.27            & 8055.63                                 \\
                        1400                   & 3848.30           & 238.77            & 89.56                                    & 204.85            & 13547.33                                \\
                        1600                   & 3618.36           & 198.46            & 144.43                                   &  $\backslash$                 &     $>$ 4 hours                                     \\
                        1800                   & 4030.51           & 577.60            & 373.65                                   &  $\backslash$                 &   $>$ 4 hours                                       \\
                        2000                   & 4209.03           & 815.86            & 302.62                                   &  $\backslash$                 &$>$ 4 hours \\
                        \bottomrule
                    \end{tabular}
            \end{table}

            \subsubsection{Performance Comparison}
                The numerical results are provided in Tables \ref{tab:general-objective} and \ref{tab:general-time}.
                In terms of the solution quality shown in Table \ref{tab:general-objective}, both the penalty-based and Lagrangian-based valid inequalities lead to optimal solutions in all instances with \(n_x \leq 1400\), while \texttt{MibS} cannot find optimal solutions when $n_{x}$ exceeds 1000. When \(n_x\) further increases beyond 1400, the penalty-based inequality is still able to prove global optimum, while both \texttt{MibS} and the Lagrangian-based inequality cannot do so.
                The comparison validates the superiority of the penalty-based inequality in finding optimal solutions for general instances.

                In terms of the solution time shown in Table \ref{tab:general-time}, when \(n_x \leq 1400\), the Lagrangian-based valid inequality yields the shortest solution time, followed by the penalty-based valid inequality.
                In addition, Algorithm~\ref{alg:bilevel} using either penalty-based or Lagrangian-based valid inequality outperforms \texttt{MibS} in solution time.
                This validates the computational efficiency of the proposed approaches.
                Nevertheless, the time for obtaining the Lagrangian coefficients $\hat{\rho}$ and $U/L$ increases as $n_{x}$ increases and becomes non-trivial when $n_{x}$ is large. In particular, when $n_{x}$ exceeds 1400, the time for obtaining Lagrangian coefficients $U/L$ is longer than 4 hours, rendering the Lagrangian-based inequality inefficient for large instances.
                In contrast, the time for obtaining coefficient \(\hat{\rho}\) for the penalty-based valid inequality as well as the corresponding branch-and-cut algorithm are still significantly shorter than that of \texttt{MibS} even in these large instances. This demonstrates the good scalability of applying the penalty-based valid inequality.

                With the above results and analysis, we demonstrate that our Lagrangian-based methods show better performance and scalability than the \texttt{MibS} solver in general instances.
                In particular, we suggest adopting the penalty-based valid inequality when the dimension \(n_x\) of the tender variables is high.

            \subsubsection{Sensitivity Analysis}
                We further validate the performance of our proposed methods under different conditions.
                We use the $n_{x}=2000$ instance and adopt the penalty-based valid inequality for sensitivity analysis.

                We first conduct a sensitivity analysis on the ratio of binary variables in $y$ and raise the ratio from $10\%$ to $90\%$.
                The results are reported in Figure \ref{fig:general-binary}.
                We see that our method can always find optimal solutions.
                As the ratio of binary variables in $y$ increases, which means the lower-level problem transforms from a pure linear program to a pure integer program, the solution time of our method decreases.
                However, when the ratio reaches $90\%$, the time for obtaining the penalty coefficient gets much longer, which may influence the overall efficiency.

                \begin{figure}[!b]
                    \begin{center}
                        \vspace{-1ex}
                        \includegraphics[width=0.65\columnwidth]{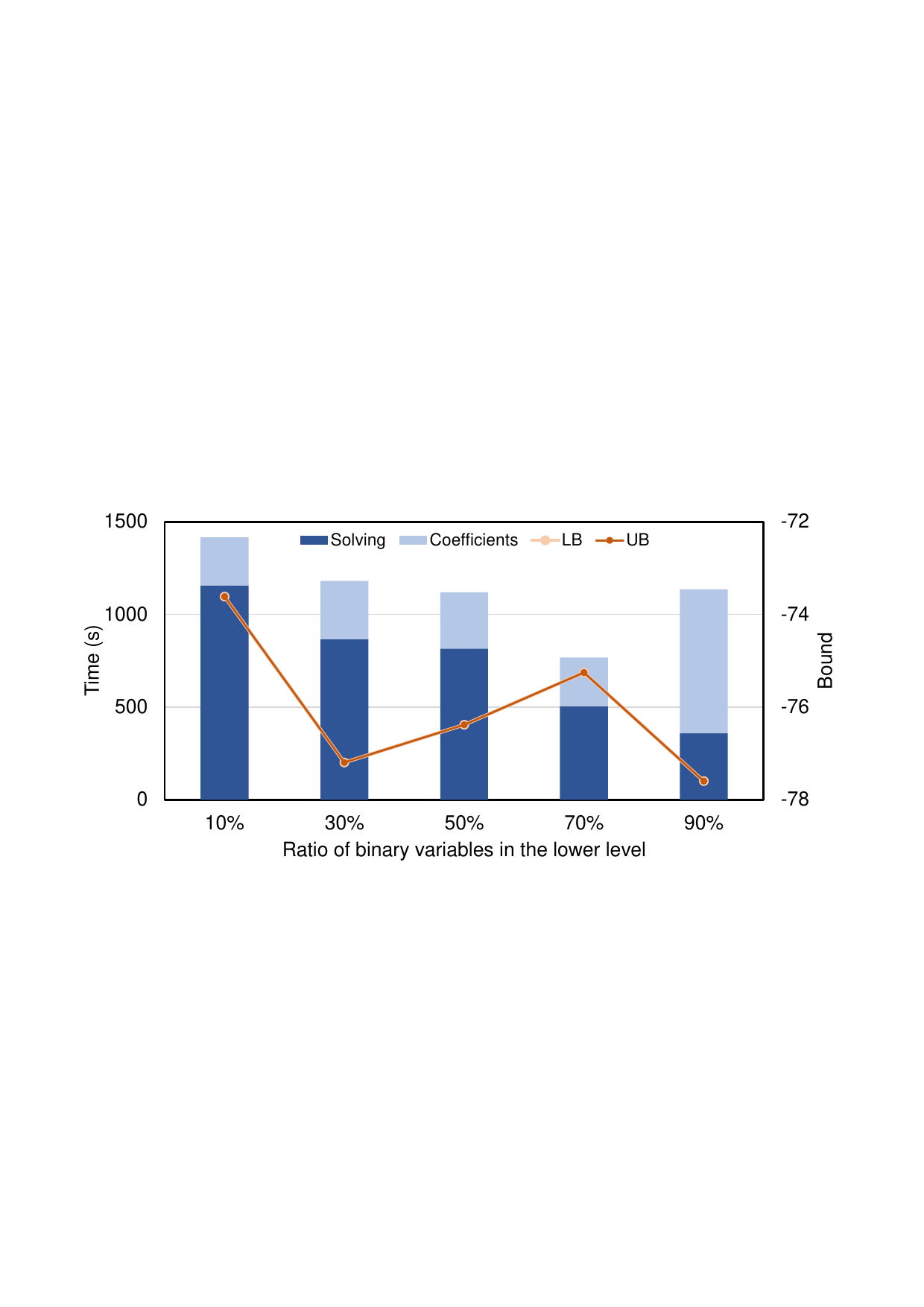}
                    \end{center}
                    \vspace{-4ex}
                    \caption{Sensitivity analysis on the ratio of binary variables in $y$.}
                    \vspace{-2ex}
                    \label{fig:general-binary}
                \end{figure}

                We then conduct a sensitivity analysis on the scale of the lower-level problem and increase $n_y$ from $0.5n_{x}$ to $1.5n_{x}$.
                The number of lower-level constraints also increases according to the instance generation rule (see Appendix \ref{apd:instance-general}).
                The results are reported in Figure \ref{fig:general-scale}.
                We see that our method can always find optimal solutions.
                As the scale of the lower-level problem increases, both the solution time and the time for obtaining penalty coefficients increase quickly.

                We finally conduct a sensitivity analysis on the sparsity of lower-level constraint coefficients and increase the sparsity from $0\%$ to $50\%$.
                When the sparsity is higher than $50\%$, we cannot find a feasible solution within one hour.
                The results are reported in Figure \ref{fig:general-sparsity}.
                We see that when the sparsity is equal to or lower than $30\%$, our method can find optimal solutions and the computational time is comparable.
                But when the sparsity is equal to or higher than $40\%$, the computational burden gets much heavier and we cannot find optimal solutions within 1 hour.

                \begin{figure}[!t]
                    \begin{center}
                        \vspace{-0ex}
                        \includegraphics[width=0.65\columnwidth]{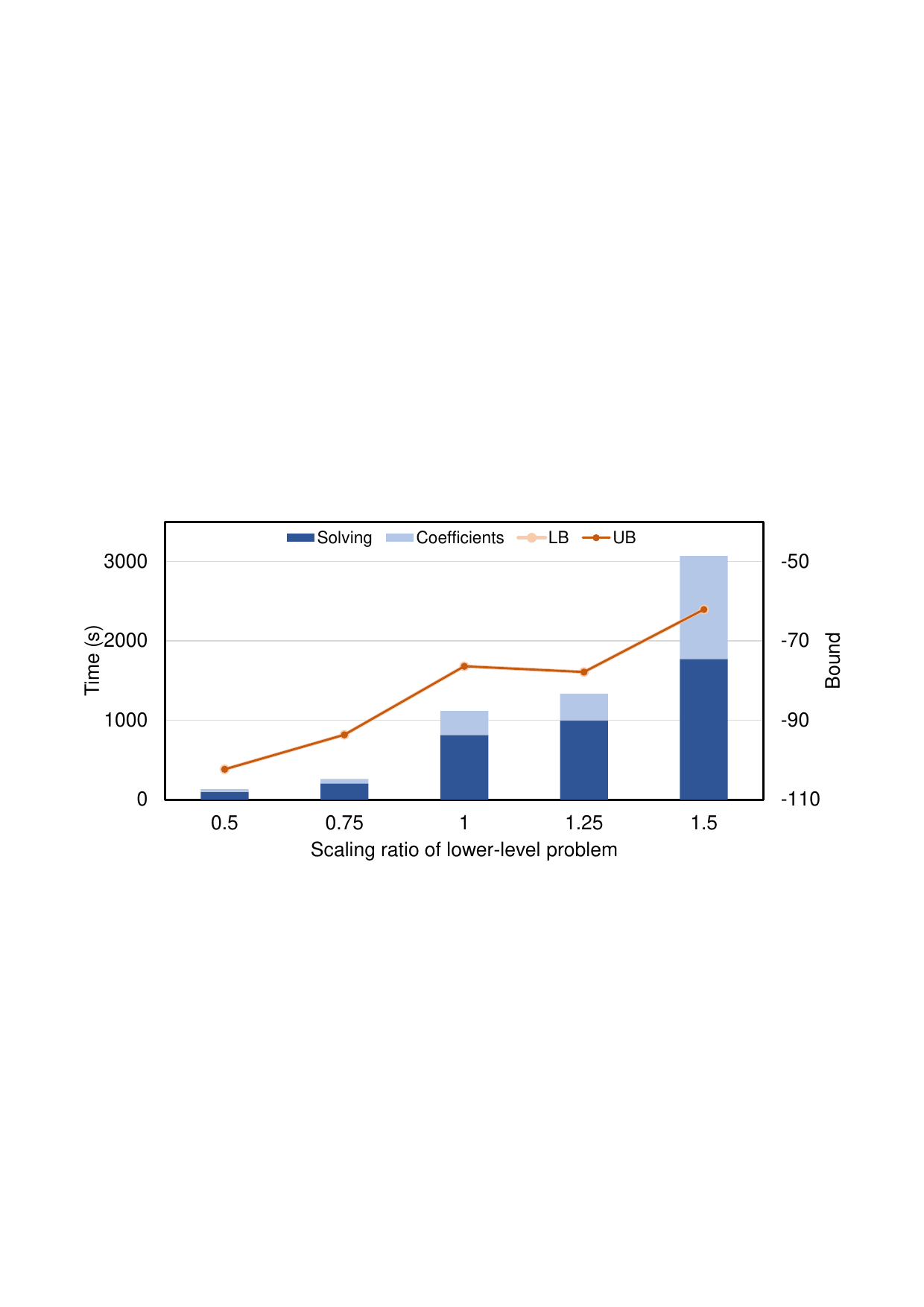}
                    \end{center}
                    \vspace{-4ex}
                    \caption{Sensitivity analysis on the scaling ratio of the lower-level problem.}
                    \vspace{-0ex}
                    \label{fig:general-scale}
                \end{figure}

                \begin{figure}[!t]
                    \begin{center}
                        \vspace{-0ex}
                        \includegraphics[width=0.65\columnwidth]{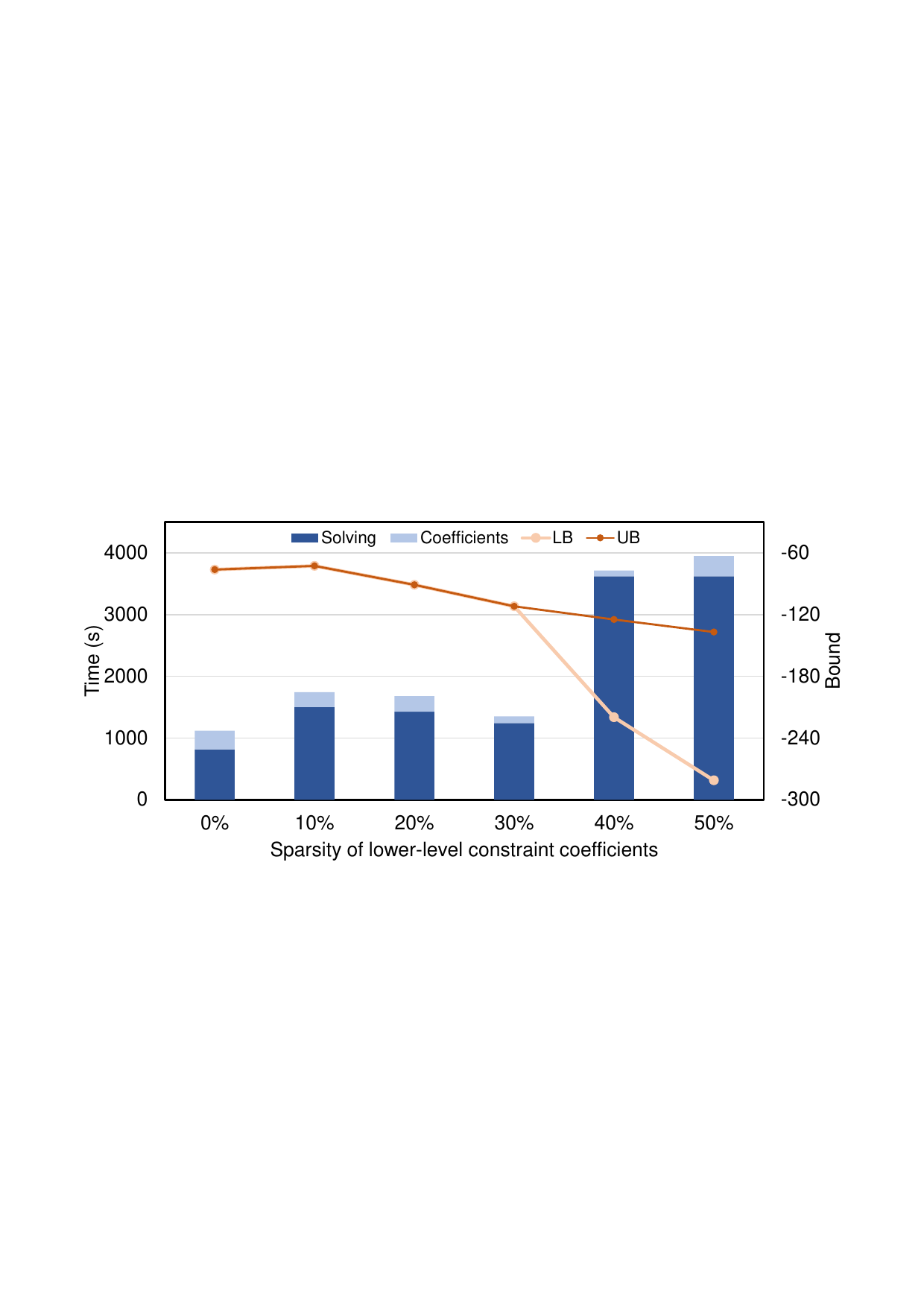}
                    \end{center}
                    \vspace{-4ex}
                    \caption{Sensitivity analysis on the sparsity of lower-level constraint coefficients.}
                    \vspace{-2ex}
                    \label{fig:general-sparsity}
                \end{figure}

        \subsection{Performance in Facility Location Interdiction Problems}
            We consider the facility location interdiction problem, whose lower-level problem possesses a special formulation.
            With this special problem structure, we compare the \texttt{MibS} solver with our Lagrangian-based, special property-based, and decision rule-based methods.

            \subsubsection{Problem Settings}
                Consider the following facility location interdiction problem:
                \begin{align*}
                    \min_{x,y_{1},y_{2}}&~-g(y_{1},y_{2})\\
                    \textrm{s.t.}&~1^{\top}(1-x)\leq B, x\in\{0,1\}^{n}\\
                    &~(y_{1},y_{2})\in\arg\max_{(y_{1},y_{2})\in Y(x)}-g(y_{1},y_{2})\\  
                    &~\text{with} \ \ Y(x)=\left\{
                        (y_{1},y_{2})\left|\begin{aligned}
                            &\sum_{i\in[n]}y_{2,ij}\leq1,\forall j\in[m]\\
                            &\sum_{j\in[m]}d_{j}y_{2,ij}\leq C_{i}x_{i}+C'_{i}y_{1,i},\forall i\in[n]\\
                            &y_{1}\in\{0,1\}^{n}, y_{2}\in\mathbb{R}_{+}^{n\times m}
                        \end{aligned}\right.
                    \right\},
                \end{align*}
                where $n$ is the number of facilities and $m$ is the number of customers.
                The upper level (attacker) decides the interdiction action $x$ to maximize the operation cost $g(\cdot)$, where $x_{i}=0$ indicates that facility $i$ is attacked.
                $B$ is the budget that restricts the maximum number of attacked facilities.
                The lower level (facility operator) decides the repair action $y_{1}$ and the transportation flow $y_{2}$ to minimize the operation cost $g(\cdot)$, where $y_{1,i}=1$ indicates that facility $i$ is repaired and $y_{2,ij}$ indicates the flow from facility $i$ to customer $j$.
                $d_{j}$ is the demand of customer $j$.
                $C_{i}$ is the original capacity of facility $i$ and $C'_{i}$ is the additional capacity through repair.
                The operational cost is given as
                \begin{align*}
                    c_{r}^{\top}y_{1}+\sum_{i\in[n]}\sum_{j\in[m]}c_{t,ij}d_{j}y_{2,ij}+\sum_{j\in[m]}c_{p}d_{j}\left(1-\sum_{i\in[n]}y_{2,ij}\right),
                \end{align*}
                where $c_{r,i}$ is the repair cost of facility $i$, $c_{t, ij}$ is the unit transportation cost from facility $i$ to customer $j$, $c_{p}$ is the penalty coefficient of unmet demands.
                We ignore constant terms and write $g(y_{1},y_{2})$ as
                \begin{align*}
                    g(y_{1},y_{2}):=c_{r}^{\top}y_{1}+\sum_{i\in[n]}\sum_{j\in[m]}c'_{t,ij}d_{j}y_{2,ij},
                \end{align*}
                where $c'_{t,ij}:=c_{t,ij}-c_{p}<0$.
                Following the instance generation rule in Appendix \ref{apd:instance-facility}, we generate 6 instances with $n=5, 10, 15, 20, 25, 30$ for numerical experiments. Notably, the value function of the above facility location interdiction problem admits quasi-submodularity.
                \begin{proposition}[\cite{kanglinRFLP}]\label{pps:CFLI-property}
                    Given any $y_{1}\in\{0,1\}^{n}$, we have
                    \begin{align*}
                        \varphi(x,y_{1}):=\max_{(y_{1},y_{2})\in Y(x)}-g(y_{1},y_{2})
                    \end{align*}
                    is submodular in $x\in\{0,1\}^{n}$.
                \end{proposition}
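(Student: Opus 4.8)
The plan is to fix $y_1 \in \{0,1\}^n$ throughout and recognize $\varphi(\cdot,y_1)$ as the optimal value of a transportation linear program whose dependence on $x$ enters only through the facility supplies. Writing $w_{ij} := -c'_{t,ij} d_j > 0$ and $b_i(x) := C_i x_i + C'_i y_{1,i}$, and noting that fixing $y_1$ leaves only $y_2$ to optimize, we have
\[
\varphi(x,y_1) = -c_r^\top y_1 + V\big(b(x)\big), \quad V(b) := \max_{y_2 \ge 0}\Big\{ \sum_{i,j} w_{ij} y_{2,ij} : \sum_i y_{2,ij} \le 1\ \forall j,\ \sum_j d_j y_{2,ij} \le b_i\ \forall i \Big\}.
\]
Here the demand constraints do not involve $x$, and the constant $-c_r^\top y_1$ does not affect (sub)modularity, so the claim reduces to a statement about how the transportation value $V$ varies with its supply vector.

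Next I would record a composition lemma: if $V:\mathbb{R}^n\to\mathbb{R}$ is submodular in the continuous lattice sense, i.e. $V(a)+V(a')\ge V(a\vee a')+V(a\wedge a')$, then $x\mapsto V(b(x))$ is submodular on $\{0,1\}^n$. The only facts needed are that each $b_i$ is an increasing affine function of the single coordinate $x_i$ (because $C_i\ge 0$), whence $b(x'\vee x'')=b(x')\vee b(x'')$ and $b(x'\wedge x'')=b(x')\wedge b(x'')$ for binary $x',x''$; substituting these identities into the continuous submodularity inequality for $V$ yields the binary submodularity inequality for $\varphi(\cdot,y_1)$. This step is routine.

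The heart of the proof, and the main obstacle, is therefore to show that $V(b)$ is submodular in the supply vector $b$. I would prove this by an exchange (uncrossing) argument. Fix $b',b''$, set $u:=b'\vee b''$ and $\ell:=b'\wedge b''$, and let $y^u,y^\ell$ be optimal flows for supplies $u$ and $\ell$. Partitioning the facilities by whether $b'_i\ge b''_i$, I would form candidate flows $\tilde y',\tilde y''$ that assign the $u$-flow on one part and the $\ell$-flow on the other to $\tilde y'$, and conversely to $\tilde y''$; by construction each facility's usage then respects the $b'$- and $b''$-supplies, and the combined reward equals $r(y^u)+r(y^\ell)$. The delicate point is that this raw reassignment may violate a shared demand cap $\sum_i y_{2,ij}\le 1$ for some customer $j$, and I would repair this by rerouting flow between $\tilde y'$ and $\tilde y''$ along alternating chains of facility–customer pairs, which restores every demand cap while preserving supply feasibility and total reward. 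Concluding $V(u)+V(\ell)=r(y^u)+r(y^\ell)=r(\tilde y')+r(\tilde y'')\le V(b')+V(b'')$ then gives the submodularity of $V$. Alternatively, since $\varphi(\cdot,y_1)$ is an LP value function whose parametric right-hand side passes through a transportation (network) constraint matrix, one could instead invoke the submodularity-preservation conditions of \cite{chen2021preservation,long2024supermodularity}, verifying that the shadow price of each supply constraint is nonincreasing in every other supply. I expect the uncrossing argument to be the cleaner route, with the demand-rebalancing exchange being the step that requires the most care.
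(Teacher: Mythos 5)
First, note that the paper does not prove this proposition at all: it is imported verbatim from the cited reference \cite{kanglinRFLP}, so there is no in-paper argument to compare yours against. Judged on its own terms, your reduction is correct and well organized: fixing $y_1$ turns $\varphi(\cdot,y_1)$ into a constant plus the value $V(b(x))$ of a transportation LP whose supplies $b_i(x)=C_ix_i+C'_iy_{1,i}$ are increasing affine in the single coordinate $x_i$ (with $C_i\ge 0$), and your composition lemma --- that $b(x'\vee x'')=b(x')\vee b(x'')$ and $b(x'\wedge x'')=b(x')\wedge b(x'')$ for binary arguments, so lattice submodularity of $V$ pushes forward to submodularity of $\varphi(\cdot,y_1)$ on $\{0,1\}^{n}$ --- is sound. (After the substitution $u_{ij}=d_jy_{2,ij}$ the inner problem becomes a standard transportation problem with supplies $b_i$, demands $d_j$, and per-unit rewards $-c'_{t,ij}>0$, which makes the reduction even cleaner.)

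The gap is in the one step that carries all the weight: the claim that $V(b)$ is submodular in the supply vector $b$. Your uncrossing construction correctly preserves the supply constraints and the total reward, and you correctly identify that the reassigned flows $\tilde y',\tilde y''$ may violate the shared demand caps $\sum_i y_{2,ij}\le 1$; but the proposed repair --- rerouting along alternating chains so as to ``restore every demand cap while preserving supply feasibility and total reward'' --- is asserted rather than proved, and it is precisely the nontrivial content of the lemma. Moving mass from $\tilde y'_{ij}$ to $\tilde y''_{ij}$ to fix customer $j$ can overload facility $i$ in $\tilde y''$, whose supply bound is $\ell_i$ rather than $u_i$, so one must argue that an augmenting chain terminating at slack always exists; equivalently, one must show that the combined flow $y^{u}+y^{\ell}$ decomposes into a $b'$-feasible and a $b''$-feasible flow. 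This is true --- it is the classical ``substitutes'' property of transportation and assignment value functions (Shapley-type results; in modern language, M-natural-concavity of the transportation value in its supply vector) --- but as written your argument does not establish it. Either carry out the chain argument in full, cite that classical result directly, or (as you note as an alternative) verify the submodularity-preservation conditions of \cite{chen2021preservation,long2024supermodularity}; the last route is evidently the one the paper has in mind, since it states the proposition with a citation to \cite{kanglinRFLP} and no proof.
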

                Proposition \ref{pps:CFLI-property} identifies the submodularity of the lower-level problem for any fixed $y_{1}$ and thus the quasi-submodularity of the lower-level problem.

            \subsubsection{Performance Enhancement through Submodularity}
                We first ignore the repair of facilities in the lower level, i.e., fix $y_{1}=0$, and hence, the lower-level problem becomes a linear program.
                According to Proposition \ref{pps:CFLI-property}, the lower-level value function is submodular in $x\in\{0,1\}^{n}$, which enables both the efficient and exact calculation of Lagrangian coefficients and the submodularity-based valid inequalities.
                In this part, we compare the \texttt{MibS} solver with our submodularity-based method and Lagrangian-based method (exact coefficients).
                The numerical results are provided in Tables \ref{tab:special-MIBS} and \ref{tab:special-Lagrangian}.

                \begin{table}[!htbp]
                    \small
                    \renewcommand{\arraystretch}{1.1}
                    \vspace{-0ex}
                    \caption{Performance comparison with a LP lower-level problem}
                    \vspace{-2ex}
                    \label{tab:special-MIBS}
                    \centering
                    \begin{tabular}{ccccc}
                        \toprule
                        \multirow{2}{*}{$n$} & \multicolumn{4}{c}{\texttt{MibS}}                                                       \\
                        \cmidrule(lr){2-5}
                         & objective & bound  & gap      & \begin{tabular}[c]{@{}c@{}}time   for\\      solution (s)\end{tabular} \\
                        \midrule
                        5                     & 130.99    & 130.99 & 0.00\%   & 1.30                                                                  \\
                        10                    & 321.79    & 321.79 & 0.00\%   & 4.31                                                                  \\
                        15                    & 554.20    & 554.20 & 0.00\%   & 174.67                                                                \\
                        20                    & 736.72    & 0.00   & 100.00\% & 3600.00                                                               \\
                        25                    & 1100.70   & 0.00   & 100.00\% & 3678.23                                                               \\
                        30                    & 1139.10   & 0.00   & 100.00\% & 3738.96 \\
                        \bottomrule
                    \end{tabular}
                \end{table}

                \begin{table}[!htbp]
                    \small
                    \renewcommand{\arraystretch}{1.1}
                    \vspace{-0ex}
                    \caption{Performance comparison with a LP lower-level problem (continued)}
                    \vspace{-2ex}
                    \label{tab:special-Lagrangian}
                    \centering
                    \newcommand{\tabincell}[2]{\begin{tabular}{@{}#1@{}}#2\end{tabular}}
                    \begin{tabular}{cccccccccc}
                        \toprule
                         \multirow{2}{*}{$n$} & \multicolumn{4}{c}{{\color[HTML]{333333} Gurobi - Submodularity}}                                     & \multicolumn{5}{c}{{\color[HTML]{333333} Gurobi - Lagrangian (exact coefficients)}}                                                                                        \\
                        \cmidrule(lr){2-5}\cmidrule(lr){6-10}
                         & objective & bound   & gap     & \begin{tabular}[c]{@{}c@{}}time   for\\      solution (s)\end{tabular} & objective & bound  & gap     & \begin{tabular}[c]{@{}c@{}}time   for\\      solution (s)\end{tabular} & \begin{tabular}[c]{@{}c@{}}time   for\\      $U\&L$ (s)\end{tabular} \\
                        \midrule
                        5                     & 130.99    & 130.99  & 0.00\%  & 0.28                                                                  & 130.99    & 130.99 & 0.00\%  & 0.16                                                                  & 0.18                                                                \\
                        10                    & 321.79    & 321.79  & 0.00\%  & 1.87                                                                  & 321.79    & 321.79 & 0.00\%  & 0.92                                                                  & 1.09                                                                \\
                        15                    & 554.20    & 554.20  & 0.00\%  & 91.62                                                                 & 554.20    & 554.20 & 0.00\%  & 38.38                                                                 & 3.69                                                                \\
                        20                    & 736.72    & 736.72  & 0.00\%  & 1243.32                                                               & 736.72    & 736.72 & 0.00\%  & 484.04                                                                & 8.26                                                                \\
                        25                    & 1130.08   & 913.22  & 19.19\% & 3642.34                                                               & 1094.46   & 946.42 & 13.53\% & 3603.05                                                               & 17.61                                                               \\
                        30                    & 1125.29   & 1035.04 & 8.02\%  & 3601.14                                                               & 1129.72   & 940.38 & 16.76\% & 3603.66                                                               & 32.43 \\
                        \bottomrule
                    \end{tabular}
                \end{table}

                From Tables \ref{tab:special-MIBS} and \ref{tab:special-Lagrangian}, we can see that when $n\leq15$, all three methods can find optimal solutions, but both the Lagrangian-based and submodularity-based methods show higher computational efficiency.
                When $n=20$, \texttt{MibS} is unable to find a good lower bound within one hour and thus gives a 100$\%$ gap, while the Lagrangian-based and submodularity-based methods is able to find the optimal solution in about 20 minutes and 8 minutes, respectively.
                Comparing the Lagrangian-based and submodularity-based methods, when $n\leq20$, the Lagrangian-based method shows relatively higher computation efficiency.
                When $n\geq25$, all three methods can not find optimal solutions.
                However, \texttt{MibS} provides a much weaker bound and thus a large optimization gap, while the Lagrangian-based and submodularity-based methods prove significantly smaller optimality gaps.
                The above results and analysis validate the effectiveness of our methods and the benefit of incorporating submodularity.

            \begin{table}[!b]
                    \small
                    \renewcommand{\arraystretch}{1.1}
                    \vspace{-0ex}
                    \caption{Performance comparison with a MILP lower-level problem}
                    \vspace{-2ex}
                    \label{tab:quasi-MIBS}
                    \centering
                        \begin{tabular}{cccccccccc}
                        \toprule
                        \multirow{2}{*}{$n$}                    & \multicolumn{4}{c}{\texttt{MibS}}                                                        & \multicolumn{5}{c}{{\color[HTML]{333333} Gurobi - Lagrangian (relaxed coefficients)}}                                                                                        \\
                        \cmidrule(lr){2-5}\cmidrule(lr){6-10}
                         & objective & bound   & gap      & \begin{tabular}[c]{@{}c@{}}time   for\\      solution (s)\end{tabular} & objective & bound   & gap      & \begin{tabular}[c]{@{}c@{}}time   for\\      solution (s)\end{tabular} & \begin{tabular}[c]{@{}c@{}}time   for\\      $U\&L$ (s)\end{tabular} \\
                        \midrule
                        5                     & 205.95    & 205.95  & 0.00\%   & 0.68                                                                  & 205.95    & 205.95  & 0.00\%   & 0.34                                                                  & 0.39                                                                \\
                        10                    & 451.49    & 451.49  & 0.00\%   & 23.32                                                                 & 451.49    & 451.49  & 0.00\%   & 14.82                                                                 & 2.61                                                                \\
                        15                    & 682.21    & 682.21  & 0.00\%   & 1884.33                                                               & 682.21    & 682.21  & 0.00\%   & 1390.63                                                               & 9.38                                                                \\
                        20                    & 919.71    & $-$97.61  & 110.61\% & 3629.90                                                               & 928.27    & $-$97.61  & 110.52\% & 3637.22                                                               & 21.77                                                               \\
                        25                    & 1210.92   & $-$115.75 & 109.56\% & 3668.27                                                               & 1229.08   & $-$115.75 & 109.42\% & 3604.59                                                               & 47.88                                                               \\
                        30                    & 1438.27   & $-$129.90 & 109.03\% & 3737.12                                                               & 1461.22   & $-$129.90 & 108.89\% & 3730.72                                                               & 93.24 \\
                        \bottomrule
                    \end{tabular}
                \end{table}

                \begin{table}[!htbp]
                    \small
                    \renewcommand{\arraystretch}{1.1}
                    \vspace{-0ex}
                    \caption{Performance comparison with a MILP lower-level problem (continued)}
                    \vspace{-2ex}
                    \label{tab:quasi-EPI}
                    \centering
                    \newcommand{\tabincell}[2]{\begin{tabular}{@{}#1@{}}#2\end{tabular}}
                    \begin{tabular}{ccccccccc}
                        \toprule
                        \multirow{2}{*}{$n$} & \multicolumn{4}{c}{Gurobi - Quasi-submodularity}                                                                                         & \multicolumn{4}{c}{Gurobi - Lagrangian (exact coefficients)}                                                                          \\
                        \cmidrule(lr){2-5}\cmidrule(lr){6-9}
                                             & objective        & bound      & gap              & \begin{tabular}[c]{@{}c@{}}time   for\\      solution (s)\end{tabular} & objective        & bound      & gap              & \begin{tabular}[c]{@{}c@{}}time   for\\      solution (s)\end{tabular} \\
                        \midrule
                        5                    & 205.95         & 205.95   & 0                & 1.37                                                                  & 205.95         & 205.95   & 0                & 2.53                                                                  \\
                        10                   & 451.49          & 451.49    & 0                & 88.59                                                                 & 451.49          & 451.49    & 0                & 147.67                                                                \\
                        15                   & 684.26         & 638.96 & 6.62\%           & 3606.42                                                              & 685.75         & 636.85 & 7.13\%           & 3601.58                                                               \\
                        20                   & 934.15         & 752.55 & 19.44\%          & 3633.39                                                             & 941.47         & 719.94 & 23.53\%          & 3611.91                                                               \\
                        25                   & 1223.46         & 918.82 & 24.90\%          & 3700.69                                                               & 1227.29        & 901.94 & 26.51\%          & 3651.38                                                               \\
                        30                   & 1440.19 & 1078.66  & 25.10\% & 3626.86                                                             & 1436.58 & 1059.96  & 26.22\% & 3617.68 \\
                        \bottomrule
                    \end{tabular}
                \end{table}

            \subsubsection{Performance Enhancement through Quasi-Submodularity}
                We consider the facility location interdiction problem without fixing $y_{1}$, and hence, the lower-level problem is a MILP.
                According to Proposition \ref{pps:CFLI-property} and Definition \ref{def:quasi}, the lower-level value function is quasi-submodular in $x\in\{0,1\}^{n}$, which enables both the quasi-submodularity based valid inequality and the exact calculation of Lagrangian coefficients with fixed $y_{1}$.
                In this part, we compare the \texttt{MibS} solver with our Lagrangian-based method (relaxed coefficients, not using quasi-submodularity), quasi-submodularity based method, and Lagrangian-based method (exact coefficients, using quasi-submodularity).
                The numerical results are provided in Tables \ref{tab:quasi-MIBS} and \ref{tab:quasi-EPI}.

                When we do not utilize quasi-submodularity, the results are provided in Tables \ref{tab:quasi-MIBS}.
                We can see that when $n\leq15$, both \texttt{MibS} and the Lagrangian-based method can find optimal solutions, while the Lagrangian-based method shows higher computational efficiency.
                When $n\geq20$, both methods can not find optimal solutions within one hour, and the Lagrangian-based method proves a slightly smaller optimality gap.

                In contrast, when we utilize quasi-submodularity, the results are provided in Tables \ref{tab:quasi-EPI}.
                We can see that when $n\leq10$, both the quasi-submodularity based method and the exact Lagrangian-based method consume more time in finding optimal solutions than \texttt{MibS} or the relaxed Lagrangian-based method, and for $n \geq 15$, they cannot even find optimal solutions within one hour.
                This results from the longer time in generating valid inequalities that depend on the incumbent $\hat{y}_{1}$ and, more specifically, a heavier computational burden in calculating $\varphi(x,\hat{y}_{1})$.
                However, when $n\geq20$, the quasi-submodularity based method and the exact Lagrangian-based method prove a significantly smaller optimality gap than that of \texttt{MibS} or the relaxed Lagrangian-based method.
                This demonstrates the strength of these valid inequalities, which significantly improves the lower bound.
                In particular, the quasi-submodularity based method proves a slightly smaller optimality gap than the exact Lagrangian-based method.
                The above results and analysis validate the superiority of our methods and the benefit of incorporating quasi-submodularity in larger-scale instances.

            \begin{table}[!b]
                    \small
                    \renewcommand{\arraystretch}{1.1}
                    \vspace{-0ex}
                    \caption{Performance with additional LDR-based valid inequalities}
                    \vspace{-2ex}
                    \label{tab:quasi-LDR}
                    \centering
                    \begin{tabular}{cccccc}
                        \toprule
                        \multirow{2}{*}{$n$}                      & \multicolumn{5}{c}{Gurobi - Lagrangian (relaxed coefficients) + LDR}                                                                        \\
                        \cmidrule(lr){2-6}
                         & objective & bound   & gap     & \begin{tabular}[c]{@{}c@{}}time   for\\      solutions (s)\end{tabular} & \begin{tabular}[c]{@{}c@{}}time for\\      $U/L$ (s)\end{tabular} \\
                        \midrule
                        5                     & 205.95    & 205.95  & 0  & 2.84                                                                  & 0.39                                                                        \\
                        10                    & 451.49    & 451.49  & 0  & 115.62                                                                & 2.55                                                                        \\
                        15                    & 685.42    & 635.74  & 7.25\%  & 3665.29                                                               & 8.63                                                                        \\
                        20                    & 965.99    & 731.84  & 24.24\% & 3603.08                                                               & 23.31                                                                       \\
                        25                    & 1224.82   & 903.65  & 26.22\% & 3620.37                                                               & 47.92                                                                       \\
                        30                    & 1520.38   & 1074.45 & 29.33\% & 3631.24                                                               & 100.79 \\
                        \bottomrule
                    \end{tabular}
                \end{table}

            \subsubsection{Performance Enhancement through Linear Decision Rules}
                The LDR-based valid inequality does not rely on special properties and holds in general for facility location interdiction problems.
                In this part, we ignore the quasi-submodularity of the lower-level problem and consider the LDR-based valid inequalities, in addition to the Lagrangian-based valid inequality with relaxed coefficients.
                The results are shown in Table \ref{tab:quasi-LDR} and are compared to Table \ref{tab:quasi-MIBS}.

                We can see that, with additional LDR-based valid inequalities, the solution time gets longer when $n\leq10$ and we cannot get the optimal solution within one hour when $n=15$.
                The results are reasonable because additional computation is required to obtain LDR-based valid inequalities.
                When $n\geq20$, we cannot get optimal solutions within one hour, either with or without LDR-based valid inequalities.
                However, with LDR-based valid inequalities, the lower bound gets significantly improved, and accordingly the gap is reduced significantly.
                The above results and analysis validate the effectiveness of LDR-based valid inequalities in improving the lower bound in large-scale general instances.

            \subsubsection{Performance Enhancement Through Trained Decision Rules}
                Finally, we 
                demonstrate the trained decision rule-based valid inequality.
                By default, we use 1,000 samples and adopt the Adam algorithm \cite{kingma2014adam} for training.
                We set the training epochs to be 1,000 and the learning rate to be 0.01 with a 0.001 decay.
                We consider two approaches:
                i) replacing the optimality condition~\eqref{eq:optimality} with the trained decision rule-based valid inequality, and
                ii) add the trained decision rule-based valid inequality based on the relaxed Lagrangian-based method.
                The results are reported in Table \ref{tab:quasi-learning} and are compared to Table \ref{tab:quasi-MIBS}.

            \begin{table}[!htbp]
                    \small
                    \renewcommand{\arraystretch}{1.1}
                    \vspace{-0ex}
                    \caption{Results of the proposed learning-based valid inequality}
                    \vspace{-2ex}
                    \label{tab:quasi-learning}
                    \centering
                    \begin{tabular}{ccccccccc}
                        \toprule
                                              & \multicolumn{4}{c}{{\color[HTML]{000000} Gurobi - Trained decision rule}}                       & \multicolumn{4}{c}{+ Lagrangian (relaxed coefficients)}                                              \\
                        \cmidrule(lr){2-5}\cmidrule(lr){6-9}
                        \multirow{-2}{*}{$n$} & objective & bound   & gap     & \begin{tabular}[c]{@{}c@{}}time for\\      solutions (s)\end{tabular} & objective & bound   & gap     & \begin{tabular}[c]{@{}c@{}}time for\\      solutions (s)\end{tabular} \\
                        \midrule
                        5                     & 205.95    & 205.95  & 0.00\%  & 0.15                                                                & 205.95    & 205.95  & 0.00\%  & 0.21                                                                \\
                        10                    & 463.37    & 441.83  & 4.65\%  & 0.68                                                                & 451.49    & 451.49  & 0.00\%  & 2.23                                                                \\
                        15                    & 694.64    & 639.78  & 7.90\%  & 4.92                                                                & 682.21    & 682.21  & 0.00\%  & 261.41                                                              \\
                        20                    & 933.08    & 844.53  & 9.49\%  & 82.20                                                               & 928.55    & 806.36  & 13.16\% & 3666.84                                                             \\
                        25                    & 1216.52   & 1101.26 & 9.48\%  & 3601.01                                                             & 1218.28   & 1002.56 & 17.71\% & 3618.68                                                             \\
                        30                    & 1451.12   & 1137.36 & 21.62\% & 3600.40                                                             & 1440.85   & 1071.36 & 25.64\% & 3601.36          \\
                        \bottomrule
                    \end{tabular}
                \end{table}

                We can see that the first approach cannot give optimal solutions even in small instances ($n\leq15$), but can significantly reduce the computation time and provide good bounds.
                In large instances ($n\geq15$), the first approach proves a smaller optimality gap than that of the relaxed Lagrangian methods and that of \texttt{MibS}.
                The second approach aims to find optimal solutions.
                As compared to the relaxed Lagrangian methods or \texttt{MibS}, the second approach obtains optimal solutions within shorter time in small instances and proves a much smaller optimality gap in large instances.
                The above results and analysis validate the effectiveness of trained decision rule-based valid inequalities.

    \section{Conclusions}\label{sec:conclusions}
        We consider bilevel mixed-integer linear programs with binary tender and develop exact algorithms based on valid inequalities.
        We first propose a family of Lagrangian-based valid inequalities as the baseline method to achieve global optimum and discuss the calculation of the Lagrangian coefficients.
        To further enhance the computational effectiveness, we then investigate valid inequalities based on submodularity and supermodularity and extend to cases of quasi-submodularity and quasi-supermodularity. In all these special cases, we derive more efficient calculation of the cut coefficients for the Lagrangian-based valid inequalities.
        Furthermore, we explore linear decision rule-based valid inequalities as a general enhancement method and exploit past solves of the lower-level formulation to obtain nonlinear decision rule-based valid inequalities through neural networks.
        Finally, we conduct extensive numerical experiments in general bilevel MILP instances and facility location interdiction problems, with the state-of-the-art solver \texttt{MibS} as the benchmark.
        The results demonstrate that the Lagrangian-based methods outperform \texttt{MibS} in both general problems and facility location interdiction problems.
        When (quasi-)submodularity exists, the \text{(quasi-)submodularity}-based methods can speed up branch-and-cut even more.
        Even when such special properties are absent, both linear decision rule-based and trained (nonlinear) decision rule-based methods can significantly improve the lower bound and reduce the optimality gap significantly in large-scale instances.

    \section*{Declarations}
        \paragraph{Funding and/or Conflicts of interest/Competing interests} Ruiwei Jiang is supported in part by the U.S. Air Force Office of Scientific Research under grant No. FA9550-23-1-0323. The authors declare no competing interests.

	\bibliographystyle{unsrt}
	\bibliography{reference}

    \clearpage
    \begin{appendices}
        \section{Baseline Algorithm}
            \subsection{Proof of Lemma \ref{lem:LR}}\label{apd:lem:LR}
                \begin{proof}\color{black}
                    We consdier two cases according to the feasibility of $x$.

                    Case 1: If $x\notin X_{LF}$,
                        we have $\phi(x)=-\infty$ according to \eqref{eq:value function}, and hence, $L_{\ell}(x, \lambda)\geq-\infty$ due to the weak duality.
                        We further consider two cases.\\
                        Case 1.1: If $L_{\ell}(x, \lambda)=-\infty$,
                            which means $\phi(z)$ is infeasible for all $z\in[0,1]^{n_{x}}$, we have
                            \begin{equation*}
                                \min_{\lambda\in\mathbb{R}^{n_{x}}}L_{\ell}(x,\lambda)=-\infty=\phi(x).
                            \end{equation*}
                        Case 1.2: If $L_{\ell}(x, \lambda)>-\infty$,
                            supposing that $z^*$ is the optimal solution to the right-hand solution of \eqref{eq:LR}, we have
                            \begin{equation*}
                                L_{\ell}(x,\lambda)=\phi(z^*)-\sum_{i\in[n_{x}]}\lambda_{i}\left(x_{i}-z_{i}^*\right).
                            \end{equation*}
                            Considering that $x$ is infeasible to $\phi$ and thus infeasible to $\psi$ while $z^*$ is feasible to $\psi$, $z^*\neq x$ must hold.
                            Hence, there exists $i\in[n_{x}]$ such that $x_{i}-z_{i}^*\neq0$.
                            If $x_{i}=1$, we have $x_{i}-z_{i}^*>0$.
                            When $\lambda_{i}\rightarrow+\infty$, we have $\lambda_{i}(x_{i}-z_{i}^*)\rightarrow+\infty$.
                            If $x_{i}=0$, we have $x_{i}-z_{i}^*<0$.
                            When $\lambda_{i}\rightarrow-\infty$, we have $\lambda_{i}(x_{i}-z_{i}^*)\rightarrow+\infty$.
                            With the bounded $\phi(z^*)$, we have $L_{\ell}(x,\lambda)\rightarrow-\infty$.
                            Hence, we have
                            \begin{equation*}
                                \min_{\lambda\in\mathbb{R}^{n_{x}}}L_{\ell}(x,\lambda)=-\infty=\phi(x).
                            \end{equation*}

                    Case 2: If $x\in X_{LF}$,
                        we have $\phi(x)>-\infty$ according to \eqref{eq:value function}, and hence, $L_{\ell}(x, \lambda)>-\infty$ due to the weak duality.
                        Supposing that $z^{*}(x,\lambda)$ is the optimal value to the right-hand side of \eqref{eq:LR}, we have
                        \begin{equation*}
                            L_{\ell}(x, \lambda)=\phi\big(z^{*}(x,\lambda)\big)-\sum_{i\in[n_{x}]}\lambda_{i}\big(x_{i}-z_{i}^{*}(x,\lambda)\big),
                        \end{equation*}
                        where for all $i\in[n_{x}]$, $x_{i}-z_{i}^{*}(x,\lambda)\geq0$ if $x_{i}=1$ and $x_{i}-z_{i}^{*}(x,\lambda)\leq0$ if $x_{i}=0$.\\
                        First, we prove that there exists $\lambda^{*}\in\mathbb{R}^{n_{x}}$ such that for all $\lambda$ that satisfies \eqref{eq:LR-strong-condition}, we have $x-z^{*}(x,\lambda)=0$.
                        Suppose the contrary that for all $\lambda^{*}\in\mathbb{R}^{n_{x}}$, there exists $\lambda$ that satisfies \eqref{eq:LR-strong-condition}, there exists $i\in[n_{x}]$ such that $x_{i}-z_{i}^{*}(x,\lambda)\neq0$.
                        If $x_{i}=1$, we have $\lambda_{i}\geq\lambda_{i}^{*}$ and $x_{i}-z_{i}^{*}(x,\lambda)>0$.
                        When $\lambda_{i}^{*}\rightarrow+\infty$, we have $\lambda_{i}=+\infty$ and then $\lambda_{i}\big(x_{i}-z_{i}^{*}(x,\lambda)\big)=+\infty$.
                        If $x_{i}=0$, we have $\lambda_{i}\leq\lambda_{i}^{*}$ and $x_{i}-z_{i}^{*}(x,\lambda)<0$.
                        When $\lambda_{i}^{*}\rightarrow-\infty$, we have $\lambda_{i}=-\infty$ and then $\lambda_{i}\big(x_{i}-z_{i}^{*}(x,\lambda)\big)=+\infty$.
                        Hence, $L_{\ell}(x, \lambda)=-\infty$, which contradicts with $L_{\ell}(x, \lambda)>-\infty$, and we complete this part of the proof.
                        Considering that the given $x$ may influence the value of $\lambda^{*}$, we use $\lambda^{*}(x)$ in the following for clarity.\\
                        Second, because for all $\lambda$ that satisfies \eqref{eq:LR-strong-condition}, $x-z^{*}(x,\lambda)=0$ holds, and hence, we have $L_{\ell}(x,\lambda)=\phi(x)$.
                        Furthermore, we have
                        \begin{equation*}
                            \min_{\lambda\in\mathbb{R}^{n_{x}}}L_{\ell}(x,\lambda)\leq\min_{\eqref{eq:LR-strong-condition}}L_{\ell}(x,\lambda)=\phi(x).
                        \end{equation*}
                        Combining the weak duality that $\phi(x)\leq\min_{\lambda\in\mathbb{R}^{n_{x}}}L_{\ell}(x,\lambda)$, we have $\phi(x)=\min_{\lambda\in\mathbb{R}^{n_{x}}}L_{\ell}(x,\lambda)$.
                        This completes the overall proof.
                \end{proof}

            \subsection{Proof of Corollary \ref{crl:LR}}\label{apd:crl:LR}
                \begin{proof}\color{black}
                    According to Lemma \ref{lem:LR}, for any $x\in\{0,1\}^{n_{x}}\cap X_{LF}$, there exists $\lambda^*(x)$ such that for all $\lambda$ that satisfies \eqref{eq:LR-strong-condition}, we have $\phi(x)=L_{\ell}(x,\lambda)$.
                    For all $i\in[n_{x}]$, we define $U_{i}:=\max_{x\in\{0,1\}^{n_{x}}\cap X_{LF}}\lambda_{i}^{*}(x)$ and $L_{i}:=\min_{x\in\{0,1\}^{n_{x}}\cap X_{LF}}\lambda_{i}^{*}(x)$.
                    Then, for any $x\in\{0,1\}^{n_{x}}\cap X_{LF}$ and any $\lambda$ that satisfies \eqref{eq:LR-strong-condition2}, it also holds that $\lambda$ satisfies \eqref{eq:LR-strong-condition} and thus  $\phi(x)=L_{\ell}(x,\lambda)$.
                \end{proof}

            \subsection{Proof of Proposition \ref{pps:LR-cut}}\label{apd:pps:LR-cut}
                \begin{proof}\color{black}
                    Because $\hat{\lambda}(1-x)$ satisfies \eqref{eq:LR-strong-condition2}, according to Corollary \ref{crl:LR}, for any $x\in[0,1]^{n_{x}}\cap X_{LF}$, we have $\phi(x)=L_{\ell}(x,\hat{\lambda}(1-x))$.
                    In the VFR \eqref{eq:VFR}, the lower-level feasibility $X_{LF}$ has been satisfied by \eqref{eq:lower feasibility}, we can directly replace $\phi(x)$ in \eqref{eq:optimality} by $L_{\ell}(x,\hat{\lambda}(1-x))$ as
                    \begin{equation*}
                        d_{\ell}^{\top}y\geq L_{\ell}(x,\hat{\lambda}(1-x))=\max_{z\in[0,1]^{n_{x}}}\left\{\psi(z)-(\hat{\lambda}(1-x))^{\top}(x-z)\right\},
                    \end{equation*}
                    which implies
                    \begin{equation*}
                        d_{\ell}^{\top}y\geq\psi(z)-(\hat{\lambda}(1-x))^{\top}(x-z),~\forall z\in[0,1]^{n_{x}}.
                    \end{equation*}
                    Hence, for any $z\in[0,1]^{n_{x}}$, \eqref{eq:LR-cut} is valid for \eqref{eq:optimality}.

                    For any $z\in\{0,1\}^{n_{x}}\subset[0,1]^{n_{x}}$, because $z$ is binary-valued, \eqref{eq:LR-cut} can be reformulated as
                    \begin{equation*}
                        \begin{aligned}
                            d_{\ell}^{\top}y&\geq\phi(z)-(\hat{\lambda}(1-x))^{\top}(x-z)\\
                            &=\phi(z)-\sum_{i\in[n_{x}]}[U_{i}x_{i}+L_{i}(1-x_{i})](x_{i}-z_{i})\\
                            &=\phi(z)-\sum_{i\in[n_{x}]}[U_{i}(1-z_{i})+L_{i}z_{i}](x_{i}-z_{i})\\
                            &=\phi(z)-(\hat{\lambda}(z))^{\top}(x-z).
                        \end{aligned}
                    \end{equation*}
                    By fixing $x$ on the right-hand side as $z$, we obtain
                    \begin{equation*}
                        d_{\ell}^{\top}y\geq \phi(z),
                    \end{equation*}
                    which implies that \eqref{eq:LR-cut2} is valid and tight for \eqref{eq:optimality}.
                    This completes the proof.
                \end{proof}

            \subsection{Proof of Lemma \ref{lem:ALR}}\label{apd:lem:ALR}
                \begin{proof}
                    We consider two cases according to the feasibility of $x$.

                    Case 1: If $x\notin X_{LF}$,
                        we have $\phi(x)=-\infty$ according to \eqref{eq:value function}, and hence, $L_{a}(x,\lambda, \rho)\geq-\infty$ due to the weak duality.
                        We further consider two cases.\\
                        Case 1.1: If $L_{a}(x,\lambda, \rho)=-\infty$,
                            which means $\phi(z)$ is infeasible for all $z\in[0,1]^{n_{x}}$, we have
                            \begin{equation*}
                                \min_{\rho\in\mathbb{R}_{+}}L_{a}(x,\lambda,\rho)=-\infty=\phi(x).
                            \end{equation*}
                        Case 1.2: If $L_{a}(x,\lambda, \rho)>-\infty$,
                            supposing that $z^*$ is the optimal solution to the right-hand solution of \eqref{eq:ALR}, we have
                            \begin{equation*}
                                L_{a}(x,\lambda,\rho)=\phi(z^*)-\sum_{i\in[n_{x}]}\lambda_{i}\left(x_{i}-z_{i}^*\right)-\rho\left(1^{\top}x+1^{\top}z^*-2x^{\top}z^*\right).
                            \end{equation*}
                            Considering that $x$ is infeasible to $\phi$ while $z^*$ is feasible to $\phi$, $z^*\neq x$ must hold, and hence, $1^{\top}x+1^{\top}z^*-2x^{\top}z^*\geq\|x-z^*\|^2_2>0$ and there exists $i\in[n_{x}]$ such that $x_{i}-z_{i}^*\neq0$.
                            When $\rho\rightarrow+\infty$, we have $\rho(1^{\top}x+1^{\top}z^*-2x^{\top}z^*)\rightarrow+\infty$.
                            If $x_{i}=1$, we have $x_{i}-z_{i}^*>0$.
                            When $\lambda_{i}\rightarrow+\infty$, we have $\lambda_{i}(x_{i}-z_{i}^*)\rightarrow+\infty$.
                            If $x_{i}=0$, we have $x_{i}-z_{i}^*<0$.
                            When $\lambda_{i}\rightarrow-\infty$, we have $\lambda_{i}(x_{i}-z_{i}^*)\rightarrow+\infty$.
                            With the bounded $\phi(z^*)$, we have $L_{a}(x,\lambda,\rho)\rightarrow-\infty$.
                            Hence, we have
                            \begin{equation*}
                                \min_{\rho\in\mathbb{R}_{+}}L_{a}(x,\lambda,\rho)=-\infty=\phi(x).
                            \end{equation*}

                    Case 2: If $x\in X_{LF}$,
                        we have $\phi(x)>-\infty$ according to \eqref{eq:value function}, and hence, $L_{a}(x, \lambda,\rho)>-\infty$ due to the weak duality.
                        Supposing that $z^{*}(x,\lambda,\rho)$ is the optimal value to the right-hand side of \eqref{eq:ALR}, we have
                        \begin{equation*}
                            L_{a}(x, \lambda,\rho)=\phi\big(z^{*}(x,\lambda,\rho)\big)-\sum_{i\in[n_{x}]}\lambda_{i}\big(x_{i}-z_{i}^{*}(x,\lambda,\rho)\big)-\rho\left(1^{\top}x+1^{\top}z^*(x,\lambda,\rho)-2x^{\top}z^*(x,\lambda,\rho)\right),
                        \end{equation*}
                        where
                        \begin{equation*}
                            1^{\top}x+1^{\top}z^*(x,\lambda,\rho)-2x^{\top}z^*(x,\lambda,\rho)\geq\|x-z^*(x,\lambda,\rho)\|^2_2\geq0
                        \end{equation*}
                        and for all $i\in[n_{x}]$, $x_{i}-z_{i}^{*}(x,\lambda,\rho)\geq0$ if $x_{i}=1$ and $x_{i}-z_{i}^{*}(x,\lambda,\rho)\leq0$ if $x_{i}=0$.\\
                        First, we prove that there exists $\lambda^{*}\in\mathbb{R}^{n_{x}}$ and $\rho^{*}\in\mathbb{R}_{+}$ such that for all $\lambda$ that satisfies \eqref{eq:ALR-strong-condition} and all $\rho\geq\rho^{*}$, we have $x-z^{*}(x,\lambda,\rho)=0$.
                        Suppose the contrary that for all $\lambda^{*}\in\mathbb{R}^{n_{x}}$ and $\rho^{*}\in\mathbb{R}_{+}$, there exists $\lambda$ that satisfies \eqref{eq:LR-strong-condition} and $\rho\geq\rho^{*}$, there exists $i\in[n_{x}]$ such that $x_{i}-z_{i}^{*}(x,\lambda,\rho)\neq0$.
                        Because $x_{i}-z_{i}^{*}(x,\lambda,\rho)\neq0$, we have $1^{\top}x+1^{\top}z^*(x,\lambda,\rho)-2x^{\top}z^*(x,\lambda,\rho)>0$.
                        When $\rho^{*}\rightarrow+\infty$, we have $\rho=+\infty$ and then $\rho\big(1^{\top}x+1^{\top}z^*(x,\lambda,\rho)-2x^{\top}z^*(x,\lambda,\rho)\big)=+\infty$.
                        If $x_{i}=1$, we have $\lambda_{i}\geq\lambda_{i}^{*}$ and $x_{i}-z_{i}^{*}(x,\lambda,\rho)>0$.
                        When $\lambda_{i}^{*}\rightarrow+\infty$, we have $\lambda_{i}=+\infty$ and then $\lambda_{i}\big(x_{i}-z_{i}^{*}(x,\lambda,\rho)\big)=+\infty$.
                        If $x_{i}=0$, we have $\lambda_{i}\leq\lambda_{i}^{*}$ and $x_{i}-z_{i}^{*}(x,\lambda,\rho)<0$.
                        When $\lambda_{i}^{*}\rightarrow-\infty$, we have $\lambda_{i}=-\infty$ and then $\lambda_{i}\big(x_{i}-z_{i}^{*}(x,\lambda,\rho)\big)=+\infty$.
                        Hence, $L_{a}(x, \lambda,\rho)=-\infty$ which contradicts with $L_{a}(x, \lambda,\rho)>-\infty$ and we complete this part of the proof.
                        Considering that the given $x$ may influence the value of $\lambda^{*}$ and $\rho^*$, we use $\lambda^{*}(x)$ and $\rho^{*}(x)$ in the following for clarity.\\
                        Second, because for all $\lambda$ that satisfies \eqref{eq:ALR-strong-condition} and all $\rho\geq\rho^{*}$, $x-z^{*}(x,\lambda,\rho)=0$ holds, and hence, we have $L_{a}(x,\lambda,\rho)=\phi(x)$.
                        Furthermore, we have
                        \begin{equation*}
                            \min_{\lambda\in\mathbb{R}^{n_{x}},\rho\in\mathbb{R}_{+}}L_{a}(x,\lambda,\rho)\leq\min_{\eqref{eq:ALR-strong-condition},\rho\geq\rho^{*}(x)}L_{a}(x,\lambda,\rho)=\phi(x).
                        \end{equation*}
                        Combining the weak duality, we have $\phi(x)=\min_{\lambda\in\mathbb{R}^{n_{x}},\rho\in\mathbb{R}_{+}}L_{a}(x,\lambda,\rho)$.
                        This completes the overall proof.
                \end{proof}

            \subsection{Proof of Corollary \ref{crl:ALR}}\label{apd:crl:ALR}
                \begin{proof}
                    According to Lemma \ref{lem:ALR}, for any $x\in\{0,1\}^{n_{x}}\cap X_{LF}$, there exists $\rho^*(x)$ and $\lambda^*(x)$ such that for all $\rho\geq\rho^*(x)$ and for all $\lambda$ that satisfies \eqref{eq:ALR-strong-condition}, we have $\phi(x)=L_{a}(x,\lambda,\rho)$.
                    We define $\hat{\rho}:=\max_{x\in\{0,1\}^{n_{x}}\cap X_{LF}}\rho^{*}(x)$.
                    For all $i\in[n_{x}]$, we define $U_{i}:=\max_{x\in\{0,1\}^{n_{x}}\cap X_{LF}}\lambda_{i}^{*}(x)$ and $L_{i}:=\min_{x\in\{0,1\}^{n_{x}}\cap X_{LF}}\lambda_{i}^{*}(x)$.
                    Then, for any $x\in\{0,1\}^{n_{x}}\cap X_{LF}$, any $\rho\geq\hat{\rho}$, and any $\lambda$ that satisfies \eqref{eq:ALR-strong-condition2}, it also holds that $\rho\geq\rho^*(x)$ and $\lambda$ satisfies \eqref{eq:ALR-strong-condition} and thus $\phi(x)=L_{a}(x,\lambda,\rho)$.
                \end{proof}

            \subsection{Proof of Proposition \ref{pps:ALR-cut}}\label{apd:pps:ALR-cut}
                \begin{proof}
                    Because $\hat{\lambda}(1-x)$ satisfies \eqref{eq:ALR-strong-condition2}, according to Corollary \ref{crl:ALR}, for any $x\in[0,1]^{n_{x}}\cap X_{LF}$, we have $\phi(x)=L_{a}(x,\hat{\lambda}(1-x),\hat{\rho})$.
                    In the VFR \eqref{eq:VFR}, the lower-level feasibility $X_{LF}$ has been satisfied by \eqref{eq:lower feasibility}, we can directly replace $\phi(x)$ in \eqref{eq:optimality} by $L_{a}(x,\hat{\lambda}(1-x),\hat{\rho})$ as
                    \begin{equation*}
                        d_{\ell}^{\top}y\geq L_{a}(x,\hat{\lambda}(1-x),\hat{\rho})=\max_{z\in[0,1]^{n_{x}}}\left\{\phi(z)-(\hat{\lambda}(1-x))^{\top}(x-z)-\hat{\rho}\left(1^{\top}x+1^{\top}z-2x^{\top}z\right)\right\},
                    \end{equation*}
                    which implies
                    \begin{equation*}
                        d_{\ell}^{\top}y\geq\phi(z)-(\hat{\lambda}(1-x))^{\top}(x-z)-\hat{\rho}\left(1^{\top}x+1^{\top}z-2x^{\top}z\right),~\forall z\in[0,1]^{n_{x}}.
                    \end{equation*}
                    Hence, for any $z\in[0,1]^{n_{x}}$, \eqref{eq:ALR-cut} is valid for \eqref{eq:optimality}.

                    For any $z\in\{0,1\}^{n_{x}}\subset[0,1]^{n_{x}}$, because $z$ is binary-valued, \eqref{eq:ALR-cut} can be reformulated as
                    \begin{equation*}
                        \begin{aligned}
                            d_{\ell}^{\top}y&\geq\phi(z)-(\hat{\lambda}(1-x))^{\top}(x-z)-\hat{\rho}\left(1^{\top}x+1^{\top}z-2x^{\top}z\right)\\
                            &=\phi(z)-\sum_{i\in[n_{x}]}[U_{i}x_{i}+L_{i}(1-x_{i})](x_{i}-z_{i})-\hat{\rho}\left(1^{\top}x+1^{\top}z-2x^{\top}z\right)\\
                            &=\phi(z)-\sum_{i\in[n_{x}]}[U_{i}(1-z_{i})+L_{i}z_{i}](x_{i}-z_{i})-\hat{\rho}\left(1^{\top}x+1^{\top}z-2x^{\top}z\right)\\
                            &=\phi(z)-(\hat{\lambda}(z))^{\top}(x-z)-\hat{\rho}\left(1^{\top}x+1^{\top}z-2x^{\top}z\right).
                        \end{aligned}
                    \end{equation*}
                    By fixing $x$ on the right-hand side as $z$, we obtain
                    \begin{equation*}
                        d_{\ell}^{\top}y\geq \phi(z),
                    \end{equation*}
                    which implies that \eqref{eq:ALR-cut2} is valid and tight for \eqref{eq:optimality}.
                    This completes the proof.
                \end{proof}

            \subsection{Proof of Corollary \ref{crl:representation}}\label{apd:crl:representation}
                \begin{proof}
                    We use the penalty-based valid inequality \eqref{eq:penalty-cut2} for instance and the case using valid inequalities \eqref{eq:LR-cut2} or \eqref{eq:ALR-cut2} can be similarly proved.

                    In the VFR \eqref{eq:VFR}, $x\in X_{LF}\subseteq\{0,1\}^{n_{x}}$ holds.
                    We first prove $\mathcal{F}_{1}=\mathcal{F}_{2}$, where
                    \begin{align*}
                        &\mathcal{F}_{1}:=\left\{
                            (x,y)\in X_{LF}\times Y:d_{\ell}^{\top}y\geq\phi(x)
                        \right\},\\
                        &\mathcal{F}_{2}:=\left\{
                            (x,y)\in X_{LF}\times Y:d_{\ell}^{\top}y\geq\phi(z)-\hat{\rho}\left(1^{\top}x+1^{\top}z-2x^{\top}z\right),~\forall z\in X_{LF}
                        \right\}.
                    \end{align*}
                    First, from Proposition \ref{pps:penalty-cut}, for any $z\in X_{LF}$, \eqref{eq:penalty-cut2} is valid for \eqref{eq:optimality}.
                    Hence, for any $(x,y)\in\mathcal{F}_{1}$, $(x,y)\in\mathcal{F}_{2}$ must holds.
                    Second, suppose an arbitrary $(\hat{x},\hat{y})\notin\mathcal{F}_{1}$, that is, $d_{\ell}^{\top}\hat{y}<\phi(\hat{x})$.
                    Obviously, $(\hat{x},\hat{y})$ violates the inequality \eqref{eq:penalty-cut2} when $z=\hat{x}$.
                    It means that for any $(x,y)\notin\mathcal{F}_{1}$, $(x,y)\notin\mathcal{F}_{2}$ must holds.
                    Therefore, $\mathcal{F}_{1}=\mathcal{F}_{2}$ and we complete this part of proof.

                    Because $X_{LF}\subseteq\{0,1\}^{n_{x}}$, we have $|X_{LF}|\leq2^{n_{x}}$ is finite.
                    Therefore, \eqref{eq:optimality} can be fully described by a finite number of inequalities \eqref{eq:penalty-cut2}.
                \end{proof}

            \subsection{Proof of Proposition \ref{pps:selection}}\label{apd:pps:selection}
                \begin{proof}\color{black}
                    According to Lemma \ref{lem:solution}, we have
                    \begin{equation*}
                        L(x, \lambda,\rho)=\max_{z\in\{0,1\}^{n_{x}}}~\bar{L}(x,\lambda,\rho,z).
                    \end{equation*}
                    To guarantee $\phi(x)=L(x, \lambda,\rho)$ for all $x\in\{0,1\}^{n_{x}}$, we require for all $x\in\{0,1\}^{n_{x}}$,
                    \begin{equation*}
                        x\in\arg\max_{z\in\{0,1\}^{n_{x}}}\bar{L}(x,\lambda,\rho,z).
                    \end{equation*}
                    It can be imposed by for all $x\in\{0,1\}^{n_{x}}$, for all $i\in[n_{x}]$,
                    \begin{equation}\label{eq:impose}
                        \bar{L}(x,\lambda,\rho,z_{-i},z_{i}=x_{i})\geq\bar{L}(x,\lambda,\rho,z_{-i},z_{i}=1-x_{i}), ~\forall z_{-i}\in\{0,1\}^{n_{x}-1}.
                    \end{equation}
                    Then, we analyze the condition for different valid inequalities.

                    (1) For the penalty-based relaxation \eqref{eq:penalty}, \eqref{eq:impose} implies for all $x\in\{0,1\}^{n_{x}}$, for all $i\in[n_{x}]$,
                    \begin{equation*}
                        \rho\geq\phi(z_{-i},z_{i}=1-x_{i})-\phi(z_{-i},z_{i}=x_{i}),~\forall z_{-i}\in\{0,1\}^{n_{x}-1},
                    \end{equation*}
                    which further implies
                    \begin{equation*}
                        \rho\geq\max_{\substack{z,z'\in\{0,1\}^{n_{x}}\\\|z-z'\|_{2}^{2}=1}}\phi(z)-\phi(z').
                    \end{equation*}
                    Hence, $\hat{\rho}$ can be selected as \eqref{eq:selection-penalty}

                    (2) For the Lagrangian-based relaxation \eqref{eq:LR}, \eqref{eq:impose} implies for all $x\in\{0,1\}^{n_{x}}$, for all $i\in[n_{x}]$,
                    \begin{equation*}
                        \lambda_{i}(2x_{i}-1)\geq\phi(z_{-i},z_{i}=1-x_{i})-\phi(z_{-i},z_{i}=x_{i}),~\forall z_{-i}\in\{0,1\}^{n_{x}-1}.
                    \end{equation*}
                    Considering that $x_{i}\in\{0,1\}$, we have for all $x\in\{0,1\}^{n_{x}}$, for all $i\in[n_{x}]$,
                    \begin{equation*}
                        \left\{\begin{aligned}
                            x_{i}=0\Rightarrow\lambda_{i}\leq\phi(z_{-i},z_{i}=0)-\phi(z_{-i},z_{i}=1),~\forall z_{-i}\in\{0,1\}^{n_{x}-1}\\
                            x_{i}=1\Rightarrow\lambda_{i}\geq\phi(z_{-i},z_{i}=0)-\phi(z_{-i},z_{i}=1),~\forall z_{-i}\in\{0,1\}^{n_{x}-1}
                        \end{aligned}\right.,
                    \end{equation*}
                    which further implies for all $x\in\{0,1\}^{n_{x}}$, for all $i\in[n_{x}]$,
                    \begin{equation*}
                        \left\{\begin{aligned}
                            x_{i}=0\Rightarrow\lambda_{i}\leq\min_{\substack{z,z'\in\{0,1\}^{n_{x}}\\z_{-i}=z'_{-i},z_{i}=0,z'_{i}=1}}\phi(z)-\phi(z')\\
                            x_{i}=1\Rightarrow\lambda_{i}\geq\max_{\substack{z,z'\in\{0,1\}^{n_{x}}\\z_{-i}=z'_{-i},z_{i}=0,z'_{i}=1}}\phi(z)-\phi(z')
                        \end{aligned}\right.
                    \end{equation*}
                    Hence, $U_{i}$ and $L_{i}$ can be selected as \eqref{eq:selection-LR}.

                    (3) For the augmented Lagrangian-based relaxation \eqref{eq:ALR}, \eqref{eq:impose} implies for all $x\in\{0,1\}^{n_{x}}$, for all $i\in[n_{x}]$,
                    \begin{equation*}
                        \lambda_{i}(2x_{i}-1)+\rho\geq\phi(z_{-i},z_{i}=1-x_{i})-\phi(z_{-i},z_{i}=x_{i}),~\forall z_{-i}\in\{0,1\}^{n_{x}-1}.
                    \end{equation*}
                    Considering that $x_{i}\in\{0,1\}$, we have for all $x\in\{0,1\}^{n_{x}}$, for all $i\in[n_{x}]$,
                    \begin{equation*}
                        \left\{\begin{aligned}
                            x_{i}=0\Rightarrow\lambda_{i}-\rho\leq\phi(z_{-i},z_{i}=0)-\phi(z_{-i},z_{i}=1),~\forall z_{-i}\in\{0,1\}^{n_{x}-1}\\
                            x_{i}=1\Rightarrow\lambda_{i}+\rho\geq\phi(z_{-i},z_{i}=0)-\phi(z_{-i},z_{i}=1),~\forall z_{-i}\in\{0,1\}^{n_{x}-1}
                        \end{aligned}\right.,
                    \end{equation*}
                    which further implies for all $x\in\{0,1\}^{n_{x}}$, for all $i\in[n_{x}]$,
                    \begin{equation*}
                        \left\{\begin{aligned}
                            x_{i}=0&\Rightarrow\lambda_{i}\leq\rho+\min_{\substack{z,z'\in\{0,1\}^{n_{x}}\\z_{-i}=z'_{-i},z_{i}=0,z'_{i}=1}}\phi(z)-\phi(z')\\
                            x_{i}=1&\Rightarrow\lambda_{i}\geq-\rho+\max_{\substack{z,z'\in\{0,1\}^{n_{x}}\\z_{-i}=z'_{-i},z_{i}=0,z'_{i}=1}}\phi(z)-\phi(z')
                        \end{aligned}\right.
                    \end{equation*}
                    Hence, $\hat{\rho}$, $U_{i}$, and $L_{i}$ can be selected as \eqref{eq:selection-ALR}
                \end{proof}

            \subsection{Proof of Proposition \ref{pps:selection-quick}}\label{apd:pps:selection-quick}
                \begin{proof}\color{black}
                    We combine the detailed formulation of $\phi$ and relax the calculation in Proposition \ref{pps:selection}.

                    (1) The right-hand side of \eqref{eq:selection-penalty} is relaxed as
                    \begin{equation*}
                        \begin{aligned}
                            &\max_{\substack{z,z'\in\{0,1\}^{n_{x}}\\\|z-z'\|_{2}^{2}=1}}\left\{
                                \max_{\substack{y\in Y\\B_{\ell}y\leq h_{\ell}-A_{\ell}z}}d_{\ell}^{\top}y-\max_{\substack{y'\in Y\\B_{\ell}y'\leq h_{\ell}-A_{\ell}z'}}d_{\ell}^{\top}y'
                            \right\}\\
                            \leq&\max_{\substack{z,z'\in\{0,1\}^{n_{x}}\\\|z-z'\|_{2}^{2}=1}}\left\{
                                \max_{\substack{y\in Y\\B_{\ell}y\leq h_{\ell}-A_{\ell}z}}d_{\ell}^{\top}y-\min_{\substack{y'\in Y\\B_{\ell}y'\leq h_{\ell}-A_{\ell}z'}}d_{\ell}^{\top}y'
                            \right\}
                            =\max_{\substack{z,z'\in\{0,1\}^{n_{x}}\\\|z-z'\|_{2}^{2}=1\\y,y'\in Y\\B_{\ell}y\leq h_{\ell}-A_{\ell}z\\B_{\ell}y'\leq h_{\ell}-A_{\ell}z'}}d_{\ell}^{\top}y-d_{\ell}^{\top}y'
                        \end{aligned}
                    \end{equation*}
                    Hence, $\hat{\rho}$ can be selected as \eqref{eq:selection-penalty-quick}.

                    (2) The right-hand side of the first equation in \eqref{eq:selection-LR} is relaxed as
                    \begin{equation*}
                        \begin{aligned}
                            &\min_{\substack{z,z'\in\{0,1\}^{n_{x}}\\z_{-i}=z'_{-i},z_{i}=0,z'_{i}=1}}\left\{
                                \max_{\substack{y\in Y\\B_{\ell}y\leq h_{\ell}-A_{\ell}z}}d_{\ell}^{\top}y-\max_{\substack{y'\in Y\\B_{\ell}y'\leq h_{\ell}-A_{\ell}z'}}d_{\ell}^{\top}y'
                            \right\}\\
                            \geq&\min_{\substack{z,z'\in\{0,1\}^{n_{x}}\\z_{-i}=z'_{-i},z_{i}=0,z'_{i}=1}}\left\{
                                \min_{\substack{y\in Y\\B_{\ell}y\leq h_{\ell}-A_{\ell}z}}d_{\ell}^{\top}y-\max_{\substack{y'\in Y\\B_{\ell}y'\leq h_{\ell}-A_{\ell}z'}}d_{\ell}^{\top}y'
                            \right\}
                            =\min_{\substack{z,z'\in\{0,1\}^{n_{x}}\\z_{-i}=z'_{-i},z_{i}=0,z'_{i}=1\\y,y'\in Y\\B_{\ell}y\leq h_{\ell}-A_{\ell}z\\B_{\ell}y'\leq h_{\ell}-A_{\ell}z'}}d_{\ell}^{\top}y-d_{\ell}^{\top}y'
                        \end{aligned}
                    \end{equation*}
                    The right-hand side of the second equation in \eqref{eq:selection-LR} is relaxed as
                    \begin{equation*}
                        \begin{aligned}
                            &\max_{\substack{z,z'\in\{0,1\}^{n_{x}}\\z_{-i}=z'_{-i},z_{i}=0,z'_{i}=1}}\left\{
                                \max_{\substack{y\in Y\\B_{\ell}y\leq h_{\ell}-A_{\ell}z}}d_{\ell}^{\top}y-\max_{\substack{y'\in Y\\B_{\ell}y'\leq h_{\ell}-A_{\ell}z'}}d_{\ell}^{\top}y'
                            \right\}\\
                            \leq&\max_{\substack{z,z'\in\{0,1\}^{n_{x}}\\z_{-i}=z'_{-i},z_{i}=0,z'_{i}=1}}\left\{
                                \max_{\substack{y\in Y\\B_{\ell}y\leq h_{\ell}-A_{\ell}z}}d_{\ell}^{\top}y-\min_{\substack{y'\in Y\\B_{\ell}y'\leq h_{\ell}-A_{\ell}z'}}d_{\ell}^{\top}y'
                            \right\}
                            =\max_{\substack{z,z'\in\{0,1\}^{n_{x}}\\z_{-i}=z'_{-i},z_{i}=0,z'_{i}=1\\y,y'\in Y\\B_{\ell}y\leq h_{\ell}-A_{\ell}z\\B_{\ell}y'\leq h_{\ell}-A_{\ell}z'}}d_{\ell}^{\top}y-d_{\ell}^{\top}y'
                        \end{aligned}
                    \end{equation*}
                    Hence, $U/L$ can be selected as \eqref{eq:selection-LR-quick}.
                \end{proof}

            \subsection{Proof of Proposition \ref{pps:selection-special}}\label{apd:pps:selection-special}
                \begin{proof}\color{black}
                    Comparing \eqref{eq:selection-penalty} and \eqref{eq:selection-LR}, we have
                    \begin{equation*}
                        \hat{\rho}=\max_{i\in[n_{x}]}\left\{\max\{-L_{i},U_{i}\}\right\}.
                    \end{equation*}
                    Then, we focus on the calculation of $L_{i}$ and $U_{i}$.

                    If $\phi(x)$ is submodular in $x\in\{0,1\}^{n_{x}}$, according to Definition \ref{def:property}, for any $x_{1}$, $x_{2}\in\{0,1\}^{n_{x}}$ with $x_{1}<x_{2}$, for any $i\in[n_{x}]$ with $x_{1}\wedge e_{i}=0$ and $x_{2}\wedge e_{i}=0$, we have
                    \begin{equation*}
                        \phi(x_{1})-\phi(x_{1}+e_{i})\leq\phi(x_{2})-\phi(x_{2}+e_{i}).
                    \end{equation*}
                    Hence, we simplify the calculation of \eqref{eq:selection-LR} by
                    \begin{equation*}
                        L_{i}=\min_{\substack{z,z'\in\{0,1\}^{n_{x}}\\z_{-i}=z'_{-i},z_{i}=0,z'_{i}=1}}\phi(z)-\phi(z')=\min_{\substack{z\in\{0,1\}^{n_{x}}\\z_{i}=0}}\phi(z)-\phi(z+e_{i})=\phi(0)-\phi(e_{i}),
                    \end{equation*}
                    \begin{equation*}
                        U_{i}=\max_{\substack{z,z'\in\{0,1\}^{n_{x}}\\z_{-i}=z'_{-i},z_{i}=0,z'_{i}=1}}\phi(z)-\phi(z')=\max_{\substack{z\in\{0,1\}^{n_{x}}\\z_{i}=0}}\phi(z)-\phi(z+e_{i})=\phi(1-e_{i})-\phi(1).
                    \end{equation*}

                    If $\phi(x)$ is supermodular in $x\in\{0,1\}^{n_{x}}$, according to Definition \ref{def:property}, for any $x_{1}$, $x_{2}\in\{0,1\}^{n_{x}}$ with $x_{1}<x_{2}$, for any $i\in[n_{x}]$ with $x_{1}\wedge e_{i}=0$ and $x_{2}\wedge e_{i}=0$, we have
                    \begin{equation*}
                        \phi(x_{1})-\phi(x_{1}+e_{i})\geq\phi(x_{2})-\phi(x_{2}+e_{i}).
                    \end{equation*}
                    Hence, we simplify the calculation of \eqref{eq:selection-LR} by
                    \begin{equation*}
                        L_{i}=\min_{\substack{z,z'\in\{0,1\}^{n_{x}}\\z_{-i}=z'_{-i},z_{i}=0,z'_{i}=1}}\phi(z)-\phi(z')=\min_{\substack{z\in\{0,1\}^{n_{x}}\\z_{i}=0}}\phi(z)-\phi(z+e_{i})=\phi(1-e_{i})-\phi(1),
                    \end{equation*}
                    \begin{equation*}
                        U_{i}=\max_{\substack{z,z'\in\{0,1\}^{n_{x}}\\z_{-i}=z'_{-i},z_{i}=0,z'_{i}=1}}\phi(z)-\phi(z')=\max_{\substack{z\in\{0,1\}^{n_{x}}\\z_{i}=0}}\phi(z)-\phi(z+e_{i})=\phi(0)-\phi(e_{i}).
                    \end{equation*}
                \end{proof}

        \vspace{-6ex}
        \section{Decision Rule-based Method}
        \vspace{-0ex}
            \subsection{Learning Framwork for Trained Decision Rule}\label{apd:learning}
                \begin{figure}[!b]
                    \begin{center}
                        \vspace{-2ex}
                        \includegraphics[width=0.70\columnwidth]{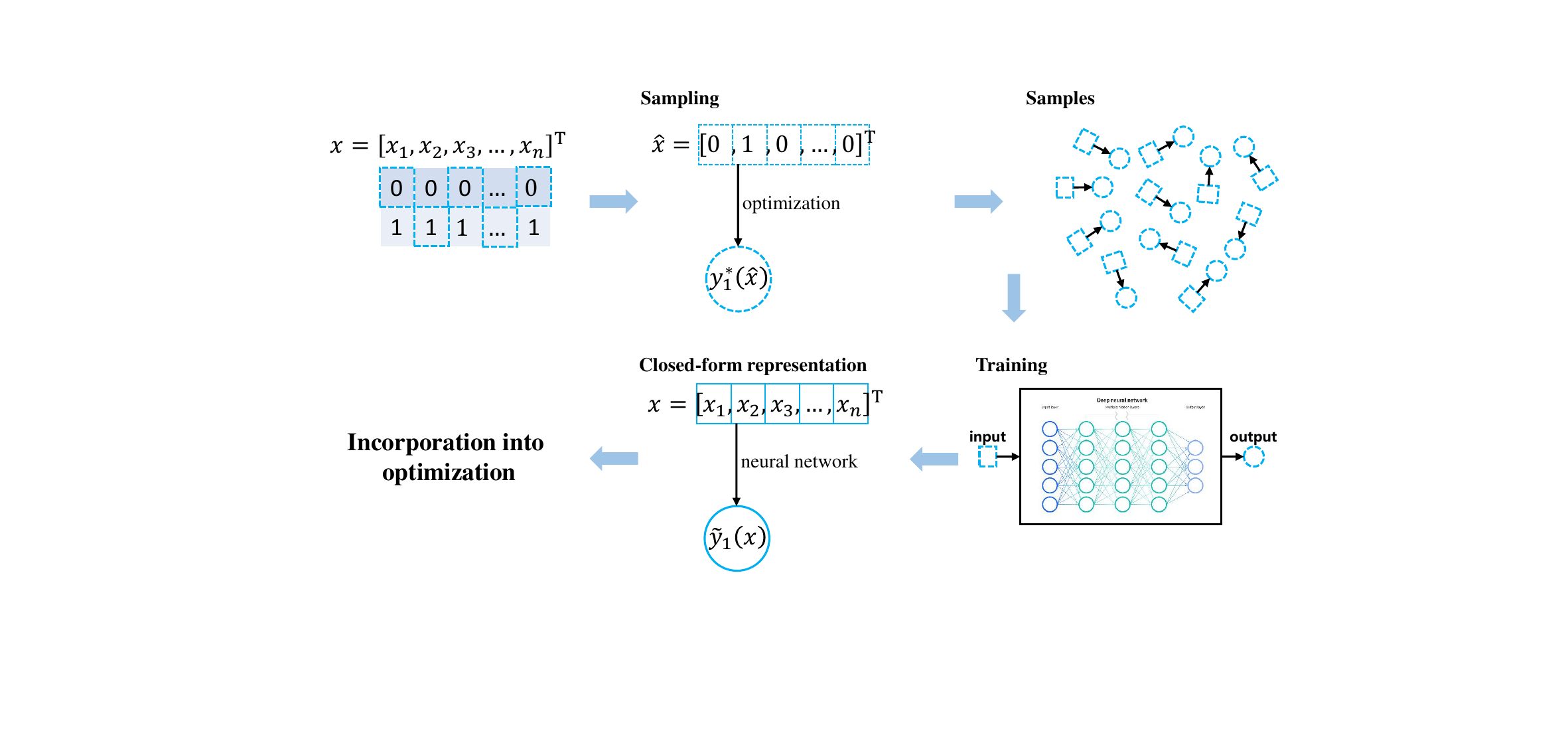}
                    \end{center}
                    \vspace{-2ex}
                    \caption{The overview of the process to get the trained decision rule in Section \ref{sec:DR}.}
                    \vspace{-1ex}
                    \label{fig:approximation}
                \end{figure}
                Figure \ref{fig:approximation} illustrates the adopted learning framework.
                We consider a set of the upper-level decision variables $\hat{x}$ and compute/record the lower-level optimal solution $y_{1}^*(\hat{x})$ to obtain sample-label pairs $(\hat{x},y_{1}^*(\hat{x}))$.
                After we have sampled or recorded sufficient sample-label pairs $(\hat{x},y_{1}^*(\hat{x}))$, we adopt supervised learning to train a neural network $\tilde{y}_{1}(x)$ to fit the mapping $y_{1}^*(x)$.
                More details can be found in our previous paper \cite{zhoulearning}.

            \subsection{Approximation of Sigmoid Activation}\label{apd:sigmoid}
                Figure \ref{fig:sigmoid} shows the comparison between $\sigma(x)$ and $\tilde{\sigma}_{p}(x)$.
                \begin{figure}[!htbp]
                    \begin{center}
                        \vspace{-0ex}
                        \includegraphics[width=0.5\columnwidth]{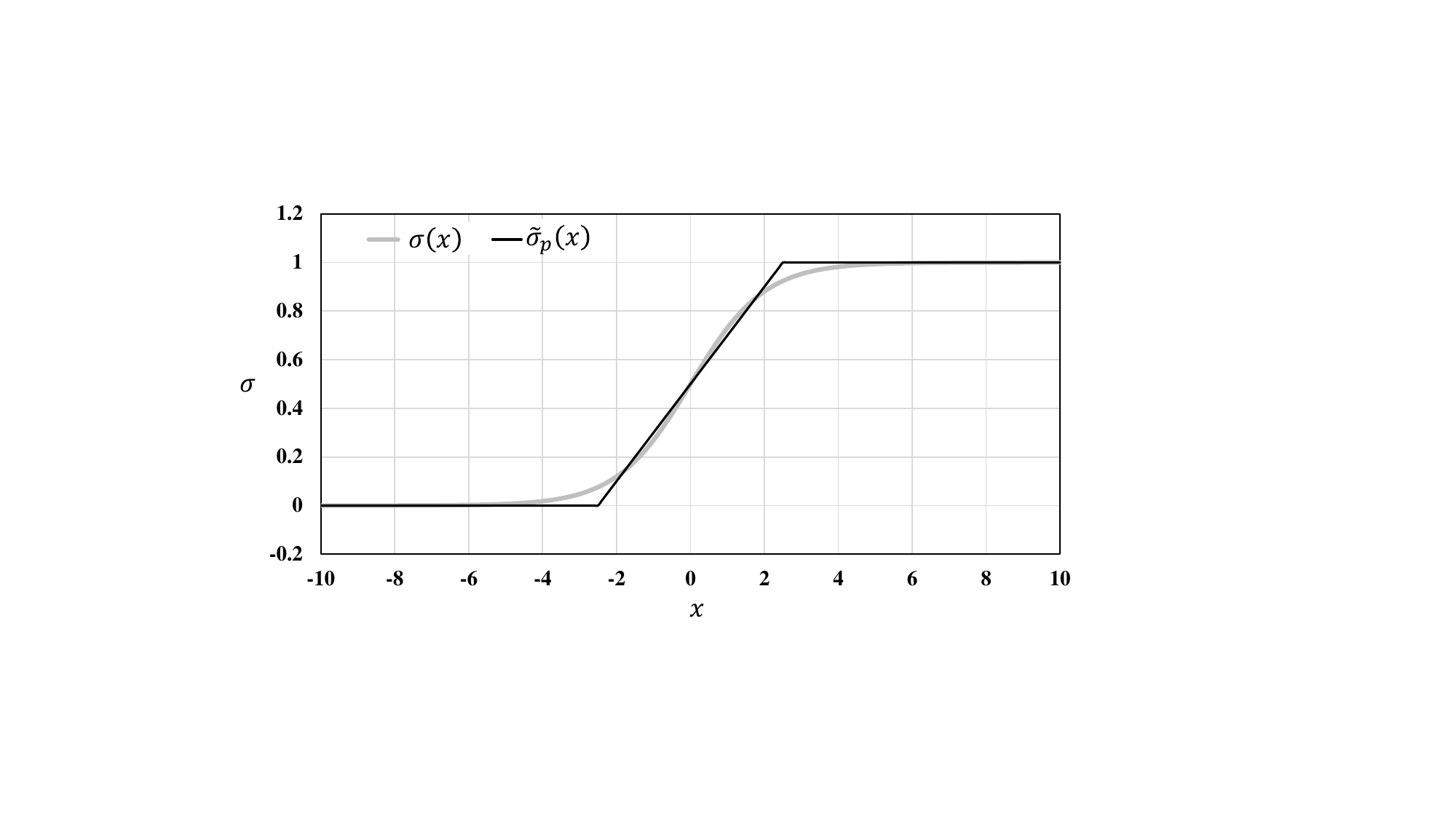}
                    \end{center}
                    \vspace{-2ex}
                    \caption{Comparison between $\sigma(x)$ and $\tilde{\sigma}_{p}(x)$.}
                    \vspace{-4ex}
                    \label{fig:sigmoid}
                \end{figure}

        \section{Instance Settings}
            \subsection{Instance Generation for General bilevel MILP}\label{apd:instance-general}
                We follow the instance generation rules of \cite{tahernejad2020branch} for general problems with the following formulatiuon:
                \begin{align*}
                    \min_{x,y}&~c_{u}^{\top}x+d_{u}^{\top}y\\
                    \textrm{s.t.}&~A_{u}x+B_{u}y\leq h_{u}\\
                    &~x\in\{0,1\}^{n_{x}}\\
                    &~\begin{aligned}
                            y\in\arg\max_{y}&~d_{\ell}^{\top}y\\
                            \textrm{s.t.}&~A_{\ell}x+B_{\ell}y\leq h_{\ell}\\
                            &~y_{I_{b}}\in\{0,1\}^{|I_{b}|}, y_{I_{c}}\in[0,1]^{|I_{c}|},
                        \end{aligned}
                \end{align*}
                where $I_{b}\subseteq[n_{y}]$ is the index set of binary variables in $y$ and $I_{c}=[n_{y}]\backslash I_{b}$ is the index set of continuous variables in $y$.
                In all generated instances, we set $n_{x}=n_{y}$ and set the number of constraints as $0.4n_{x}$ in the upper level and $0.4n_{y}$ in the lower level.
                We set the number of the binary variables in $y$ as $0.5n_{y}$, i.e., $|I_{b}|=0.5n_{y}$, and thus $|I_{c}|=n_{y}-|I_{b}|=0.5n_{y}$.
                The coefficients in $c_{u}$, $d_{u}$, and $d_{\ell}$ are uniformly distributed within [-50, 50].
                The coefficients in $A_{u}$, $B_{u}$, $A_{\ell}$, and $B_{\ell}$ are uniformly distributed within [0, 10].
                The coefficients in $h_{u}$ and $h_{\ell}$ are uniformly distributed within [30, 130] and [10, 110], respectively.

            \subsection{Instance Generation for Facility Location Interdiction Problems}\label{apd:instance-facility}
                For instances of facility location interdiction problems, we set $m=10n$, $B=\text{round}(n/3)$, and $C'=C\times B/n$.
                Each entry of demand $d$ is uniformly distributed within [0, 1].
                Each entry of original capacity $C$ is uniformly distributed within $[1/3, 1]\times m/n$ and satisfy the following three conditions to avoid trivial instances:
                i) the total demand is no more than the total original capacity,
                ii) without repair, the total demand may be more than the available capacity under interdiction,
                iii) with repair, the total demand is no more than the available capacity under worst-case interdiction.
                Each entry of price $c_{t}$ are uniformly distributed within [0, 1], $c_{p}=10$, and $c_{r,i}=2C'_{i}\max_{j\in[m]}\{c_{t,ij}\},\forall i\in[n]$.
    \end{appendices}

\end{document}